\newtheorem{rem}{Remark}[section]
\newtheorem{thm}{Theorem}[section]
\newtheorem{lem}{Lemma}[section]
\newtheorem{defn}{Definition}[section]
\newtheorem{prop}{Proposition}[section]
\newcommand{\bfx}{\boldsymbol{x}}
\newcommand{\hf}{\nicefrac{1}{2}}
\newcommand{\nrm}[1]{\left\| #1 \right\|}
\newcommand{\ciptwo}[2]{\left\langle #1 , #2 \right\rangle}
\newcommand{\cipgen}[3]{\left\langle #1 , #2 \right\rangle_{#3}}
\newcommand{\diff}{\mathrm{d}}
\newcommand{\dx}{\diff\bfx}
\newcommand{\dS}{\diff S}
\newcommand{\eipx}[2]{\left[ #1 , #2 \right]_{\rm x}}
\newcommand{\eipy}[2]{\left[ #1 , #2 \right]_{\rm y}}
\newcommand{\eipvec}[2]{\left[ #1 , #2 \right]}
\newcommand\be {\begin{equation}}
\newcommand\ee {\end{equation}}
	\title{A uniquely solvable and positivity-preserving finite difference scheme for the Flory-Huggins-Cahn-Hilliard equation with dynamical boundary condition}
	\date{\today}
\begin{document}
	
\author{
Yunzhuo Guo \thanks{School of Mathematical Sciences, Beijing Normal University, Beijing 100875, P.R. China (yunzguo@mail.bnu.edu.cn)}
\and
Cheng Wang\thanks{Department of Mathematics, The University of Massachusetts, North Dartmouth, MA  02747, USA (cwang1@umassd.edu)}
	\and
Steven M. Wise\thanks{Department of Mathematics, The University of Tennessee, Knoxville, TN 37996, USA (Corresponding Author: swise1@utk.edu)}
\and	
Zhengru Zhang\thanks{Laboratory of Mathematics and Complex Systems, Beijing Normal University, Beijing 100875, P.R. China (zrzhang@bnu.edu.cn)}
}

 	\maketitle
	\numberwithin{equation}{section}
	
	\begin{abstract}	 
In this paper we propose and analyze a finite difference numerical scheme for the Flory-Huggins-Cahn-Hilliard equation with dynamical boundary condition. The singular logarithmic potential is included in the Flory-Huggins energy expansion. Meanwhile, a dynamical evolution equation for the boundary profile corresponds to a lower-dimensional singular energy potential, coupled with a non-homogeneous boundary condition for the phase variable. In turn, a theoretical analysis for the coupled system becomes very challenging, since it contains nonlinear and singular energy potentials for both the interior region and on the boundary. In the numerical design, a convex splitting approach is applied to the chemical potential associated with the energy both at the interior region and on the boundary: implicit treatments for the singular and logarithmic terms, as well as the surface diffusion terms, combined with an explicit treatment for the concave expansive term. In addition, the discrete boundary condition for the phase variable is coupled with the evolutionary equation of the boundary profile. The resulting numerical system turns out to be highly nonlinear, singular and coupled. A careful finite difference approximation and convexity analysis reveals that such a numerical system could be represented as a minimization of a discrete numerical energy functional, which contains both the interior and boundary integrals. More importantly, all the singular terms correspond to a discrete convex functional. As a result, a unique solvability and positivity-preserving analysis could be theoretically justified, based on the subtle fact that the singular nature of the logarithmic terms around the singular limit values prevent the numerical solutions (at both the interior region and on the boundary section) reaching these values. The total energy stability analysis could be established by a careful estimate over the finite difference  inner product.  Some numerical results are presented in this article, which demonstrate the robustness of the proposed numerical scheme.

	\bigskip

\noindent
{\bf Key words and phrases}: 
Cahn-Hilliard equation, Flory-Huggins energy potential, dynamical boundary condition, positivity-preserving, total energy stability 

\noindent
{\bf AMS subject classification}: \, 35K35, 35K55, 49J40, 65M06, 65M12	
\end{abstract}
	


\section{Introduction} 
The Cahn-Hilliard (CH) equation~\cite{allen79, cahn58} is a fundamental model that describes phase separation processes in binary mixtures. Typically, this equation can be written as follows 
	\begin{align}
\phi_t & = \Delta \mu , 
 	\label{equation-CH-1} 
 	\\
\mu  &= \varepsilon^{-1} F' (\phi) - \varepsilon \Delta \phi, 
	\label{equation-CH-2}
	\end{align}
where $\phi$ is the required phase separation and $\mu$ is the chemical potential. $F$ is a double-well energy and $\varepsilon$ is an interface thickness parameter. Assuming homogeneous Neumann boundary conditions, as is standard, the free energy of the system, namely,
	\begin{equation}
E(\phi) = \int_\Omega \Big( \varepsilon^{-1} F (\phi) + \frac{\varepsilon}{2} | \nabla \phi |^2 \Big) \dx ,
	\label{energy-0} 
	\end{equation}
is dissipated along solution trajectories. Recently, this choice of boundary conditions has come under scrutiny, as it ignores the influence of the solid wall on internal dynamics. More importantly, the homogeneous Neumann boundary condition is known to be invalid in some important  applications, such as the contact line problem. Aiming to account for the possible short-range interactions of the solid wall, physicists have derived several types of dynamic boundary conditions by introducing surface free energy. We analyze a numerical scheme for a dynamic boundary conditions model proposed by Liu and Wu~\cite{LiuC2019} and derived using an energy-variational principle. In more details, the Liu-Wu model reads
	\begin{align} 
& \phi_t = \Delta \mu ,  \quad \mu = \varepsilon^{-1} F' (\phi) - \varepsilon \Delta \phi  ,\quad 
 \mbox{in $\Omega$} ,
	\label{equation-CHDBC-1} 
	\\
& \partial_{n} \mu = 0 ,  \quad \left.\phi\right|_\Gamma = \psi ,  \quad 
 \mbox{on $\partial \Omega = \Gamma$} ,  
	\label{equation-CHDBC-2}   
	\\
& \psi = \Delta_\Gamma \nu ,  \quad \nu = - \kappa \Delta_\Gamma \psi + \varepsilon^{-1} G' (\psi) + \varepsilon \partial_{n} \phi ,  \quad \mbox{on $\partial \Omega = \Gamma$} ,
	\label{equation-CHDBC-3}    
	\end{align}  
where $\Omega$ is a bounded domain with Lipschitz continuous boundary~$\Gamma:=\partial \Omega$; $\nu$ stands for the surface chemical potential; $\Delta_\Gamma$ is the Laplace-Beltrami operator on $\Gamma$; the constant~$\kappa$~is the surface diffuse interface thickness parameter; and $\varepsilon$ is an bulk diffuse interface thickness parameter. Effectively, periodic boundary conditions are employed for the surface diffusion problem. Owing to the no mass flux $\partial_n\mu = 0$ for the bulk problem and the periodic boundary conditions for the surface problem, one will observe that no mass is exchanged between the surface and bulk regions, which leads to respective mass conservation of $\phi$ in $\Omega$ and $\psi$ on $\Gamma$. Effectively, in the Liu-Wu model, the bulk boundary condition is given by a lower dimensional CH type equation that is coupled with the surface normal derivative term~$\partial_{n} \phi$.

The bulk and surface energies are given by
	\begin{equation} 
E_{\rm bulk} (\phi) = \int_\Omega \Big( \frac{\varepsilon}{2} | \nabla \phi |^2 + \varepsilon^{-1} F (\phi) \Big) \dx , \quad E_{\rm surf} (\psi) = \int_\Gamma \,\Big( \frac{\kappa}{2} | \nabla_\Gamma \psi |^2 + \varepsilon^{-1} G (\psi) \Big) \,  \dS . 
	\label{energy-CH-1}
	\end{equation} 
To close the model, suitable energy functions $F$~and~$G$~are needed in the system. We will focus on the Florry-Hugins logarithmic energy, which is also known as the regular solution model. In other words, we set $F(\phi) = R(\phi)$ and $G(\psi) = R(\psi)$, where
	\begin{equation}
R (c) = (1+ c) \ln (1+c) + (1-c) \ln (1-c) - \frac{\theta_0}{2} c^2 .
	\end{equation}
The regular solution model, which in many physical applications is considered more realistic than a polynomial double-well energy density, has the added benefit that it keeps the solutions ``positive." In other words, the solutions are expected to satisfy $-1<\phi,\psi<1$.

Mathematically, the unique existence of both weak and strong solutions has been proved in~\cite{LiuC2019}, and a different way to structure the weak solution was proposed recently \cite{Garcke20}. The dissipation rate can be calculated as follows. First, define
	\[
E_{\rm tot}(\phi) := E_{\rm bulk} (\phi)  + E_{\rm surf} (\psi).
	\]
Then, it follows that
	\begin{align}
\diff_t E_{\rm tot}(\phi) & = \int_\Omega\left\{\varepsilon \nabla\phi \cdot \nabla\partial_t\phi +\varepsilon^{-1}F'(\phi)\partial_t\phi   \right\} \dx \nonumber
	\\
& \quad + \int_\Gamma \left\{\kappa \nabla_\Gamma\psi \cdot \nabla_\Gamma\partial_t\psi +\varepsilon^{-1}G'(\psi)\partial_t\psi   \right\} \dS \nonumber
	\\
& = \int_\Omega\left\{-\varepsilon \Delta \phi  +\varepsilon^{-1}F'(\phi)  \right\} \partial_t\phi  \,\dx + \int_\Gamma \varepsilon\partial_n\phi \partial_t\phi \, \dS \nonumber
	\\
& \quad + \int_\Gamma \left\{-\kappa \Delta_\Gamma\psi   +\varepsilon^{-1}G'(\psi)  \right\} \partial_t\psi \,\dS \nonumber
	\\
& = \int_\Omega\left\{-\varepsilon \Delta \phi  +\varepsilon^{-1}F'(\phi)  \right\} \partial_t\phi  \,\dx 
	\\
& \quad + \int_\Gamma \left\{-\kappa \Delta_\Gamma\psi   +\varepsilon^{-1}G'(\psi) +\varepsilon\partial_n\phi \right\} \partial_t\psi \,\dS \nonumber
	\\
& = \int_\Omega\mu \partial_t\phi \,\dx + \int_\Gamma \nu \partial_t\psi \,\dS \nonumber
	\\
& = \int_\Omega\mu \Delta\mu  \,\dx + \int_\Gamma \nu \Delta_\Gamma\nu  \,\dS \nonumber
	\\
& = -\int_\Omega\nabla\mu\cdot \nabla\mu  \,\dx - \int_\Gamma \nabla_\Gamma \nu \cdot \nabla_\Gamma\nu  \,\dS. \nonumber
	\end{align}
Thus
	\begin{equation}
\diff_t E_{\rm tot} = - \| \nabla \mu \|^2 - \| \nabla_\Gamma \nu \|_\Gamma^2 \le 0 .
    \label{energy dissipation-3} 
	\end{equation}    
See the related references~\cite{Bao2021a, Bao2021b, Knopf2021, LiuC2019, Metzger2021} for more detailed discussions. 

There have been several papers that have analyzed the Cahn-Hilliard equation with Flory-Huggins energy potential~\cite{abels09b, abels07, barrett99, cahn1996, miranville11, elliott92a, debussche95, elliott96b, miranville12, miranville04};  all these works have focused on homogenous Neumann or periodic boundary conditions. If a dynamical boundary condition is involved, the coupled nature makes a theoretical analysis very challenging for the physical system, at both the theoretical and numerical levels. 

Some numerical efforts have been reported for the Cahn-Hilliard equation coupled with dynamical boundary condition, typically with polynomial energy potential. For example, a finite element numerical scheme was proposed in~\cite{Metzger2021}; an energy stability is proved, and a convergence to the weak solution is established. A reaction rate dependent dynamic boundary condition was considered in~\cite{Knopf2021}. Based on the stabilized linearly implicit approach, a first-order-in-time, linear and energy stable scheme was proposed in~\cite{Bao2021b}, and the semi-discrete error analysis was provided as well. A second order stabilized semi-implicit scheme was analyzed in \cite{Meng23}. In addition, a scalar auxiliary variable (SAV) approach was applied to the Cahn-Hilliard-Hele-Shaw system in~\cite{Yao2022}, with the energy stability analysis theoretically justified. 

Meanwhile, a direct extension of these numerical approaches to the Flory-Huggins-Cahn-Hilliard system~\eqref{equation-CHDBC-1} -- \eqref{equation-CHDBC-3}, with dynamical boundary condition, faces many serious difficulties. The singular nature of the logarithmic terms makes the positivity-preserving property (for both $1+ \phi$ and $1- \phi$) very important for the well-defined property of any numerical scheme. A coupled nature between the interior region and the boundary section makes a theoretical analysis even more challenging. In this article, we propose and analyze a numerical scheme for the Flory-Huggins-Cahn-Hilliard system~\eqref{equation-CHDBC-1} -- \eqref{equation-CHDBC-3},  with three theoretical properties justified: positivity-preserving, unique solvability, and unconditional stability for the total energy. 

The numerical approximation to the chemical potential profiles, at both the interior region and on the boundary section, is based on the convex-concave decomposition of the Flory-Huggins energy functional.  An implicit treatment of the nonlinear singular logarithmic term is applied to theoretically justify its positivity-preserving property; 
in fact, the singular and convex nature of the logarithmic term prevents the numerical solution reach the singular limit values~\cite{chen19b}, so that a point-wise positivity is preserved for the logarithmic argument variables. The linear expansive term is explicitly updated to ensure a unique solvability property, due to its concave nature. The surface diffusion term is implicitly treated, which comes from its convexity. Moreover, a numerical approximation to the non-homogeneous Neumann boundary condition for the phase variable turns out to be an essential part of the numerical design. To facilitate the theoretical analysis, we use a standard finite-difference approximation method. The discrete boundary condition for the phase variable, at the next time step, is coupled with the evolutionary equation of the boundary profile. 

The resulting numerical system is highly nonlinear, singular and coupled; both the interior and boundary numerical operators are involved. The unique solvability and positivity-preserving analysis for the proposed numerical scheme turns out to be highly challenging. By a careful convexity analysis over the proposed finite difference approximation, it is discovered that such a numerical system could be represented as a minimization of a discrete numerical energy functional. In more details, this numerical energy contains both the interior and boundary inner products over the associated grid points. Moreover, it is observed that all the singular terms correspond to a discrete convex functional. As a result, a unique solvability and positivity-preserving analysis could be theoretically justified, since the singular nature of the logarithmic terms around the singular limit values prevent the numerical solutions (at both the interior region and on the boundary section) reaching these values. 

The total energy stability of the numerical scheme is a direct consequence of a careful energy estimate, which gives a dissipation law for the discrete version of the total energy. The summation by parts formulas for the physical variables, both at the interior region and on the boundary section, will play an important role in the analysis.

For the rest of this article, the spatial discretization notations are recalled in Section~\ref{sec:numerical scheme}, and the fully discrete finite difference scheme is proposed. The unique solvability and positivity preserving analysis is established in Section~\ref{sec:positivity}.  The total energy stability estimate is provided in Section~\ref{sec:energy stability}. Some numerical results are presented in Section~\ref{sec:numerical results}. Finally, some concluding remarks are made in Section~\ref{sec:conclusion}.

	\section{Numerical scheme}
	\label{sec:numerical scheme}

	\subsection{Finite difference spatial discretization}
	\label{subsec:finite difference}
	
A semi-standard finite difference spatial approximation is applied. We present the numerical approximation on a two-dimensional computational domain $\Omega = (0,1)^2$. An extension to three-dimensional domain will be straightforward, and omitted for brevity. For further simplicity of presentation, we assume a periodic boundary condition in the $x$ direction and physical boundary conditions imposed at the top and bottom of the domain, namely, at 
	\begin{equation} 
\Gamma_{B} := \left\{ (x,y) \ \middle| \  0\le x\le 1, \  y=0\right\}  ,\quad \Gamma_{T}:= \left\{ (x,y) \ \middle| \ 0\le x\le 1,\  y=1 \right\}. 
	\label{boundary-Gamma-1} 
	\end{equation} 
The case of physical boundary conditions on all four boundary sections could be analyzed in a similar manner. In addition, a uniform spatial mesh size, $\Delta x = \Delta y = h = \frac{1}{N}$ with $N \in\mathbb{N}$, is assumed.
We define  
	\[
{\mathcal V}_{\mathrm{p},x}(\Omega) := \left\{ f_{i,j}  \  \middle| \ f_{i,j} = f_{i+\alpha N,j }, \ \forall \, i,\alpha \in \mathbb{Z}, \  j=0,\cdots, N \right\},
	\]
which is the set of grid functions with discrete periodic boundary conditions imposed in the $x$-direction. In particular, the subscripted symbols ${\mathrm{p},x}$ indicate throughout the paper that periodic boundary conditions are imposed in only the $x$-direction.  Herein the notation $f_{i,j}$ represents  the numerical value of $f\in {\mathcal V}_{\mathrm{p},x}(\Omega)$ at the real-space point $( p_i, p_j)\in \mathbb{R}^2$, where $p_i:=i\cdot h$. Analogously, we define, for any $m\in\{0,1,2,\ldots \}$, 
	\[
C_{\mathrm{p},x}^m(\Omega) := \left\{ f\in C^m( \mathbb{R}\times [0,1];\mathbb{R}) \ \middle| \ f(x,y) = f(x+\alpha,y), \ \forall \, \alpha\in\mathbb{Z}, \, \forall \,  x\in\mathbb{R}, \ \forall \, y\in[0,1] \right\}.
	\]
We can naturally define a projection operator $P_h:C_{\mathrm{p},x}^0(\Omega) \to {\mathcal V}_{\mathrm{p},x}(\Omega)$ via
	\[
P_h(f)_{i,j} = f(p_i,p_j), \quad \forall \, i\in\mathbb{Z},\quad  \forall \, j\in \{0, \ldots , N\}.
	\]

To accommodate Neumann boundary conditions in the $y$-direction in our numerical method, we need to add ghost layers at the top and bottom of the domain.	We define grid function space with these ghost layers via
	\[
{\mathcal V}_{\mathrm{p},x}^{+}(\Omega) := \left\{ f_{i,j}  \  \middle| \ f_{i,j} = f_{i+\alpha N,j }, \ \forall \, i,\alpha \in \mathbb{Z}, \  j=-1,\cdots, N+1 \right\}.
	\]
Analogously, we define the grid function spaces
	\[
{\mathcal E}_{\mathrm{p},x}(\Omega) := \left\{ f_{i+\hf,j}  \  \middle| \ f_{i+\hf,j} = f_{i+\hf+\alpha N,j }, \ \forall \, i,\alpha \in \mathbb{Z}, \  j=0,\cdots, N \right\},
	\]
	\[
{\mathcal N}_{\mathrm{p},x}(\Omega) := \left\{ f_{i,j+\hf}  \  \middle| \ f_{i,j+\hf} = f_{i+\alpha N,j +\hf}, \ \forall \, i,\alpha \in \mathbb{Z}, \  j=0,\cdots, N-1 \right\}
	\]
and
	\[
{\mathcal N}_{\mathrm{p},x}^{+}(\Omega) := \left\{ f_{i,j+\hf}  \  \middle| \ f_{i,j+\hf} = f_{i+\alpha N,j +\hf}, \ \forall \, i,\alpha \in \mathbb{Z}, \  j=-1,\cdots, N \right\}.
	\]
We now define the discrete average and difference operators $A_x,D_x: {\mathcal V}_{\mathrm{p},x}(\Omega) \to {\mathcal E}_{\mathrm{p},x}(\Omega)$ via 
	\begin{equation}
 A_x f_{i+\hf,j} := \frac{1}{2}\left(f_{i+1,j} + f_{i,j} \right), \quad D_x f_{i+\hf,j} := \frac{1}{h}\left(f_{i+1,j} - f_{i,j} \right),
 	\end{equation}
for any $f\in {\mathcal V}_{\mathrm{p},x}(\Omega)$. Likewise, we define the discrete average and difference operators $A_y,D_y: {\mathcal V}_{\mathrm{p},x}^{+}(\Omega) \to {\mathcal N}_{\mathrm{p},x}^{+}(\Omega)$ via 
	\begin{equation}
A_y f_{i,j+\hf} := \frac{1}{2}\left(f_{i,j+1} + f_{i,j} \right), \quad D_y f_{i,j+\hf} := \frac{1}{h}\left(f_{i,j+1} - f_{i,j} \right)  .  
	\end{equation} 
for any $f\in {\mathcal V}_{\mathrm{p},x}^{+}(\Omega)$. Analogously, we define the discrete average and difference operators $a_x,d_x: {\mathcal E}_{\mathrm{p},x}(\Omega) \to {\mathcal V}_{\mathrm{p},x}(\Omega)$ via 
	\begin{equation}
a_x f_{i, j} := \frac{1}{2}\left(f_{i+\hf, j} + f_{i-\hf, j} \right),	 \quad d_x f_{i, j} := \frac{1}{h}\left(f_{i+\hf, j} - f_{i-\hf, j} \right),
	\end{equation}
for all $f \in {\mathcal E}_{\mathrm{p},x}(\Omega)$, and we define the discrete average and difference operators $a_y,d_y: {\mathcal N}_{\mathrm{p},x}^{+}(\Omega) \to {\mathcal V}_{\mathrm{p},x}(\Omega)$ via
	\begin{equation}
a_y g_{i,j} := \frac{1}{2}\left(g_{i,j+\hf} + g_{i,j-\hf} \right),	 \quad d_y g_{i,j} := \frac{1}{h}\left(g_{i,j+\hf} - g_{i,j-\hf} \right) ,
	\end{equation}
for all $g \in {\mathcal N}_{\mathrm{p},x}^{+}(\Omega)$. 

For a scalar grid function $g\in {\mathcal V}_{\mathrm{p},x}^{+}(\Omega)$ and a vector function $\vec{f} = ( f^x , f^y)^T$, with $f^x\in {\mathcal E}_{\mathrm{p},x}(\Omega) $ and $f^y\in {\mathcal N}_{\mathrm{p},x}^+(\Omega)$, the discrete divergence is defined as 
	\begin{equation} 
\nabla_h\cdot \big( g \vec{f} \big)_{i,j} = d_x\left( A_x g \, f^x\right)_{i,j}  + d_y\left( A_y g \, f^y\right)_{i,j}  .  
	\label{divergence-1} 
	\end{equation} 
In this case, observe that $\nabla_h\cdot \big( g \vec{f} \big) \in {\mathcal V}_{\mathrm{p},x}(\Omega)$.  Now, suppose that $\phi \in {\mathcal V}_{\mathrm{p},x}^{+}(\Omega)$. Then, naturally, $\nabla_h \phi := ( D_x \phi , D_y \phi)^T$ has $D_x \phi\in {\mathcal E}_{\mathrm{p},x}(\Omega)$ and $D_y \phi\in {\mathcal N}_{\mathrm{p},x}^{+}(\Omega)$. Suppose that $g\in {\mathcal V}_{\mathrm{p},x}^{+}(\Omega)$. Then, we can also define, as above,
	\begin{equation} 
\nabla_h\cdot \big( g \nabla_h \phi \big)_{i,j} =  d_x\left( A_x g \, D_x \phi \right)_{i,j}  + d_y\left( A_y g \, D_y \phi \right)_{i,j} ,
	\label{divergence-2}  
	\end{equation}
where $\nabla_h\cdot \big( g \nabla_h \phi \big) \in {\mathcal V}_{\mathrm{p},x}(\Omega)$. If $g\equiv 1$, then
	\begin{equation}
\Delta_h \phi _{i,j} =  \nabla_h\cdot \big( \nabla_h \phi \big)_{i,j} = d_x\left( D_x \phi \right)_{i,j}  + d_y\left( D_y \phi \right)_{i,j} ,    
	\label{divergence-3}  
	\end{equation} 
where $\Delta \phi \in {\mathcal V}_{\mathrm{p},x}(\Omega)$, which is the usual 5-point stencil.

For two grid functions $f, g \in {\mathcal V}_{\mathrm{p},x}(\Omega)$, the discrete $L^2$ inner product and the associated norm are defined as 
	\begin{equation*}
 \left\langle f , g\right\rangle_\Omega:= h^2 \sum_{i=0}^{N-1} \sum_{j=0}^N \, w_j f_{i,j} g_{i,j} , \quad w_j = \left\{ 
 	\begin{array}{l} 
1 , \, \, \, 1 \le j \le N-1 , 
   \\ 
\frac12 , \, \, \, j=0, N , 
	\end{array} 
\right.  \quad  \| f \|_{2,\Omega} := \sqrt{ \langle f , f \rangle_\Omega\, } . 
	\end{equation*} 
The mean zero space grid function space is defined as 
	\[
\mathring{\mathcal V}_{\mathrm{p},x}(\Omega):=\left\{ f \in {\mathcal V}_{\mathrm{p},x}(\Omega) \ \middle| \ 0 = \overline{f} :=  \frac{1}{| \Omega|} \langle f , {\bf 1} \rangle_\Omega\right\},
	\]
where $|\Omega| = 1$ is the area of $\Omega$. Similarly, for two vector grid functions $\vec{f} = ( f^x , f^y)^T$ and  $\vec{g} = ( g^x , g^y )^T$, with $f^x, g^x \in {\mathcal E}_{\mathrm{p},x}(\Omega)$ and $f^y, g^y\in {\mathcal N}_{\mathrm{p},x}(\Omega)$, the corresponding discrete inner product is defined as 
	\begin{equation*}
\eipvec{\vec{f} }{\vec{g} } : = \eipx{f^x}{g^x}	+ \eipy{g^y}{g^y} ,  \quad \eipx{f^x}{g^x} := \ciptwo{a_x (f^x g^x)}{1} , \quad  \eipy{f^y}{g^y} :=  h^2 \sum_{i,j=0}^{N-1} \, f^y_{i,j+\hf} g^y_{i,j+\hf}  . 
	\end{equation*}	
In addition to the discrete $\| \, \cdot \, \|_{2,\Omega}$ norm, the discrete maximum norm is defined as  
	\[
\nrm{f}_\infty := \max_{\substack{0\le i\le N-1 \\ 0\le j \le N}}\left| f_{i,j}\right|,
	\]
for all $f\in {\mathcal V}_{\mathrm{p},x}(\Omega)$. Discrete $H_h^1$ and $H_h^2$ norms are introduced as for $f \in{\mathcal V}_{\mathrm{p},x}^{+}(\Omega)$,
	\begin{eqnarray*} 
&& \nrm{ \nabla_h f}_2^2 : = \eipvec{\nabla_h f }{ \nabla_h f } = \eipx{D_x f}{D_x f} + \eipy{D_y f}{D_y f} ,
	\\
&& \nrm{f}_{H_h^1}^2 : =  \nrm{f}_2^2+ \nrm{ \nabla_h f}_2^2 ,  \quad 
  \| f \|_{H_h^2}^2 :=  \| f \|_{H_h^1}^2 + \| \Delta_h f \|_{2,\Omega}^2 .  
	\end{eqnarray*}

We need to compute finite differences with respect to the second variable, that is, $y$ or $j$, of grid functions $f\in {\mathcal V}_{\mathrm{p},x}^{+}(\Omega)$, and have these be evaluated at the physical  boundaries $\Gamma_{T}$ and $\Gamma_{B}$. To do this, we define the following operators for any $f\in {\mathcal V}_{\mathrm{p},x}^{+}(\Omega)$
	\[
\tilde{D}_y f_{i,0} := a_y(D_y f)_{i,0} = \frac{f_{i,1}-f_{i,-1}}{2h}	
	\]	
and
	\[
\tilde{D}_y f_{i,N} := a_y(D_y f)_{i,N} = \frac{f_{i,N+1}-f_{i,N-1}}{2h} .	
	\]
Of course, we can extend the definitions to be valid at any grid points, but we only need these differences at the boundary. For instance, we enforce homogeneous Neumann boundary conditions as follows: for any $f\in {\mathcal V}_{\mathrm{p},x}^{+}(\Omega)$
	\[
0 = \tilde{D}_y f_{i,0} = \frac{f_{i,1}-f_{i,-1}}{2h} \quad \iff \quad f_{i, - 1}= f_{i,1}.
	\]
and
	\[
0 =  \tilde{D}_y f_{i,N} = \frac{f_{i,N+1}-f_{i,N-1}}{2h} \quad \iff \quad f_{i,N+1}= f_{i,N-1}.
	\]

On the top and bottom boundary sections $\Gamma_{T}$, $\Gamma_{T}$, the following one-dimensional periodic grid function spaces are utilized
	\[
{\mathcal V}_{\mathrm{p},x}(\Gamma_T) = {\mathcal V}_{\mathrm{p},x}(\Gamma_B) = {\mathcal V}_{\mathrm{p},x}(\Gamma) := \left\{ \varphi_{i}  \ \middle| \ \varphi_{i} = \varphi_{i+\alpha N}, \ \forall \, \alpha,i \in \mathbb{Z} \right\}.	
	\]
The following inner product and norm are introduced: for any $f, g \in {\mathcal V}_{\mathrm{p},x}(\Gamma)$: 
	\begin{equation*}
\left\langle f , g\right\rangle_\Gamma := h \sum_{i=0}^{N-1} f_{i} g_{i} , \quad \nrm{ f }_{2, \Gamma} := \sqrt{ \left\langle f , f\right\rangle_\Gamma}  . 
	\end{equation*} 
We define the operator $\Delta_h^x:{\mathcal V}_{\mathrm{p},x}(\Gamma) \to {\mathcal V}_{\mathrm{p},x}(\Gamma)$ via
	\[
\Delta_h^x f_i := \frac{f_{i+1}-2f_i+f_{i-1}}{h^2} .
	\]
The operators $A_x,D_x$ act in a natural way on the space ${\mathcal V}_{\mathrm{p},x}(\Gamma)$ and we suppress the formulas.

The following summation-by-parts formulae can be proven easily using the techniques found in~\cite{guo16, wang11a, wise10, wise09a} and elsewhere.

	\begin{lem}  
	\label{lemma1} 	
For any $\psi, \phi, g \in {\mathcal V}_{\mathrm{p},x}^{+}(\Omega)$, and any $\vec{f} = (f^x, f^y)^T$, with $f^x\in \mathcal{E}_{\mathrm{p},x}(\Omega)$ and $f^y\in\mathcal{N}_{\mathrm{p},x}^+(\Omega)$, the following summation by parts formulas are valid: 
	\begin{align} 
\ciptwo{\psi}{\nabla_h\cdot\vec{f}} & =  - \eipvec{\nabla_h \psi}{ \vec{f}} + h \sum_{i=0}^{N-1} \Big( \frac12 ( f_{i,N+\hf} + f_{i, N-\hf} ) \psi_{i,N} - \frac12 ( f_{i,\hf} + f_{i,-\hf} ) \psi_{i,0}  \Big) ,  
	\nonumber
	\\
& =  - \eipvec{\nabla_h \psi}{ \vec{f}} + \left\langle a_yf_{\star , N} , \psi_{\star , N} \right\rangle_{\Gamma} - \left\langle a_yf_{\star , 0} , \psi_{\star , 0} \right\rangle_{\Gamma}
	\label{lemma 1-0-1}  
	\\
\ciptwo{\psi}{\nabla_h\cdot \left( g \nabla_h \phi \right)}  & =  -  \eipvec{\nabla_h \psi }{ {\cal A}_h g \nabla_h\phi} + h \sum_{i=0}^{N-1}  \frac12 ( ( A_y g D_y \phi)_{i,N+\hf} + ( A_y g D_y \phi)_{i, N-\hf} ) \psi_{i,N}   
	\nonumber 
	\\
& \quad - h \sum_{i=0}^{N-1}  \frac12 ( ( A_y g D_y \phi)_{i,\hf}  + ( A_y g D_y \phi)_{i,-\hf} ) \psi_{i,0}
	\nonumber
	\\
& =  -  \eipvec{\nabla_h \psi }{ {\cal A}_h g \nabla_h\phi} +  \left\langle a_y(A_yg D_y\phi)_{\star , N},\psi_{\star , N} \right\rangle_\Gamma  -  \left\langle a_y(A_yg D_y\phi)_{\star , 0},\psi_{\star , 0} \right\rangle_\Gamma ,
	\label{lemma 1-0-2}
	\end{align}
where we use the notation
	\[
\eipvec{\nabla_h \psi }{ {\cal A}_h g \nabla_h\phi} := \eipx{D_x\psi}{A_x g D_x\phi} + \eipy{D_y\psi}{A_y gD_y\phi}.
	\]
In particular, if $g \equiv 1$, the following identity is valid: 
	\begin{align} 
\ciptwo{\psi}{\Delta_h \phi} & =  -  \eipvec{\nabla_h \psi }{ \nabla_h\phi} +  \left\langle a_y(D_y\phi)_{\star , N},\psi_{\star , N} \right\rangle_\Gamma  -  \left\langle a_y(D_y\phi)_{\star , 0},\psi_{\star , 0} \right\rangle_\Gamma \nonumber
	\\
& = -  \eipvec{\nabla_h \psi }{ \nabla_h\phi} +  \left\langle \tilde{D}_y\phi_{\star , N},\psi_{\star , N} \right\rangle_\Gamma  -  \left\langle \tilde{D}_y\phi_{\star , 0} , \psi_{\star , 0} \right\rangle_\Gamma .
	\label{lemma 1-0-3} 
	\end{align}
	\end{lem}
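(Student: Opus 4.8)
The plan is to prove the foundational identity \eqref{lemma 1-0-1} first, by direct discrete summation by parts carried out separately in the two coordinate directions, and then to obtain \eqref{lemma 1-0-2} and \eqref{lemma 1-0-3} as immediate specializations. I would begin by expanding the divergence inside the inner product, writing $\langle \psi , \nabla_h\cdot\vec{f}\rangle_\Omega = h^2 \sum_{i=0}^{N-1}\sum_{j=0}^N w_j \psi_{i,j} \big( (d_x f^x)_{i,j} + (d_y f^y)_{i,j}\big)$, and splitting the right-hand side into an $x$-contribution and a $y$-contribution, which can be treated independently.

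For the $x$-contribution, the key feature is the discrete periodicity $f_{i,j}=f_{i+\alpha N,j}$. After a shift of the summation index over the complete period $i=0,\ldots,N-1$, the Abel rearrangement of $\sum_i \psi_{i,j}(d_x f^x)_{i,j}$ telescopes with no surviving endpoint contributions, producing exactly $-\eipx{D_x\psi}{f^x}$ for each fixed $j$ and hence $-\eipx{D_x\psi}{f^x}$ overall. No boundary term in $x$ appears, which is precisely why the final formula carries $\Gamma$-contributions only at $j=0$ and $j=N$.

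The $y$-contribution is the heart of the argument, and also the main obstacle, because the weights $w_0=w_N=\hf$ differ from the interior weight $w_j=1$. Fixing $i$, I would apply summation by parts to $\sum_{j=0}^N w_j \psi_{i,j}(f^y_{i,j+\hf}-f^y_{i,j-\hf})$, collecting the coefficient that multiplies each half-integer value $f^y_{i,k+\hf}$. Comparing with the clean interior sum $\sum_{k=0}^{N-1}(\psi_{i,k}-\psi_{i,k+1})f^y_{i,k+\hf} = -h\sum_{k=0}^{N-1}(D_y\psi)_{i,k+\hf}\,f^y_{i,k+\hf}$, the mismatch is concentrated entirely at the four endpoint locations $-\hf,\hf,N-\hf,N+\hf$, and evaluates to $-\hf\psi_{i,0}\big(f^y_{i,-\hf}+f^y_{i,\hf}\big)+\hf\psi_{i,N}\big(f^y_{i,N-\hf}+f^y_{i,N+\hf}\big)$. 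It is exactly the half-weights $w_0=w_N=\hf$ that make these corrections assemble into the symmetric averages $a_y f^y_{i,0}$ and $a_y f^y_{i,N}$ rather than one-sided values; summing over $i$ against the outer factor $h$ then yields $\langle a_y f_{\star,N},\psi_{\star,N}\rangle_\Gamma-\langle a_y f_{\star,0},\psi_{\star,0}\rangle_\Gamma$, and the interior part gives $-\eipy{D_y\psi}{f^y}$. Combining the two directions produces both the explicit half-sum form and the compact $a_y$ form of \eqref{lemma 1-0-1}.

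Finally, I would derive the remaining identities by substitution. Formula \eqref{lemma 1-0-2} follows by applying \eqref{lemma 1-0-1} to the vector field with components $f^x = A_x g\, D_x\phi$ and $f^y = A_y g\, D_y\phi$, since by \eqref{divergence-2} one has $\nabla_h\cdot(g\nabla_h\phi) = d_x(A_x g\, D_x\phi)+d_y(A_y g\, D_y\phi)$, and the interior terms combine into $\eipvec{\nabla_h\psi}{\mathcal{A}_h g\nabla_h\phi}$ by its definition. Formula \eqref{lemma 1-0-3} is then the special case $g\equiv 1$, using $A_y g\, D_y\phi = D_y\phi$ together with the identification $a_y(D_y\phi)_{i,0}=\tilde{D}_y\phi_{i,0}$ and $a_y(D_y\phi)_{i,N}=\tilde{D}_y\phi_{i,N}$ from the definition of $\tilde{D}_y$. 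I expect the only genuinely delicate point throughout to be the weight bookkeeping in the $y$-direction described above; everything else is routine index manipulation under the periodic identification.
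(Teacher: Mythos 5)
Your proof is correct. The paper itself does not write out a proof of Lemma~\ref{lemma1} (it only cites standard finite difference references), and your argument is exactly the standard computation being invoked: periodic telescoping in the $x$-direction with no boundary remainder, and in the $y$-direction the crucial bookkeeping showing that the half-weights $w_0=w_N=\frac12$ in the discrete inner product make the endpoint corrections at $-\frac12,\frac12,N-\frac12,N+\frac12$ assemble into the symmetric averages $a_y f_{\star,0}$ and $a_y f_{\star,N}$, after which \eqref{lemma 1-0-2} and \eqref{lemma 1-0-3} follow by direct substitution of $f^x=A_xg\,D_x\phi$, $f^y=A_yg\,D_y\phi$ and $g\equiv 1$.
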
 
	
We now need to define an important positive, linear operator that will be used in our analysis. First observe the following: for any $\varphi \in\mathring{\mathcal V}_{\mathrm{p},x}(\Omega)$, there is a unique solution $\psi \in \mathring{\mathcal V}_{\mathrm{p},x}^+(\Omega)$ to the problem
	\[
-\Delta_h \psi = \varphi,	
	\]
subject to the boundary conditions
	\[
0 = \tilde{D}_y \psi_{i,0} = \frac{\psi_{i,1}-\psi_{i,-1}}{2h} \quad \iff \quad \psi_{i, - 1}= \psi_{i,1}, \quad 0\le i\le N-1,
	\]
and
	\[
0 =  \tilde{D}_y \psi_{i,N} = \frac{\psi_{i,N+1}-\psi_{i,N-1}}{2h} \quad \iff \quad \psi_{i,N+1}= \psi_{i,N-1}, \quad 0\le i\le N-1.
	\]
The solution operator for this problem defines a one-to-one onto mapping from $\mathring{\mathcal V}_{\mathrm{p},x}(\Omega)$ to $\mathring{\mathcal V}_{\mathrm{p},x}(\Omega)$. We will let $L_h: \mathring{\mathcal V}_{\mathrm{p},x}(\Omega) \to \mathring{\mathcal V}_{\mathrm{p},x}(\Omega)$ represent be the (one-to-one, onto) forward operator in the problem, incorporating the boundary conditions into the definition of the operator. The solution operator is, of course, the inverse, denoted $L_h^{-1}: \mathring{\mathcal V}_{\mathrm{p},x}(\Omega) \to \mathring{\mathcal V}_{\mathrm{p},x}(\Omega)$.

In essence, the forward operator $L_h$ is just negative  the discrete laplacian operator, except near the top and bottom physical boundaries, where the definition of the operator differs from the negative discrete laplacian in order to incorporate the discrete homogeneous Neumann boundary conditions. Specifically, for $\psi\in \mathcal V_{\mathrm{p},x}(\Omega)$, we have
	\begin{equation}
L_h \psi_{i,j} = 
	\begin{cases}
-\Delta_h^x \psi_{i,0} - 2\frac{\psi_{i,1} - \psi_{0,i}}{h^2}, & j = 0,
	\\
-\Delta_h^x \psi_{i,N} - 2\frac{\psi_{i,N-1}-\psi_{N,i}}{h^2}, & j = N,
	\\
-\Delta_h \psi_{i,j}, & \mbox{otherwise}.
	\end{cases}
	\end{equation}
Another interpretation for our definition is that we have removed the need for the ghost layer by incorporating the boundary conditions. We have the following result.
	\begin{prop} 
The operator $L_h: \mathring{\mathcal V}_{\mathrm{p},x}(\Omega) \to \mathring{\mathcal V}_{\mathrm{p},x}(\Omega)$ defined above is positive and symmetric in the sense that
	\[
\cipgen{\psi}{L_h\phi}{\Omega} = \cipgen{L_h\psi}{\phi}{\Omega}, \quad \forall \ \psi,\phi\in \mathring{\mathcal V}_{\mathrm{p},x}(\Omega)
	\]
and
	\[
\cipgen{\psi}{L_h\psi}{\Omega} >0 , \quad \forall \ \psi \in \mathring{\mathcal V}_{\mathrm{p},x}(\Omega), \quad \psi\not\equiv 0.
	\]		
	\end{prop}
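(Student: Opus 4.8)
The plan is to reduce both claims to the summation-by-parts identity \eqref{lemma 1-0-3} of Lemma~\ref{lemma1}, using the fact that the boundary corrections built into $L_h$ are exactly those produced by the homogeneous Neumann extension of $-\Delta_h$. First I would take arbitrary $\psi, \phi \in \mathring{\mathcal V}_{\mathrm{p},x}(\Omega)$ and extend them to $\mathring{\mathcal V}_{\mathrm{p},x}^+(\Omega)$ by assigning ghost values through the discrete Neumann conditions $\psi_{i,-1}=\psi_{i,1}$, $\psi_{i,N+1}=\psi_{i,N-1}$ (and likewise for $\phi$), so that $\tilde D_y\psi_{\star,0}=\tilde D_y\psi_{\star,N}=0$, and similarly for $\phi$. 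A direct computation of $d_y(D_y\,\cdot\,)$ at $j=0$ and $j=N$ with these ghost values then confirms that, on this extension, $L_h\phi=-\Delta_h\phi$ at every grid row $0\le j\le N$; this identification is what legitimizes applying the interior identity along the boundary rows.

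With this in hand I would invoke \eqref{lemma 1-0-3}. Since the extended $\phi$ satisfies the homogeneous Neumann condition, the two boundary sums, each of which carries the factor $\tilde D_y\phi_{\star,0}$ or $\tilde D_y\phi_{\star,N}$, vanish identically, leaving the clean gradient form
\[
\cipgen{\psi}{L_h\phi}{\Omega} = -\cipgen{\psi}{\Delta_h\phi}{\Omega} = \eipvec{\nabla_h\psi}{\nabla_h\phi}.
\]
Symmetry is then immediate: the discrete vector inner product $\eipvec{\cdot}{\cdot}$ is symmetric in its two arguments (each of its $\rm x$- and $\rm y$-components is a symmetric sum), so
\[
\cipgen{\psi}{L_h\phi}{\Omega} = \eipvec{\nabla_h\psi}{\nabla_h\phi} = \eipvec{\nabla_h\phi}{\nabla_h\psi} = \cipgen{\phi}{L_h\psi}{\Omega} = \cipgen{L_h\psi}{\phi}{\Omega}.
\]

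For positivity I would set $\phi=\psi$ in the gradient form to get $\cipgen{\psi}{L_h\psi}{\Omega}=\eipvec{\nabla_h\psi}{\nabla_h\psi}=\nrm{\nabla_h\psi}_2^2\ge 0$. Equality forces $D_x\psi\equiv 0$ and $D_y\psi\equiv 0$, so $\psi$ is constant over the grid (periodic in $x$, Neumann in $y$); the mean-zero constraint defining $\mathring{\mathcal V}_{\mathrm{p},x}(\Omega)$ then pins that constant to zero, giving $\cipgen{\psi}{L_h\psi}{\Omega}>0$ whenever $\psi\not\equiv 0$.

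The one delicate point, which I would write most carefully, is the boundary-row identification $L_h=-\Delta_h$ on the Neumann extension: one must check that the modified stencil $-\Delta_h^x\psi_{i,0}-2(\psi_{i,1}-\psi_{i,0})/h^2$ (and its $j=N$ analogue) is exactly what $-\Delta_h$ returns once the ghost values are folded in, and that the boundary terms in \eqref{lemma 1-0-3} involve only the normal derivative of the \emph{second} argument $\phi$, so that the Neumann condition on $\phi$ alone annihilates them. Everything else follows routinely from Lemma~\ref{lemma1}.
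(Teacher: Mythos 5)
Your proof is correct, and it is precisely the argument the paper's machinery is set up to support: the paper states this proposition without writing out a proof, and the intended route is exactly yours — check that the modified boundary stencil of $L_h$ coincides with $-\Delta_h$ once the ghost values are assigned by the discrete Neumann conditions, apply the summation-by-parts identity \eqref{lemma 1-0-3} (whose boundary terms carry only $\tilde{D}_y$ of the second argument and therefore vanish), and read off symmetry and nonnegativity from the resulting gradient form $\eipvec{\nabla_h\psi}{\nabla_h\phi}$. Your handling of the two delicate points — the boundary-row identification and the use of the mean-zero constraint to force the constant to vanish in the strict-positivity step — is accurate, so nothing is missing.
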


For any $\varphi_1 ,\varphi_2,\varphi \in\mathring{\mathcal V}_{\mathrm{p},x}(\Omega)$, we define 
	\begin{equation} 
\langle \varphi_1 , \varphi_2 \rangle_{-1,h,\Omega} :=  \left\langle \varphi_1 , L_h^{-1} (\varphi_2)\right\rangle_\Omega, \quad \| \varphi  \|_{-1,h,\Omega} := \sqrt{\langle \varphi , \varphi \rangle_{-1,h,\Omega}} .
	\end{equation} 
It is straightforward to prove that this defines an inner-product/norm combination.

Likewise, for any $\varphi_1 ,\varphi_2,\varphi \in\mathring{\mathcal V}_{\mathrm{p},x}(\Gamma)$, we define 
	\begin{equation} 
\langle \varphi_1 , \varphi_2 \rangle_{-1,h,\Gamma} =  \left\langle \varphi_1 ,  (-\Delta_h^x)^{-1} (\varphi_2)\right\rangle_\Gamma , \quad 
\| \varphi  \|_{-1,h,\Gamma} = \sqrt{\langle \varphi , \varphi \rangle_{-1,h,\Gamma}} .
	\end{equation} 
As above,  for any $\varphi \in\mathring{\mathcal V}_{\mathrm{p},x}(\Gamma)$, $\psi = (-\Delta^x_h)^{-1}(\varphi) \in \mathring{\mathcal V}_{\mathrm{p},x}(\Gamma)$ is the unique solution to the problem
	\[
-\Delta^x_h \psi = \varphi.
	\]

	\subsection{The fully discrete numerical scheme} 
After taking great pains to define the appropriate operators, we are now in a position to define the finite difference scheme.

The following finite difference scheme is proposed, using the convex-splitting approach: given $\phi^n \in {\mathcal V}_{\mathrm{p},x}(\Omega)$, find  $\phi^{n+1}, \mu^{n+1}\in {\mathcal V}_{\mathrm{p},x}^+(\Omega)$, such that
	\begin{align}  
\frac{\phi^{n+1} - \phi^n}{s } & = \Delta_h \mu^{n+1} ,\label{scheme-CHDBC-1}  
	\\
\mu^{n+1} & = \varepsilon^{-1} ( \ln ( 1+ \phi^{n+1} ) - \ln (1 - \phi^{n+1} ) - \theta_0 \phi^n ) 
     - \varepsilon \Delta_h \phi^{n+1}  ,     
 	\label{scheme-CHDBC-2} 
	\\
\tilde{D}_{y} \mu^{n+1}_{i, 0} & = \tilde{D}_{y} \mu^{n+1}_{i, N} = 0  ,
	\label{scheme-CHDBC-3}
	\\
\phi^{n+1}_{i,0} &= \phi^{n+1}_{B,i} , \quad  \phi^{n+1}_{i,N} = \phi^{n+1}_{T,i}	 , 
 	\label{scheme-CHDBC-4}	
	\\
\frac{\phi^{n+1}_B - \phi^n_B}{s } & = \Delta_h^x \mu_B^{n+1} ,
 	\label{scheme-CHDBC-5}	
	\\
\mu_B^{n+1}  &= \varepsilon^{-1} ( \ln ( 1+ \phi^{n+1}_B ) - \ln (1 - \phi^{n+1}_B ) - \theta_0 \phi^n_B ) - \kappa \Delta_h^x \phi^{n+1}_B  - \varepsilon \tilde{D}_{y} \phi^{n+1}_{\cdot, 0}  ,
	\label{scheme-CHDBC-6}
	\\
\frac{\phi^{n+1}_T - \phi^n_T}{s } & = \Delta_h^x \mu_T^{n+1} ,
	\label{scheme-CHDBC-7}
	\\
\mu_T^{n+1} & = \varepsilon^{-1} ( \ln ( 1+ \phi^{n+1}_T ) - \ln (1 - \phi^{n+1}_T ) - \theta_0 \phi^n_T ) - \kappa \Delta_h^x \phi^{n+1}_T  + \varepsilon \tilde{D}_{y} \phi^{n+1}_{\cdot, N}  .  
	\label{scheme-CHDBC-8}  
	\end{align}
Observe that the ghost points $\phi_{\cdot,-1}$ and $\phi_{\cdot,N+1}$ are involved in the discrete normal derivative term in~\eqref{scheme-CHDBC-6} and ~\eqref{scheme-CHDBC-8}. Further note that second-order spatial accuracy is ensured at the boundary. This present technique requires an assumption that equation~\eqref{equation-CHDBC-1} can be extended beyond the boundary and the exact solution is sufficiently regular, so that the ghost point evaluations (at $j=-1$ and $j=N+1$) could be considered.  By the standard summation-by-parts formulae  introduced in the paper, we can prove easily that the method is mass conservative. In particular, if a solution exists at each time level $n\in\mathbb{N}$, then
	\begin{align}
\overline{\phi^n} & :=\left\langle \phi^n, 1 \right\rangle= \left\langle \phi^0, 1 \right\rangle= \overline{\phi^0} := \beta_0,
	\label{inner_mass}
	\\
\overline{\phi^n_B} & :=\left\langle\phi^n_B, 1 \right\rangle_\Gamma= \left\langle\phi^0_B, 1 \right\rangle_\Gamma = \overline{\phi_B^0}  =: \beta_{B,0},
	\label{bottom_mass}
	\\
\overline{\phi_T^n} & :=\left\langle\phi^n_T, 1 \right\rangle_\Gamma = \left\langle\phi^0_T, 1 \right\rangle_\Gamma =\overline{\phi_T^0} =: \beta_{T,0}.	
	\label{top_mass}
\end{align}

	\section{Unique solvability and positivity preserving properties}  
	\label{sec:positivity} 

We need to define a  subspace of grid functions that are mean-zero in the bulk and boundary regions simultaneously. To do so, we need a special class of grid functions.
	\begin{defn}
The function $f\in\mathcal{V}_{\mathrm{p},x}(\Omega)$ is called a \textbf{bulk-boundary constant-mass function} iff
	\begin{equation}
f_{i,j} := 
	\begin{cases}
f_{B,0}, &  i\in\mathbb{Z}, \quad j = 0, 
	\\
f_{T,0}, &  i\in\mathbb{Z}, \quad j = N,
	\\
f_0,     &  i\in\mathbb{Z}, \quad 1\le j \le N-1, 
	\end{cases}
	\end{equation}
where $f_{B,0}, \, f_{T,0},\, f_0\in\mathbb{R}$ are constants. The vector subspace of all bulk-boundary constant-mass functions is denoted $M$. The grid function subspace
	\[
H:= \left\{  q \in \mathring{\mathcal V}_{\mathrm{p},x}(\Omega) \ \middle| \  q_B:= q_{\star , 0} \in \mathring{\mathcal{V}}_{\mathrm{p},x}(\Gamma_B) \ \mbox{and} \ q_T:= q_{\star , N} \in \mathring{\mathcal{V}}_{\mathrm{p},x}(\Gamma_T)  \right\} 
	\]
is called the \textbf{bulk-boundary mean-zero space}. The grid functions $q_B:= q_{\star , 0} \in \mathring{\mathcal{V}}_{\mathrm{p},x}(\Gamma_B)$ and $q_T:= q_{ \star , N} \in \mathring{\mathcal{V}}_{\mathrm{p},x}(\Gamma_T)$ are called the \textbf{boundary projections} of $q$.
	\end{defn}
	
The following proposition will help with our analysis. Its proof 
	is straightforward and, therefore, omitted.
	\begin{prop}
	\label{prop:bulk-bndry-mean-zero}
Suppose that $\phi\in H$ and $f\in M$, that is, $\phi$ is a bulk-boundary mean-zero grid function and $f$ is a bulk-boundary constant-mass function. Then
	\begin{equation}
\left\langle \phi , f \right\rangle_\Omega = 0,\quad \left\langle \phi_{\star,0} , f_{\star,0} \right\rangle_\Gamma = 0 ,\quad \left\langle \phi_{\star,N} , f_{\star,N} \right\rangle_\Gamma = 0 .
	\end{equation}
Furthermore, suppose $\psi \in  {\mathcal{V}}_{\mathrm{p},x}(\Omega)$ satisfies  $\overline{\psi} = \alpha_0$ (bulk average mass), $\overline{\psi_B}  = \alpha_{B,0}$ (bottom boundary average mass), and $\overline{\psi_T}  = \alpha_{T,0}$ (top boundary average mass), where $\alpha_0, \, \alpha_{B,0}, \,\alpha_{T,0}$ are constants and
	\[
\psi_B : = \psi_{\star,0} \quad \mbox{and}\quad    \psi_T := \psi_{\star, N} 
	\] 
are the boundary projections of $\psi$. Define the bulk-boundary constant mass function
	\begin{equation}
\mathfrak{a}_{i,j} := 
	\begin{cases}
\alpha_{B,0}, & i\in\mathbb{Z}, \quad  j = 0, 
	\\
\alpha_{T,0}, & i\in\mathbb{Z}, \quad  j = N, 
	\\
\frac{1}{1-h} \left[\alpha_0-\frac{h}{2}\left(\alpha_{B,0} + \alpha_{T,0}\right)\right], & i\in\mathbb{Z}, \quad 1\le j \le N-1.
	\end{cases}
	\end{equation}
Then
	\[
\psi - \mathfrak{a} \in H.
	\]
In other words, $\psi-\mathfrak{a}$ is a bulk-boundary mean-zero function.
	\end{prop}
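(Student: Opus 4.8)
The plan is to verify the two assertions of Proposition~\ref{prop:bulk-bndry-mean-zero} directly from the definitions of the inner products and the membership conditions for $H$ and $M$.

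For the first assertion, I would compute $\langle \phi, f\rangle_\Omega$ by expanding the discrete $L^2$ inner product $h^2\sum_{i=0}^{N-1}\sum_{j=0}^N w_j \phi_{i,j} f_{i,j}$ and splitting the $j$-sum into the three regimes where $f$ is constant: $j=0$ (value $f_{B,0}$), $j=N$ (value $f_{T,0}$), and $1\le j\le N-1$ (value $f_0$). Since $f$ is constant in $i$ on each layer, each piece factors as a constant times a partial sum of $\phi$. The interior piece gives $f_0 \sum_{j=1}^{N-1} h^2\sum_i \phi_{i,j}$, while the two boundary layers carry the weight $w_0=w_N=\tfrac12$. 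The key point is that because $\phi\in H\subset \mathring{\mathcal V}_{\mathrm{p},x}(\Omega)$ with boundary projections $\phi_B,\phi_T$ lying in the respective mean-zero boundary spaces, we have $\langle \phi_B,1\rangle_\Gamma = \langle\phi_T,1\rangle_\Gamma = 0$ and $\overline{\phi}=0$. I would then show the full bulk integral $\langle\phi,1\rangle_\Omega$ decomposes into these layer sums and that each of the three constants $f_{B,0}, f_{T,0}, f_0$ multiplies a vanishing layer sum; the cleanest route is to observe that the interior layer sum equals $\tfrac{1}{h^2}\big(\langle\phi,1\rangle_\Omega - \tfrac{h}{2}\langle\phi_B,1\rangle_\Gamma - \tfrac{h}{2}\langle\phi_T,1\rangle_\Gamma\big)$, all three of which are zero. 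The two boundary identities $\langle\phi_{\star,0},f_{\star,0}\rangle_\Gamma = f_{B,0}\langle\phi_B,1\rangle_\Gamma = 0$ and similarly at the top are then immediate.

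For the second assertion, the strategy is to verify that the explicitly constructed $\mathfrak{a}\in M$ is precisely the bulk-boundary constant-mass function whose three layer averages match those of $\psi$, so that $\psi-\mathfrak{a}$ has vanishing bulk mean and vanishing boundary projection means, i.e. lies in $H$. The boundary projections are trivial: $(\psi-\mathfrak{a})_B = \psi_B - \alpha_{B,0}$ has mean $\overline{\psi_B}-\alpha_{B,0}=0$, and likewise at the top. The substantive check is the bulk mean. I would compute $\overline{\psi-\mathfrak{a}} = \overline{\psi} - \overline{\mathfrak{a}} = \alpha_0 - \overline{\mathfrak{a}}$ and confirm $\overline{\mathfrak{a}}=\alpha_0$. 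Using $|\Omega|=1$, the average $\overline{\mathfrak a}=\langle\mathfrak a,1\rangle_\Omega$ expands via the weights as $\tfrac{h}{2}\alpha_{B,0} + \tfrac{h}{2}\alpha_{T,0} + (1-h)\cdot\tfrac{1}{1-h}\big[\alpha_0 - \tfrac{h}{2}(\alpha_{B,0}+\alpha_{T,0})\big]$, where the factor $(1-h)$ arises because the interior layers $j=1,\dots,N-1$ contribute total measure $h^2\cdot N\cdot(N-1) = (N-1)h = 1-h$ (recalling $Nh=1$). The two boundary layers contribute total measure $\tfrac12 h^2 N = \tfrac{h}{2}$ each. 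Summing telescopes exactly to $\alpha_0$, which forces the interior constant of $\mathfrak a$ to take the stated value $\tfrac{1}{1-h}[\alpha_0 - \tfrac{h}{2}(\alpha_{B,0}+\alpha_{T,0})]$; indeed this is precisely how one would reverse-engineer that formula.

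The proof is entirely routine bookkeeping, so there is no serious obstacle; the only point demanding care is the correct accounting of the quadrature weights $w_j$ and the measure of each horizontal layer. The main thing to get right is that the interior region spans $N-1$ layers of full weight (total measure $1-h$) while the two boundaries carry half-weight (total measure $\tfrac{h}{2}$ apiece), and that these three measures sum to $1=|\Omega|$; the normalization constant $\tfrac{1}{1-h}$ in the definition of $\mathfrak a$ is exactly what compensates for the reduced interior measure so that $\overline{\mathfrak a}=\alpha_0$. Once the weight bookkeeping is consistent, both the orthogonality relations and the decomposition $\psi = \mathfrak a + (\psi-\mathfrak a)$ follow immediately, which is why the authors omit the details.
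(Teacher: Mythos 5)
Your proof is correct: the weight bookkeeping (interior layers of total measure $1-h$, boundary layers of measure $\tfrac{h}{2}$ each), the identity expressing the interior layer sum through $\langle\phi,1\rangle_\Omega$, $\langle\phi_B,1\rangle_\Gamma$, $\langle\phi_T,1\rangle_\Gamma$, and the verification that $\overline{\mathfrak{a}}=\alpha_0$ all check out. The paper omits this proof as straightforward, and your direct computation is exactly the routine verification the authors had in mind.
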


We will need a technical lemma for the proof of Proposition~\ref{prop:equiv-minimization} to follow. The straightforward proof of the lemma is omitted for brevity.
	\begin{lem}
	\label{lem:tech-sumation-H-fun}
Suppose that $f\in \mathcal{V}_{\mathrm{p},x}(\Omega)$,  $g_T\in\mathcal{V}_{\mathrm{p},x}(\Gamma_B)$, and $g_B\in\mathcal{V}_{\mathrm{p},x}(\Gamma_B)$. Suppose that, for all $\psi\in H$, 
	\[
\cipgen{f}{\psi}{\Omega} +\cipgen{g_T}{\psi_T}{\Gamma}+ \cipgen{g_B}{\psi_B}{\Gamma} = 0,
	\]
where we continue to use the convention that $\psi_T = \psi_{\star,N}$ and $\psi_B = \psi_{\star,0}$. Then, there are constants $C_0, C_{T,0}, C_{B,0}\in \mathbb{R}$ such that
	\begin{align}
f_{i,j} + C_0 & = 0, \quad \forall\ i\in\mathbb{N}, \ 1\le j\le N-1,
	\\
\frac{h}{2}f_{T,i} + g_{T,i} + C_{T,0} &= 0 , \quad \forall i\in \mathbb{N},
	\\
\frac{h}{2} f_{B,i} + g_{B,i} + C_{B,0} & = 0 , \quad \forall i\in \mathbb{N}.
	\end{align}
	\end{lem}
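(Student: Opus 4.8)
The plan is to read the hypothesis as saying that the linear functional $L(\psi):=\langle f,\psi\rangle_\Omega+\langle g_T,\psi_T\rangle_\Gamma+\langle g_B,\psi_B\rangle_\Gamma$ annihilates the bulk-boundary mean-zero space $H$, and then to probe $L$ with explicit members of $H$ that isolate the three claimed identities one at a time. Since $H$ is obtained from $\mathcal{V}_{\mathrm{p},x}(\Omega)$ by imposing precisely the three scalar constraints $\langle\psi,1\rangle_\Omega=0$, $\langle\psi_{\star,0},1\rangle_\Gamma=0$, and $\langle\psi_{\star,N},1\rangle_\Gamma=0$, the convenient probes are differences of two single-grid-point indicator functions: for a node $(i_0,j_0)$ let $e^{(i_0,j_0)}\in\mathcal{V}_{\mathrm{p},x}(\Omega)$ be the periodic-in-$x$ grid function taking the value $1$ at $(i_0,j_0)$ and $0$ at every other node with $0\le i\le N-1$. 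Such differences automatically satisfy the mean-zero constraints, so that $L(\psi)=0$ collapses to a two-point identity.

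First I would treat the interior. Given interior nodes $(i_1,j_1)\ne(i_2,j_2)$ with $1\le j_1,j_2\le N-1$, set $\psi=e^{(i_1,j_1)}-e^{(i_2,j_2)}$. Because the weight $w_j=1$ on interior rows, one checks $\langle\psi,1\rangle_\Omega=w_{j_1}-w_{j_2}=0$, while $\psi_{\star,0}=\psi_{\star,N}\equiv 0$ gives the two boundary constraints for free; hence $\psi\in H$. Evaluating $L(\psi)=0$, the two boundary inner products drop out and one is left with $h^2(f_{i_1,j_1}-f_{i_2,j_2})=0$. Thus $f$ is constant over all interior nodes, and denoting that common value by $-C_0$ yields the first identity.

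The two boundary identities are handled in the same fashion, the only new ingredient being the quadrature weight $w_0=w_N=\tfrac12$, which is exactly what produces the prefactor $\tfrac{h}{2}$ in the statement. For the bottom boundary, pick $i_1\ne i_2$ and set $\psi=e^{(i_1,0)}-e^{(i_2,0)}$; here $\langle\psi,1\rangle_\Omega=w_0-w_0=0$, $\langle\psi_{\star,0},1\rangle_\Gamma=1-1=0$, and $\psi_{\star,N}\equiv0$, so again $\psi\in H$. Now $L(\psi)=0$ reads $\tfrac{h^2}{2}(f_{i_1,0}-f_{i_2,0})+h(g_{B,i_1}-g_{B,i_2})=0$; dividing by $h$ shows that $\tfrac{h}{2}f_{B,i}+g_{B,i}$ is independent of $i$, giving the bottom identity with $C_{B,0}$ equal to minus this common value. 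The top identity follows verbatim with $j=N$ and $g_T$ in place of $g_B$, producing $C_{T,0}$.

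I expect no genuine obstacle here: the argument is purely a matter of choosing the right elements of $H$ and bookkeeping the inner-product weights. The only point that demands a moment's care is verifying membership in $H$, in particular that the boundary weight $w_0=w_N=\tfrac12$ renders each indicator difference bulk-mean-zero, since this is precisely the mechanism that decouples the three identities and fixes the $\tfrac{h}{2}$ prefactors. (An alternative, slightly more abstract route is to invoke the standard fact that a functional vanishing on the common kernel of the three mean-zero functionals must be a linear combination of them, and then to match the coefficients of each $\psi_{i,j}$; this yields the same three identities together with explicit formulas for the constants.)
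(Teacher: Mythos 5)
Your proof is correct. Note that the paper itself omits the proof of this lemma (``The straightforward proof of the lemma is omitted for brevity''), so there is nothing to compare against; your argument is precisely the straightforward verification the authors had in mind. The key mechanics all check out: the indicator-difference probes $e^{(i_1,j_1)}-e^{(i_2,j_2)}$ do lie in $H$ because the quadrature weights $w_{j}$ match within a row and the boundary projections are mean-zero by construction, and the weight $w_0=w_N=\tfrac12$ in $\cipgen{\cdot}{\cdot}{\Omega}$ combined with the factor $h$ in $\cipgen{\cdot}{\cdot}{\Gamma}$ is exactly what produces the $\tfrac{h}{2}$ prefactor on $f_{B,i}$ and $f_{T,i}$ after dividing by $h$.
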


To make the expression of the scheme and its properties more compact, let us define
	\[
I(\phi) := (1+\phi)\ln(1+\phi) + (1-\phi)\ln(1-\phi),	
	\]	
from which it follows that
	\[
I'(\phi) = \ln(1+\phi) - \ln(1-\phi).	
	\]
We have the following equivalence property.

	\begin{prop}
	\label{prop:equiv-minimization}
Define the functional
	\begin{equation} 
	\begin{aligned} 
F_h^n (\phi) & :=  \frac{1}{2 s } \|  \phi - \phi^n \|_{-1, h, \Omega}^2 + \frac{1}{h s } \|  \phi_B - \phi^n_B \|_{-1, h, \Gamma}^2 + \frac{1}{h s } \|  \phi_T - \phi^n_T \|_{-1, h, \Gamma}^2
	\\
& \quad + \varepsilon^{-1}\left( \cipgen{I(\phi)}{1}{\Omega}  + \frac{2}{h}   \cipgen{I(\phi_B)}{1}{\Gamma}   + \frac{2}{h} \cipgen{I(\phi_T)}{1}{\Gamma} \right)      
	\\
& \quad  + \frac{\varepsilon}{2} \cipgen{\phi}{L_h\phi}{\Omega}   + \frac{\kappa}{h} \cipgen{\phi_B}{-\Delta_h^x\phi_B}{\Gamma}+\frac{\kappa}{h} \cipgen{\phi_T}{-\Delta_h^x\phi_T}{\Gamma} 
	\\
& \quad - \varepsilon^{-1} \theta_0 \left( \cipgen{\phi^n}{\phi}{\Omega} + \frac{2}{h} \cipgen{\phi_B^n}{\phi_B}{\Gamma} + \frac{2}{h} \cipgen{\phi_T^n}{\phi_T}{\Gamma} \right)  ,  
	\end{aligned} 
    \label{equiv-minimization}
	\end{equation} 
where 
	\[
\phi_B : = \phi_{\star,0}, \quad  \phi_T := \phi_{\star, N},  \quad \phi_B^n : = \phi^n_{\star,0}, \quad   \phi_T^n := \phi^n_{\star, N}
	\] 
are the point-wise boundary projections of $\phi$ and $\phi^n$ into the spaces ${\mathcal V}_{\mathrm{p},x}(\Gamma_B)$ and ${\mathcal V}_{\mathrm{p},x}(\Gamma_T)$, respectively. If $F_h^n$ has a stationary point in the admissible set
	\begin{equation}
A_h :=  \left\{  \phi \in \mathcal{V}_{\mathrm{p},x}(\Omega) \ \middle| \ -1 < \phi_{i,j}  < 1 , \ 0 \le j \le N , \ i\in\mathbb{Z}, \ \overline{\phi} = \beta_0, \ \overline{\phi_B}  = \beta_{B,0}, \ \overline{\phi_T}  = \beta_{T,0} \right\} ,
	\end{equation} 
then this point is a numerical solution of the scheme~\eqref{scheme-CHDBC-1} -- \eqref{scheme-CHDBC-8}. Conversely, if a numerical solution of the scheme~\eqref{scheme-CHDBC-1} -- \eqref{scheme-CHDBC-8} exists, then it is a stationary point of the functional $F_h^n$.
	\end{prop}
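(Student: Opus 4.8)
The plan is to read the statement as a constrained Euler--Lagrange characterization. The admissible set $A_h$ imposes strict positivity together with the three mass constraints $\overline{\phi}=\beta_0$, $\overline{\phi_B}=\beta_{B,0}$, $\overline{\phi_T}=\beta_{T,0}$; consequently, by Proposition~\ref{prop:bulk-bndry-mean-zero}, a first-order perturbation $\psi$ that preserves all three masses is exactly a bulk-boundary mean-zero grid function, i.e.\ $\psi\in H$. Hence $\phi\in A_h$ is a stationary point of $F_h^n$ if and only if the directional derivative $\frac{d}{dt}F_h^n(\phi+t\psi)\big|_{t=0}=0$ for every $\psi\in H$. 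I would compute this first variation term by term and then convert the resulting weak identity into the pointwise scheme using Lemma~\ref{lem:tech-sumation-H-fun}.

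First I would record the one-line variations. Since $\phi-\phi^n$, $\phi_B-\phi_B^n$, $\phi_T-\phi_T^n$ are mean-zero, the $\|\cdot\|_{-1,h}$ terms are smooth, and their variations produce $\frac1s\langle L_h^{-1}(\phi-\phi^n),\psi\rangle_\Omega$ and the $\frac2h$-weighted surface analogues $\frac{2}{hs}\langle(-\Delta_h^x)^{-1}(\phi_B-\phi_B^n),\psi_B\rangle_\Gamma$, using symmetry of $L_h^{-1}$ and $(-\Delta_h^x)^{-1}$. The logarithmic terms contribute $\varepsilon^{-1}\langle I'(\phi),\psi\rangle_\Omega$ with $I'(\phi)=\ln(1+\phi)-\ln(1-\phi)$, plus the $\frac2h$-weighted boundary pieces; the quadratic terms contribute $\varepsilon\langle L_h\phi,\psi\rangle_\Omega$ and $\frac{2\kappa}{h}\langle-\Delta_h^x\phi_B,\psi_B\rangle_\Gamma$ by symmetry of $L_h$ and $-\Delta_h^x$; and the $\theta_0$ terms are linear. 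Collecting everything, stationarity reads $\langle f,\psi\rangle_\Omega+\langle g_B,\psi_B\rangle_\Gamma+\langle g_T,\psi_T\rangle_\Gamma=0$ for all $\psi\in H$, where $f:=\frac1s L_h^{-1}(\phi-\phi^n)+\varepsilon^{-1}I'(\phi)+\varepsilon L_h\phi-\varepsilon^{-1}\theta_0\phi^n$ and $g_B,g_T$ are the purely surface expressions carrying the $\frac2h$ and $\frac{2\kappa}{h}$ prefactors.

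Next I would invoke Lemma~\ref{lem:tech-sumation-H-fun}, which turns this single identity into $f_{i,j}=-C_0$ for $1\le j\le N-1$ together with $\frac h2 f_{B,i}+g_{B,i}=-C_{B,0}$ and $\frac h2 f_{T,i}+g_{T,i}=-C_{T,0}$, for constants $C_0,C_{B,0},C_{T,0}$. For the interior family I set $\mu^{n+1}:=-\frac1s L_h^{-1}(\phi-\phi^n)-C_0$; then $f_{i,j}=-C_0$ becomes precisely \eqref{scheme-CHDBC-2} (using $L_h\phi=-\Delta_h\phi$ on the interior), while applying $-L_h$ to the definition of $\mu^{n+1}$ yields $\frac{\phi-\phi^n}{s}=\Delta_h\mu^{n+1}$, i.e.\ \eqref{scheme-CHDBC-1}, the additive constant being harmless because $L_h$ annihilates constants. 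For the two surface families I set $\mu_B^{n+1}:=-\frac1s(-\Delta_h^x)^{-1}(\phi_B-\phi_B^n)+\text{const}$ (and the analogue on top) and unwind $\frac h2 f_{B,i}+g_{B,i}=-C_{B,0}$ into the pairing of \eqref{scheme-CHDBC-5} with the surface chemical potential \eqref{scheme-CHDBC-6}, again absorbing the constant since $\Delta_h^x$ kills it. The converse direction reverses these identifications: from a scheme solution one recovers the constants $C_0,C_{B,0},C_{T,0}$ (the Lagrange multipliers of the mass constraints) and checks the weak identity on $H$, with $\phi^{n+1}\in A_h$ guaranteed by mass conservation \eqref{inner_mass}--\eqref{top_mass} and the separate positivity analysis.

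The delicate step is the bulk-surface coupling at $j=0,N$. Because the boundary inner product carries the weight $w_j=\frac12$, the lemma weights the bulk integrand $f$ by $\frac h2$ on the boundary rows, so the contribution $\frac h2\,\varepsilon(L_h\phi)_{i,0}$ coming from the bulk gradient energy must combine with the surface terms in $g_B$ to reproduce exactly the normal-derivative coupling $-\varepsilon\tilde{D}_y\phi^{n+1}_{\cdot,0}$ in \eqref{scheme-CHDBC-6} (and $+\varepsilon\tilde{D}_y\phi^{n+1}_{\cdot,N}$ in \eqref{scheme-CHDBC-8}). Verifying this requires reconciling the boundary stencil of the reduced operator $L_h$ --- which is ghost-free and encodes a one-sided normal difference at half-weight --- with the ghost-point, second-order centered derivative $\tilde{D}_y\phi$ used in the scheme, via the $\mu$-Neumann condition \eqref{scheme-CHDBC-3} that eliminates the $\mu$ ghost and the role of the $\phi$ ghost layers. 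Matching these normal-derivative terms, with the correct sign and after clearing the $\frac h2$ versus $\frac2h$ normalizations, is the heart of the argument and the step I expect to demand the most care; the remaining work is bookkeeping of prefactors and the routine observation that the Lagrange constants drop out under $L_h$ and $\Delta_h^x$.
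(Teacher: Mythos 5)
Your overall strategy coincides with the paper's: identify mass-preserving perturbations with the space $H$ (via Proposition~\ref{prop:bulk-bndry-mean-zero}), compute the first variation using symmetry of $L_h^{-1}$, $(-\Delta_h^x)^{-1}$, $L_h$, $-\Delta_h^x$, convert the weak identity into pointwise equations with Lagrange constants via Lemma~\ref{lem:tech-sumation-H-fun}, and reconstruct $\mu^{n+1}$, $\mu_B^{n+1}$, $\mu_T^{n+1}$ by inverting the scheme's evolution equations; your interior reconstruction of $\mu^{n+1}=-\frac1s L_h^{-1}(\phi-\phi^n)-C_0$ and the recovery of \eqref{scheme-CHDBC-1}--\eqref{scheme-CHDBC-2} are exactly right. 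The problem is the step you defer --- the bulk-surface matching at $j=0,N$, which you correctly call ``the heart of the argument.'' It is not postponable bookkeeping: it is the entire content of the proposition, and if you execute it with the functional exactly as written, the prefactors do \emph{not} clear.

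Concretely, set $f := \frac1s L_h^{-1}(\phi-\phi^n)+\varepsilon^{-1}I'(\phi)+\varepsilon L_h\phi-\varepsilon^{-1}\theta_0\phi^n$ and $\tilde g_B := \frac1s(-\Delta_h^x)^{-1}(\phi_B-\phi_B^n)+\varepsilon^{-1}I'(\phi_B)-\kappa\Delta_h^x\phi_B-\varepsilon^{-1}\theta_0\phi_B^n$. Every surface term of $F_h^n$ carries the uniform weight $\frac2h$ relative to the ``natural'' coefficients $\frac1{2s}$, $\varepsilon^{-1}$, $\frac\kappa2$, $\varepsilon^{-1}\theta_0$, so the surface integrand of the first variation is $g_B=\frac2h\tilde g_B$, and Lemma~\ref{lem:tech-sumation-H-fun} turns stationarity at the bottom row into $\frac h2 f_{i,0}+\frac2h\tilde g_{B,i}=\mathrm{const}$, i.e.\ $f_{i,0}+\frac4{h^2}\tilde g_{B,i}=\mathrm{const}$. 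On the other hand, eliminating $\mu^{n+1}$, $\mu_B^{n+1}$ and the ghost row from the scheme --- using \eqref{scheme-CHDBC-1}, \eqref{scheme-CHDBC-3} to get $\mu^{n+1}=-\frac1s L_h^{-1}(\phi^{n+1}-\phi^n)+\mathrm{const}$, the identity $-\Delta_h\phi_{i,0}=L_h\phi_{i,0}+\frac2h\tilde D_y\phi_{i,0}$, and \eqref{scheme-CHDBC-5}--\eqref{scheme-CHDBC-6} to remove $\varepsilon\tilde D_y\phi^{n+1}_{\cdot,0}$ --- yields the reduced boundary equation \eqref{scheme-CHDBC-rewrite-6}, which reads $f_{i,0}+\frac2h\tilde g_{B,i}=\mathrm{const}$. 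The surface weights disagree by a factor $\frac2h$, so with the stated coefficients the stationary points of $F_h^n$ do not solve the scheme (nor conversely); equivalence holds only if the boundary coefficients of $F_h^n$ are replaced by the unweighted ones, the same surface weights as in the discrete energy \eqref{CHDBC-discrete energy}, in which case $g_B=\tilde g_B$ and multiplying the lemma's identity by $\frac2h$ reproduces \eqref{scheme-CHDBC-rewrite-6} exactly. In fairness, the paper's own proof is loose at precisely this point (its displayed variation drops a factor $2$ on the $\kappa$ terms, and \eqref{scheme-CHDBC-rewrite-6-alt}, \eqref{scheme-CHDBC-rewrite-7-alt} are not what Lemma~\ref{lem:tech-sumation-H-fun} actually produces from that variation), so your instinct that this step demands the most care was sound --- but leaving it unexecuted leaves the proposition unproven, and executing it forces a correction of the stated coefficients rather than a mere clearing of normalizations.
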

	\begin{proof}
To start, observe that the numerical solution of~\eqref{scheme-CHDBC-1} and \eqref{scheme-CHDBC-2} can be equivalently rewritten as 
	\begin{equation}
\varepsilon^{-1} ( I'(\phi^{n+1}) - \theta_0 \phi^n ) - \varepsilon \Delta_h \phi^{n+1} + C_n + \frac{1}{s } L_h^{-1} ( \phi^{n+1} - \phi^n ) = 0 ,
    \label{scheme-CHDBC-rewrite-1}
	\end{equation} 
where $C_n\in\mathbb{R}$ is a constant that must be included since $L_h^{-1} ( \phi^{n+1} - \phi^n )$ is technically mean zero. The boundary conditions for $\phi^{n+1}$ must still applied separately via the use of ghost cells. However, we may effectively eliminate the values at the ghost cell points. To do so, we write~\eqref{scheme-CHDBC-6} and~\eqref{scheme-CHDBC-8} in the equivalent forms
	\begin{align}  
\varepsilon \tilde{D}_{y} \phi^{n+1}_{\star,0} & = \frac{1}{s } ( - \Delta_h^x )^{-1} ( \phi^{n+1}_B - \phi^n_B )+ C_{B,n} + \varepsilon^{-1} ( I'(\phi^{n+1}_B)  - \theta_0 \phi^n_B ) - \kappa \Delta_h^x \phi^{n+1}_B   ,   
	\label{scheme-CHDBC-rewrite-2}	
	\\  
-\varepsilon \tilde{D}_{y} \phi^{n+1}_{\star,N} & = \frac{1}{s} ( - \Delta_h^x )^{-1} ( \phi^{n+1}_T - \phi^n_T )  +C_{T,n} + \varepsilon^{-1} (I'(\phi^{n+1}_T) - \theta_0 \phi^n_T )  - \kappa \Delta_h^x \phi^{n+1}_T , 
	\label{scheme-CHDBC-rewrite-3}	
	\end{align}
where $C_{B,n}$ and $C_{T,n}$ are constants. Now, observe that, in general, for any $\phi\in \mathcal{V}_{\mathrm{p},x}^+(\Omega)$,
	\begin{equation}
-\Delta_h \phi_{i,j} =
	\begin{cases}
L_h \phi_{i,0} +\frac{2}{h}\tilde{D}_y\phi_{i,0}, & \ i\in \mathbb{Z}, \ j = 0, 	
	\\
L_h \phi_{i,N} -\frac{2}{h}\tilde{D}_y\phi_{i,N},& \ i\in \mathbb{Z},	\ j = N, 
	\\
L_h \phi_{i,j} = -\Delta_h\phi_{i,j}, &  \ i\in \mathbb{Z}, \ 1 \le j \le N-1.
	\end{cases}	
	\end{equation}
Thus, at the boundary points, the solution of the scheme must satisfy
	\begin{align}  
- \varepsilon \Delta_h \phi^{n+1}_{i,0} &  = \varepsilon L_h  \phi^{n+1}_{i,0}+ \frac{2 \varepsilon}{h} \tilde{D}_{y} \phi^{n+1}_{i,0}  
	\nonumber 
	\\
& = \varepsilon L_h \phi^{n+1}_{I ,0} + \frac{2}{h} \Big( \frac{1}{s } ( - \Delta_h^x )^{-1} ( \phi^{n+1}_B - \phi^n_B )_i + C_{B,n} 
	\nonumber
	\\
& \quad +\varepsilon^{-1} ( I'(\phi^{n+1}_{B,i} ) - \theta_0 \phi^n_{B,i} )  - \kappa \Delta_h^x \phi^{n+1}_{B,i} \Big) ,
	\label{scheme-CHDBC-rewrite-4}
	\end{align}   
and
	\begin{align}  
 - \varepsilon \Delta_h \phi^{n+1}_{i,N} &  = \varepsilon L_h  \phi^{n+1}_{i,N} - \frac{2 \varepsilon}{h} \tilde{D}_{y} \phi^{n+1}_{i,N}  
	\nonumber 
	\\
& = \varepsilon L_h \phi^{n+1}_{i,N} + \frac{2}{h} \Big( \frac{1}{s } ( - \Delta_h^x )^{-1} ( \phi^{n+1}_T - \phi^n_T )_i + C_{T,n} 
	\nonumber
	\\
& \quad + \varepsilon^{-1} ( I'(\phi^{n+1}_{T,i}) - \theta_0 \phi^n_T ) - \kappa \Delta_h^x \phi^{n+1}_{T,i}  \Big).
	\label{scheme-CHDBC-rewrite-5}
	\end{align}
For the boundary points, then, combining \eqref{scheme-CHDBC-rewrite-1}, \eqref{scheme-CHDBC-rewrite-4}, and \eqref{scheme-CHDBC-rewrite-5}, we have 
	\begin{align}  
0 & = \varepsilon^{-1} ( I'(\phi^{n+1}_{i,0}) - \theta_0 \phi^n_{i,0} ) + C_n + \frac{1}{s} L_h^{-1} (\phi^{n+1} - \phi^n)_{i,0}   
	\nonumber 
	\\
& \quad + \varepsilon L_h \phi^{n+1}_{I ,0} + \frac{2}{h} \Big( \frac{1}{s } ( - \Delta_h^x )^{-1} ( \phi^{n+1}_B - \phi^n_B )_i + C_{B,n} 
	\nonumber
	\\
& \quad +\varepsilon^{-1} ( I'(\phi^{n+1}_{B,i} ) - \theta_0 \phi^n_{B,i} )  - \kappa \Delta_h^x \phi^{n+1}_{B,i} \Big) ,
	\label{scheme-CHDBC-rewrite-6}
	\end{align}   
and
	\begin{align}  
0 & =  \varepsilon^{-1} ( I'(\phi^{n+1}_{i,N}) - \theta_0 \phi^n_{i,N} ) + C_n + \frac{1}{s} L_h^{-1} (\phi^{n+1} - \phi^n)_{i,N} 
	\nonumber 
	\\
& \quad +  \varepsilon L_h \phi^{n+1}_{i,N} + \frac{2}{h} \Big( \frac{1}{s } ( - \Delta_h^x )^{-1} ( \phi^{n+1}_T - \phi^n_T )_i + C_{T,n} 
	\nonumber
	\\
& \quad + \varepsilon^{-1} ( I'(\phi^{n+1}_{T,i}) - \theta_0 \phi^n_T ) - \kappa \Delta_h^x \phi^{n+1}_{T,i}  \Big).
	\label{scheme-CHDBC-rewrite-7}
	\end{align}
	
\noindent{($\implies$):} Now, suppose that $F_h^n$ has a stationary point, $\phi\in A_h$. In other words, for any $\psi\in H$, 
	\[
0 = \left.\frac{\diff F_h^n}{\diff\tau}\left(\phi+\tau \psi\right)\right|_{\tau = 0}	.
	\]
First, observe that, given $\phi\in A_h$ and $\psi\in H$,  $\phi+\tau \psi\in A_h$, provided $\tau\in\mathbb{R}$ is sufficiently small. It follows that  
	\begin{align}
0 & =  \left.\frac{\diff F_h^n}{\diff\tau}\left(\phi+\tau \psi\right)\right|_{\tau = 0}
	\nonumber
	\\
& =  \frac{1}{s} \cipgen{\psi}{L_h^{-1}(\phi - \phi^n)}{\Omega} + \frac{2}{h s} \cipgen{\psi_B}{(-\Delta_h^x)^{-1}(\phi_B - \phi^n_B)}{\Gamma}  + \frac{2}{h s} \cipgen{\psi_T}{(-\Delta^x_h)^{-1}(\phi_T - \phi^n_T)}{\Gamma}
	\nonumber
	\\
& \quad + \varepsilon^{-1}\left( \cipgen{I'(\phi)}{\psi}{\Omega}  + \frac{2}{h}   \cipgen{I'(\phi_B)}{\psi_B}{\Gamma}   + \frac{2}{h} \cipgen{I'(\phi_T)}{\psi_T}{\Gamma} \right)
	\nonumber   
	\\
& \quad + \varepsilon  \cipgen{\psi}{L_h\phi}{\Omega}   + \frac{\kappa}{h} \cipgen{\psi_B}{-\Delta_h^x\phi_B}{\Gamma}+\frac{\kappa}{h} \cipgen{\psi_T}{-\Delta_h^x\phi_T}{\Gamma} 
	\nonumber
	\\
& \quad - \varepsilon^{-1} \theta_0 \left( \cipgen{\phi^n}{\psi}{\Omega} + \frac{2}{h} \cipgen{\phi_B^n}{\psi_B}{\Gamma} + \frac{2}{h} \cipgen{\phi_T^n}{\psi_T}{\Gamma} \right) .  
	\end{align}
Appealing to our technical lemma, Lemma~\ref{lem:tech-sumation-H-fun}, it follows that
	\begin{equation}
\frac{1}{s} L_h^{-1}(\phi - \phi^n)_{i,j} + \varepsilon^{-1}I'(\phi_{i,j}) + \varepsilon L_h\phi_{i,j} - \varepsilon^{-1} \theta_0  \phi^n_{i,j} + \tilde{C}_n = 0, \quad \forall \, i\in\mathbb{N}, \ 1\le j \le N-1,
	\label{scheme-CHDBC-rewrite-1-alt}
	\end{equation}
where $\tilde{C}_n\in\mathbb{R}$ is a constant. Furthermore, at the top boundary points,
	\begin{align}
0 &= \frac{\varepsilon^{-1}}{2} ( I'(\phi^{n+1}_{i,N}) - \theta_0 \phi^n_{i,N} ) + \frac{\varepsilon}{2} L_h \phi^{n+1}_{i,N} + \frac{1}{2 s } L_h^{-1} ( \phi^{n+1} - \phi^n )_{i,N} &
	\nonumber
	\\
& \quad +  \frac{1}{h s} (-\Delta^x_h)^{-1}(\phi_T - \phi^n_T)_i + \frac{1}{h}\varepsilon^{-1}\left(I'(\phi_{T,i}) - \theta_0\phi^n_{T,i} \right) -\frac{1}{h}\kappa\Delta_h^x\phi_{T,i} + \tilde{C}_{T,n},
	\label{scheme-CHDBC-rewrite-6-alt}
	\end{align}
where $\tilde{C}_{T,n}\in\mathbb{R}$ is a constant, and at the bottom boundary points,
	\begin{align}
0 &= \frac{\varepsilon^{-1}}{2} ( I'(\phi^{n+1}_{i,0}) - \theta_0 \phi^n_{i,0} ) + \frac{\varepsilon}{2} L_h \phi^{n+1}_{i,0} + \frac{1}{2 s } L_h^{-1} ( \phi^{n+1} - \phi^n )_{i,0} &
	\nonumber
	\\
& \quad +  \frac{2}{h s} (-\Delta^x_h)^{-1}(\phi_B - \phi^n_B)_i + \frac{1}{h}\varepsilon^{-1}\left(I'(\phi_{B,i}) - \theta_0\phi^n_{B,i} \right) -\frac{1}{h}\kappa\Delta_h^x\phi_{B,i} + \tilde{C}_{B,n},
	\label{scheme-CHDBC-rewrite-7-alt}
	\end{align}
where $\tilde{C}_{B,n}\in\mathbb{R}$ is a constant. Comparing \eqref{scheme-CHDBC-rewrite-1}, \eqref{scheme-CHDBC-rewrite-6}, and \eqref{scheme-CHDBC-rewrite-7} with \eqref{scheme-CHDBC-rewrite-1-alt}, \eqref{scheme-CHDBC-rewrite-6-alt}, and \eqref{scheme-CHDBC-rewrite-7-alt}, respectively, and adjusting the constants as necessary, we have the result.

	\medskip

\noindent{($\impliedby$):} The other direction is simple. If \eqref{scheme-CHDBC-rewrite-1-alt}, \eqref{scheme-CHDBC-rewrite-6-alt}, and \eqref{scheme-CHDBC-rewrite-7-alt} all hold, then the solution of those three equations must be a stationary point of $F_h^n$.
	\end{proof}
	
	\begin{prop}
	\label{prop:convexity}
The functional $F_h^n$ is strictly convex over the admissible set $A_h$.
	\end{prop}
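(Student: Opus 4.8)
The plan is to establish strict convexity by showing that the second directional derivative of $F_h^n$ is strictly positive along every admissible direction. Since $A_h$ is the intersection of the open box $\{-1 < \phi_{i,j} < 1\}$ with the affine subspace on which the three average masses $\overline{\phi}$, $\overline{\phi_B}$, $\overline{\phi_T}$ are fixed, the segment joining any two distinct points $\phi_1, \phi_2 \in A_h$ lies entirely in $A_h$ and has direction $\psi := \phi_2 - \phi_1$, which is bulk-boundary mean-zero, i.e. $\psi \in H$. Thus it suffices to prove that for every $\phi \in A_h$ and every $\psi \in H$ with $\psi \not\equiv 0$,
\[
g''(0) > 0, \qquad g(\tau) := F_h^n(\phi + \tau\psi).
\]
First I would note that $\phi + \tau\psi \in A_h$ for $|\tau|$ small, and that $\psi \in H$ makes $\psi$, $\psi_B$, $\psi_T$ mean-zero, so that $L_h^{-1}\psi$, $(-\Delta_h^x)^{-1}\psi_B$, $(-\Delta_h^x)^{-1}\psi_T$ are well-defined.

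Next I would differentiate $g(\tau)$ twice and collect the contributions by type. The three $\|\cdot\|_{-1,h}^2$ terms are quadratic forms and give
\[
\frac{1}{s}\cipgen{\psi}{L_h^{-1}\psi}{\Omega} + \frac{2}{hs}\cipgen{\psi_B}{(-\Delta_h^x)^{-1}\psi_B}{\Gamma} + \frac{2}{hs}\cipgen{\psi_T}{(-\Delta_h^x)^{-1}\psi_T}{\Gamma} \ge 0,
\]
each summand being nonnegative because $L_h$ is positive (by the Proposition above) and $-\Delta_h^x$ is positive semidefinite on the mean-zero space, whence so are their inverses. The bulk and surface diffusion terms contribute
\[
\varepsilon \cipgen{\psi}{L_h\psi}{\Omega} + \frac{2\kappa}{h}\cipgen{\psi_B}{-\Delta_h^x\psi_B}{\Gamma} + \frac{2\kappa}{h}\cipgen{\psi_T}{-\Delta_h^x\psi_T}{\Gamma} \ge 0,
\]
again by positivity of $L_h$ and $-\Delta_h^x$. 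Crucially, the explicit expansive term $-\varepsilon^{-1}\theta_0(\cdots)$ is \emph{linear} in $\phi$, so its second derivative vanishes; this is precisely the role of the convex-concave splitting.

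The decisive positive contribution comes from the logarithmic functional $I$. Since
\[
I''(c) = \frac{1}{1+c} + \frac{1}{1-c} > 0 \quad \mbox{for all } c \in (-1,1),
\]
and since $\phi \in A_h$ forces $\phi_{i,j} \in (-1,1)$ at every grid point, the bulk logarithmic term contributes
\[
\varepsilon^{-1}\cipgen{I''(\phi)\psi^2}{1}{\Omega} = \varepsilon^{-1} h^2 \sum_{i=0}^{N-1}\sum_{j=0}^{N} w_j\, I''(\phi_{i,j})\,\psi_{i,j}^2 > 0
\]
whenever $\psi \not\equiv 0$, because every $w_j$ and every $I''(\phi_{i,j})$ is strictly positive and this bulk sum already ranges over all grid values of $\psi$. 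The two boundary logarithmic terms add further nonnegative quantities. Summing all contributions, $g''(0)$ is bounded below by the strictly positive bulk term, hence $g''(0) > 0$, which yields strict convexity of $F_h^n$ on $A_h$.

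No genuine obstacle arises: the argument is a routine second-variation estimate whose conclusion is inherited entirely from the singular term $I$, with the $H^{-1}$ and diffusion pieces serving only as nonnegative ballast and the explicit term dropping out by linearity. The only points demanding care are the bookkeeping of the factors of $2$ (arising both from differentiating the quadratic forms twice and from the $\frac{2}{h}$ prefactors on the boundary), the weights $w_j$, and the verification that the inverse operators act only on mean-zero arguments — all guaranteed by $\psi \in H$ and $\phi \in A_h$.
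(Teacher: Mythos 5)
Your proof is correct and follows essentially the same route as the paper: compute the second directional derivative of $F_h^n$ along a direction $\psi \in H$, observe that every term is nonnegative, and conclude strict positivity when $\psi \not\equiv 0$. You are in fact somewhat more careful than the paper's own argument, since you justify why directions in $H$ suffice (the segment between two points of $A_h$ lies in $A_h$), you identify concretely which term forces strict positivity (the bulk $I''$ sum, which covers all grid points including the boundary rows), and your coefficient $\frac{2\kappa}{h}$ on the surface diffusion contribution is the correct one (the paper's display omits that factor of $2$, harmlessly for the sign argument).
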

	\begin{proof}
Fix $\phi\in A_h$. Let $\psi\in H$ be arbitrary. For all $\tau$ in a (sufficiently small) neighborhood of 0, $\phi + \tau\psi\in A_h$. The second variation is
	\begin{align}
\left.\frac{\diff^2 F_h^n}{\diff \tau^2}(\phi+\tau\psi) \right|_{\tau = 0} & =  \frac{1}{s} \cipgen{\psi}{L_h^{-1}\psi}{\Omega} + \frac{2}{h s} \cipgen{\psi_B}{(-\Delta_h^x)^{-1}\psi_B}{\Gamma}  + \frac{2}{h s} \cipgen{\psi_T}{(-\Delta^x_h)^{-1}\psi_T}{\Gamma}
	\nonumber
	\\
& \quad + \varepsilon^{-1}\left( \cipgen{\psi I''(\phi)}{\psi}{\Omega}  + \frac{2}{h}   \cipgen{\psi_B I''(\phi_B)}{\psi_B}{\Gamma}   + \frac{2}{h} \cipgen{\psi_T I''(\phi_T)}{\psi_T}{\Gamma} \right)
	\nonumber   
	\\
& \quad + \varepsilon  \cipgen{\psi}{L_h\psi}{\Omega} + \frac{\kappa}{h} \cipgen{\psi_B}{-\Delta_h^x\psi_B}{\Gamma}+\frac{\kappa}{h} \cipgen{\psi_T}{-\Delta_h^x\psi_T}{\Gamma} 
	\nonumber
	\\
& \ge 0 .  
	\end{align} 
The variation is strictly positive if $\psi\in H$ is not identically the zero grid function.
	\end{proof}
	
Our next task is to prove that $F_h^n$ always has a unique minimizer, which is also a stationary point. The following preliminary estimates are needed for the unique solvability analysis. The proofs are similar to an associated result in~\cite{chen19b} and are omitted for brevity.
	\begin{lem}[\cite{chen19b}]
	\label{CHDBC-positivity-Lem-0}  
Suppose that $\varphi^\star$, $\hat{\varphi} \in \mathcal{V}_{\mathrm{p},x}(\Omega)$, with $\hat{\varphi} - \varphi^\star\in\mathring{\mathcal{V}}_{\mathrm{p},x}(\Omega)$. Assume that $-1 < \hat{\varphi}_{i,j} , \varphi^\star_{i,j} < 1$, for all $0 \le i,j \le N$. The following estimate is valid: 
	\begin{equation} 
\|  L_h^{-1} ( \hat{\varphi} - \varphi^\star ) \|_{\infty,\Omega} \le C_1 ,
  	\label{CHDBC-Lem-0} 
	\end{equation} 
where $C_1>0$ only depends on $\Omega$. Similarly, for $f, g \in \mathcal{V}_{\mathrm{p},x}(\Gamma)$, with $f - g \in\mathring{\mathcal{V}}_{\mathrm{p},x}(\Gamma)$, and $-1 < f_{i} , g_{i} < 1$, for all $0 \le i \le N$, the following estimate becomes available: 
	\begin{equation} 
\| (-\Delta_h^x)^{-1} ( f-g ) \|_{\infty, \Gamma} \le C_2 ,
  	\label{CHDBC-Lem-1} 
	\end{equation} 
where $C_2>0$ only depends on $\Gamma_T$, $\Gamma_B$. 
	\end{lem}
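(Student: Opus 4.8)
The plan is to prove both estimates by combining a uniform-in-$h$ discrete elliptic regularity bound with a discrete Sobolev embedding, exploiting the fact that the two arguments differ only by a mean-zero, pointwise-bounded grid function. Set $\varphi := \hat{\varphi} - \varphi^\star \in \mathring{\mathcal V}_{\mathrm{p},x}(\Omega)$ and $v := L_h^{-1}(\varphi) \in \mathring{\mathcal V}_{\mathrm{p},x}(\Omega)$, so that $L_h v = \varphi$. Since $-1 < \hat\varphi_{i,j}, \varphi^\star_{i,j} < 1$ for all $i,j$, we have $\nrm{\varphi}_\infty \le 2$ and hence $\| \varphi \|_{2,\Omega} \le 2\sqrt{|\Omega|}$, a bound depending only on $\Omega$ and independent of the mesh size. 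The whole argument then reduces to showing that $L_h^{-1}$ sends a grid function of controlled $\|\cdot\|_{2,\Omega}$ norm to one of controlled $\nrm{\cdot}_\infty$, uniformly in $h$.

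Next I would carry out the discrete elliptic estimates. Testing $L_h v = \varphi$ against $v$ in the $\cipgen{\cdot}{\cdot}{\Omega}$ inner product and using the symmetry and positivity of $L_h$ together with summation by parts~\eqref{lemma 1-0-3} (the boundary contributions vanish because the homogeneous Neumann conditions are built into the definition of $L_h$), one obtains an energy identity with $\cipgen{v}{L_h v}{\Omega} = \nrm{\nabla_h v}_2^2$, whence $\nrm{\nabla_h v}_2^2 = \cipgen{v}{\varphi}{\Omega} \le \| v \|_{2,\Omega}\,\| \varphi \|_{2,\Omega}$. A discrete Poincar\'e inequality for mean-zero grid functions, $\| v \|_{2,\Omega} \le C\nrm{\nabla_h v}_2$, then yields $\| v \|_{H_h^1} \le C\| \varphi \|_{2,\Omega}$. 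To bootstrap to $H_h^2$, I would use that $L_h v$ coincides with $-\Delta_h v$ at all interior rows and that the encoded Neumann conditions control the two modified boundary rows; a standard discrete regularity argument (as in~\cite{guo16, wise10, chen19b}) gives $\| \Delta_h v \|_{2,\Omega} \le C\| L_h v \|_{2,\Omega} = C\| \varphi \|_{2,\Omega}$, so that $\| v \|_{H_h^2} \le C\| \varphi \|_{2,\Omega}$ with $C$ independent of $h$.

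The estimate~\eqref{CHDBC-Lem-0} then follows from a discrete two-dimensional Sobolev embedding $\nrm{v}_\infty \le C\| v \|_{H_h^2}$, valid with $C$ depending only on $\Omega$ (the discrete analog of $H^2(\Omega)\hookrightarrow L^\infty(\Omega)$ in two space dimensions); combining the displayed inequalities produces $\nrm{v}_\infty \le C_1$ with $C_1 = C_1(\Omega)$. The boundary estimate~\eqref{CHDBC-Lem-1} is handled in the same spirit but in one dimension, where matters simplify: writing $w := (-\Delta_h^x)^{-1}(f-g)$, the energy identity $\nrm{D_x w}_{2,\Gamma}^2 = \cipgen{w}{f-g}{\Gamma}$ together with a one-dimensional discrete Poincar\'e inequality gives $\| w \|_{H_h^1} \le C\| f-g \|_{2,\Gamma} \le 2C$, after which the one-dimensional discrete Sobolev embedding $\nrm{w}_\infty \le C\| w \|_{H_h^1}$ yields the bound with $C_2 = C_2(\Gamma_T,\Gamma_B)$.

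The main obstacle I anticipate lies precisely in the uniformity in $h$ of these two ingredients: the discrete $H_h^2$ regularity near the top and bottom rows, where $L_h$ deviates from $-\Delta_h$ in order to encode the homogeneous Neumann conditions, so that one must verify these boundary corrections do not degrade the regularity constant; and the discrete two-dimensional Sobolev embedding, whose constant must be controlled independently of the mesh. Both are by now routine in the finite-difference convex-splitting literature, which is why the authors defer to~\cite{chen19b}, but they are the genuinely substantive steps, in contrast with the purely one-dimensional boundary case, which requires only the elementary embedding $H_h^1 \hookrightarrow L^\infty$.
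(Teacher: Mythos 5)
Your proposal is correct and takes essentially the same route as the paper's own (omitted) argument, which defers to~\cite{chen19b}: there, too, the bound follows from the trivial estimate $\| \hat{\varphi}-\varphi^\star \|_{2,\Omega} \le 2\sqrt{|\Omega|}$, a uniform-in-$h$ discrete elliptic regularity estimate for the mean-zero inverse operator, and the discrete Sobolev embedding $H_h^2 \hookrightarrow \ell^\infty$ (respectively $H_h^1 \hookrightarrow \ell^\infty$ in the one-dimensional boundary case). Your observation that $L_h v$ coincides with $-\Delta_h v$ once the ghost layers are filled by reflection, so that $\| \Delta_h v \|_{2,\Omega} = \| \hat{\varphi}-\varphi^\star \|_{2,\Omega}$ exactly, is precisely the adaptation needed to transfer the periodic-case argument of~\cite{chen19b} to the mixed periodic--Neumann setting here.
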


	\begin{thm}  
	\label{CHDBC-positivity} 
Given $\phi^n \in  {\mathcal{V}}_{\mathrm{p},x}(\Omega)$, with $-1 < \phi^n_{i,j} < 1$, $0 \le i, j \le N$,  and $\overline{\phi^n} = \beta_0$, $\overline{\phi^n_B}  = \beta_{B,0}$, $\overline{\phi^n_T}  = \beta_{T,0}$, there exists a unique solution $\phi^{n+1} \in {\mathcal{V}}_{\mathrm{p},x}(\Omega)$ to the scheme \eqref{scheme-CHDBC-1} -- \eqref{scheme-CHDBC-8}, with $-1 < \phi^{n+1}_{i,j} < 1$, $0 \le i, j \le N,$ and $\overline{\phi^{n+1}} = \beta_0$,  $\overline{\phi^{n+1}_B}  = \beta_{B,0}$, $\overline{\phi^{n+1}_T}  = \beta_{T,0}$. In particular, the solution is the unique minimizer and stationary point of the functional $F_h^n$, and we write
	\[
\phi^{n+1} = \mathop{\mathrm{argmin}}_{\phi\in A_h} F_h^n(\phi).
	\]
	\end{thm}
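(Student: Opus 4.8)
The plan is to reduce the existence–uniqueness–positivity statement to a constrained minimization of $F_h^n$ and then exploit the two structural facts already in hand: Proposition~\ref{prop:equiv-minimization}, which identifies stationary points of $F_h^n$ in $A_h$ with solutions of the scheme, and Proposition~\ref{prop:convexity}, which gives strict convexity of $F_h^n$ on the convex set $A_h$. First I would show that $F_h^n$ attains a minimum on the closed admissible set $\overline{A_h} := \{\phi : -1 \le \phi_{i,j}\le 1,\ \overline{\phi} = \beta_0,\ \overline{\phi_B} = \beta_{B,0},\ \overline{\phi_T} = \beta_{T,0}\}$. This set is nonempty (it contains $\phi^n$), closed and bounded, hence compact; and $F_h^n$ is continuous on it because $I(c) = (1+c)\ln(1+c)+(1-c)\ln(1-c)$ extends continuously to $c=\pm 1$ (with value $2\ln 2$), while every remaining term is a quadratic form in $\phi$. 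So a minimizer $\phi^\star\in\overline{A_h}$ exists; the entire difficulty is to show $\phi^\star$ lies in the \emph{open} set $A_h$, i.e.\ that no nodal value equals $\pm 1$.

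This positivity step is the main obstacle, and I would argue by contradiction. Suppose $\min_{i,j}\phi^\star_{i,j} = -1$ (the case $\max=+1$ being symmetric through $I'(c)\to +\infty$ as $c\to 1^-$). The subtlety is that an admissible variation must lie in the tangent space $H$, which, on unwinding the definition, forces the interior nodal sum, the bottom-row sum, and the top-row sum of the perturbation to vanish \emph{independently}; hence a node pinned at $-1$ can only be compensated by another node of the same type (interior, bottom, or top). I would therefore split into cases according to where the value $-1$ is attained and, in each case, produce a two-node perturbation $\psi\in H$ equal to $+1$ at the pinned node and to a negative constant at a compensating node of the same type. The existence of such a compensating node is guaranteed because the three mass constraints pin the interior sum, the bottom-row sum, and the top-row sum to fixed values, and since these data descend from a strictly interior state $\phi^n$ with $-1<\phi^n_{i,j}<1$, each of the corresponding regional averages lies strictly inside $(-1,1)$; thus not every node of a given type can equal $-1$, and a node with value $>-1$ is available. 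This also rules out the degenerate possibility that an entire region is pinned at $-1$, which would otherwise leave $A_h$ empty.

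With $\psi$ so chosen, $\phi^\star + \tau\psi\in\overline{A_h}$ for small $\tau>0$, and I would evaluate $\tfrac{d}{d\tau}F_h^n(\phi^\star+\tau\psi)\big|_{\tau=0^+}$ using the first-variation formula already computed in the proof of Proposition~\ref{prop:equiv-minimization}. Every non-logarithmic contribution stays bounded: the gradient and surface-diffusion terms (those involving $L_h$ and $-\Delta_h^x$) together with the concave linear $\theta_0$ terms are evaluated at the two finite nodes, while the smoothing terms $\cipgen{\psi}{L_h^{-1}(\phi^\star-\phi^n)}{\Omega}$ and their boundary analogues are controlled by the uniform bounds of Lemma~\ref{CHDBC-positivity-Lem-0} on $L_h^{-1}$ and $(-\Delta_h^x)^{-1}$. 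The only singular contribution is $\varepsilon^{-1}I'(\phi^\star)$ evaluated at the pinned node (together with its weighted boundary copy when that node lies on $\Gamma_B$ or $\Gamma_T$), and since $I'(c)=\ln(1+c)-\ln(1-c)\to -\infty$ as $c\to -1^+$, this forces the directional derivative to $-\infty$. Hence $F_h^n$ strictly decreases from $\phi^\star$ along $\psi$, contradicting minimality. Therefore $-1<\phi^\star_{i,j}<1$ at every node, i.e.\ $\phi^\star\in A_h$.

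Finally, since $\phi^\star$ minimizes over the affine-constrained open set $A_h$, its first variation vanishes for all directions $\psi\in H$, so $\phi^\star$ is a stationary point of $F_h^n$; Proposition~\ref{prop:equiv-minimization} then shows $\phi^\star$ solves the scheme~\eqref{scheme-CHDBC-1}--\eqref{scheme-CHDBC-8}, with the prescribed masses enforced by the conservation identities~\eqref{inner_mass}--\eqref{top_mass}. Uniqueness follows from strict convexity (Proposition~\ref{prop:convexity}): any scheme solution is, again by Proposition~\ref{prop:equiv-minimization}, a stationary point of the strictly convex $F_h^n$ on the convex set $A_h$, and a strictly convex functional admits at most one such point, which is its unique global minimizer. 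This yields $\phi^{n+1} = \mathop{\mathrm{argmin}}_{\phi\in A_h} F_h^n(\phi)$, completing the argument.
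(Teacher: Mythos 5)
Your proof is correct, and while it shares the paper's overall skeleton (compact minimization, then stationarity plus Proposition~\ref{prop:equiv-minimization} and strict convexity from Proposition~\ref{prop:convexity}), the core positivity step is handled by a genuinely different argument. The paper minimizes over the shrunken set $A_{h,\delta}$, where every term of $F_h^n$ is smooth, and rules out a minimizer with $\nrm{\phi^\star}_{\infty,\Omega}=1-\delta$ by \emph{quantitative} estimates: the logarithmic term contributes $\varepsilon^{-1}\ln\bigl(\delta/(2-\delta)\bigr)$, which must dominate $\delta$-independent constants ($C_3$, $C_4$, $C_5$) whose derivation needs Lemma~\ref{CHDBC-positivity-Lem-0}, the maximum-principle signs $L_h\phi^\star_{i_0,j_0}\le 0$ and $L_h\phi^\star_{i_1,j_1}\ge 0$ at the extremum points, and $\nrm{\phi^n}_\infty\le 1$; then $\delta$ is taken small. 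You instead minimize over the full closed box $\overline{A_h}$ (legitimate since $I$ extends continuously to $\pm1$) and use the purely \emph{qualitative} fact that the one-sided derivative of $I$ at a pinned node is $-\infty$ while every other term in the first variation is finite by finite-dimensionality alone — no uniform bounds, no sign arguments, no $\delta$-bookkeeping. A second real difference is the compensating node: the paper pairs the pinned minimum with the \emph{global maximum} of $\phi^\star$ (needed for its sign and average estimates), which forces mixed interior/boundary cases — and in the mixed case the paper's direction \eqref{CHDBC-positive-2-2} does not obviously preserve all three mass constraints, i.e., is not obviously in $H$ — whereas your same-type pairing (interior with interior, bottom row with bottom row, top row with top row), available because the three mass constraints separately fix the interior, bottom, and top sums at values strictly inside the extremes, keeps $\psi\in H$ automatically and collapses the case analysis to three identical cases. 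What the paper's route buys is that it never differentiates the entropy at a singular point; what yours buys is brevity and fewer technical estimates.

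Two small points you should make explicit to close the argument. First, if the compensating node happens to sit exactly at $+1$, decreasing it contributes a one-sided derivative that is again $-\infty$ (not $+\infty$), since $I'(c)\to+\infty$ as $c\to 1^-$ multiplies a negative perturbation; so the contradiction survives in that degenerate subcase. Second, the inference from ``one-sided directional derivative equals $-\infty$'' to ``$F_h^n$ strictly decreases along $\psi$'' should be justified, either by convexity of $\tau\mapsto F_h^n(\phi^\star+\tau\psi)$ on $[0,\tau_0]$ or directly from the difference quotient $\bigl(I(-1+\tau)-I(-1)\bigr)/\tau=\ln\tau+O(1)\to-\infty$, with all remaining difference quotients bounded. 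With these one-line additions your argument is complete and fully rigorous.
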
 

	\begin{proof} 
Recall that $F^n_h $ is a strictly convex functional over the domain $A_h$. Consider the following closed, convex domain: for $\delta \in (0,1/2)$, 
	\begin{equation}
A_{h,\delta} :=  \left\{ \phi \in A_h  \ \middle| \ -1 + \delta \le \phi_{i,j}   \le 1-\delta, \ 0 \le j \le N, \ i\in\mathbb{Z}  \right\}  .
	\end{equation} 
Since $A_{h,\delta}$ is a convex, compact set, there exists a (not necessarily unique) minimizer of $F^n_h$ over $A_{h,\delta}$. The key point of our positivity analysis is that such a minimizer could not occur at one of the boundary points of $A_{h,\delta}$, if $\delta$ is sufficiently small. 

Fix $\delta\in (0,1/2)$. Let us suppose that the minimizer $\phi^\star\in A_{h,\delta}$ of $F_h^n$ occurs at a boundary point of $A_{h,\delta}$, by which we mean that $\phi^\star\in A_h$ and 
	\[
\nrm{\phi^\star}_{\infty,\Omega} = 1-\delta.
	\]
Without loss of generality, we assume that
	\[
\phi^\star_{{i_0},{j_0}} = -1 + \delta,
	\]
for some grid point $({i_0}, {j_0})$. Suppose that $\phi^\star$ attains its maximum value at the point $({i_1}, {j_1})$. To start with, let us assume that neither $(i_0, j_0)$ nor $(i_1, j_1)$ appear on one of the physical boundary sections, $\Gamma_{B}$ or $\Gamma_{T}$. The physical boundary cases will be analyzed later. By the fact that $\overline{\phi^\star} = \beta_0$, it is obvious that $\phi^\star_{{i_1}, {j_1}} \ge \beta_0$. 

Consider the directional derivative: for any $\psi\in H$, 
	\begin{align}
& \hspace{-0.2in} \left.\frac{\diff F_h^n}{\diff\tau}\left(\phi+\tau \psi\right)\right|_{\tau = 0}  
	\nonumber
	\\
& =   \frac{1}{s} \cipgen{\psi}{L_h^{-1}(\phi - \phi^n)}{\Omega} + \frac{2}{h s} \cipgen{\psi_B}{(-\Delta_h^x)^{-1}(\phi_B - \phi^n_B)}{\Gamma}  + \frac{2}{h s} \cipgen{\psi_T}{(-\Delta^x_h)^{-1}(\phi_T - \phi^n_T)}{\Gamma}
	\nonumber
	\\
& \quad + \varepsilon^{-1}\left( \cipgen{I'(\phi)}{\psi}{\Omega}  + \frac{2}{h}   \cipgen{I'(\phi_B)}{\psi_B}{\Gamma}   + \frac{2}{h} \cipgen{I'(\phi_T)}{\psi_T}{\Gamma} \right)
	\nonumber   
	\\
& \quad + \varepsilon  \cipgen{\psi}{L_h\phi}{\Omega}   + \frac{\kappa}{h} \cipgen{\psi_B}{-\Delta_h^x\phi_B}{\Gamma}+\frac{\kappa}{h} \cipgen{\psi_T}{-\Delta_h^x\phi_T}{\Gamma} 
	\nonumber
	\\
& \quad - \varepsilon^{-1} \theta_0 \left( \cipgen{\phi^n}{\psi}{\Omega} + \frac{2}{h} \cipgen{\phi_B^n}{\psi_B}{\Gamma} + \frac{2}{h} \cipgen{\phi_T^n}{\psi_T}{\Gamma} \right) . 
	\label{CHDBC-positive-2}
	\end{align}	
Let us pick the direction $\psi \in H$, such that  
	\begin{equation} 
\psi_{i,j} = \delta_{i,i_0}\delta_{j,j_0} - \delta_{i,i_1}\delta_{j,j_1} , 
	\label{CHDBC-positive-2-2}
	\end{equation}  
where $\delta_{k,\ell}$ is the Kronecker delta function. The following equality is valid: 
	\begin{align}
\frac{1}{h^2}\diff_\tau \! \left.F^n_h(\phi^\star +\tau\psi ) \right|_{\tau=0} & = \frac{1}{s } \Big( L_h^{-1} ( \phi^\star-\phi^n )_{i_0,j_0} - L_h^{-1} ( \phi^\star-\phi^n )_{i_1,j_1} \Big)  
   	\nonumber 
	\\
& \quad  + \varepsilon^{-1}  \Big(  \ln ( 1+ \phi^\star_{i_0,j_0}) 
   - \ln ( 1- \phi^\star_{i_0,j_0} )  \Big) 
	\nonumber 
	\\
& \quad  + \varepsilon^{-1}  \Big( - \ln ( 1+ \phi^\star_{i_1,j_1} ) + \ln ( 1- \phi^\star_{i_1,j_1} )   \Big)   
	\nonumber 
	\\ 	
& \quad + \varepsilon ( L_h \phi^\star_{i_0,j_0}  - L_h \phi^\star_{i_1,j_1}  ) - \varepsilon^{-1} \theta_0 ( \phi^n_{i_0,j_0} - \phi^n_{i_1,j_1} )  .
	\label{CHDBC-positive-3} 
	\end{align}
Because of the fact that $\phi^\star_{{i_0}, {j_0}} = -1 + \delta$ and $\phi^\star_{{i_1},{j_1}}  \ge \beta_0$, we have  
	\begin{align*} 
\ln \left( 1+ \phi^\star_{i_0,j_0} \right)  -  \ln \left( 1 - \phi^\star_{i_0,j_0}\right) & = \ln (\delta) - \ln ( 2 - \delta_0 ) = \ln\left(\frac{\delta}{2-\delta}\right) , 
	\\
- \ln \left( 1- \phi^\star_{i_1,j_1} \right)  +  \ln \left( 1- \phi^\star_{i_1,j_1} \right) & \le - \ln (1+\beta_0) + \ln ( 1 - \beta_0 ) = \ln \left(  \frac{1 - \beta_0}{1+\beta_0} \right) . 
	\end{align*} 
Thus,
	\begin{align}
& \hspace{-0.75in} \varepsilon^{-1}\left(\ln \left( 1+ \phi^\star_{i_0,j_0} \right)  -  \ln \left( 1 - \phi^\star_{i_0,j_0}\right) - \ln \left( 1- \phi^\star_{i_1,j_1} \right)  +  \ln \left( 1- \phi^\star_{i_1,j_1} \right)\right)
	\nonumber
	\\
& \le \varepsilon^{-1} \ln\left(\frac{\delta}{2-\delta}\right) + \varepsilon^{-1} \ln \left(  \frac{1 - \beta_0}{1+\beta_0} \right).
	\label{CHDBC-positive-4} 
	\end{align}
For the first two terms appearing in~\eqref{CHDBC-positive-3}, an application of Lemma~\ref{CHDBC-positivity-Lem-0} indicates that 
	\begin{equation} 
- 2 C_1  \le L_h^{-1} ( \phi^\star-\phi^n )_{i_0,j_0} 
   - L_h^{-1} ( \phi^\star-\phi^n )_{i_1,j_1}  \le  2 C_1 .  
	\label{CHDBC-positive-5} 
	\end{equation}
Since $\phi^\star$ achieves its minimum value at the grid point $(i_0, j_0)$, with 
	\[
-1 + \delta  = \phi^\star_{i_0, j_0} \le \phi^\star_{i,j}
	\]
for any arbitrary grid point $(i,j)$, and its maximum value at the grid point $(i_1, j_1)$, with 
	\[
\phi^\star_{i,j} \le \phi^\star_{i_1, j_1} \le 1- \delta, 
	\]
for any $(i,j)$, we conclude that 
	\begin{equation*} 
L_h \phi^\star_{i_0,j_0} = - \Delta_h \phi^\star_{i_0,j_0} \le 0   \quad \mbox{and} \quad  L_h\phi^\star_{i_1,j_1} = -\Delta_h \phi^\star_{i_1,j_1} \ge 0 ,
	\end{equation*} 
which implies that 
	\begin{equation}
\varepsilon\left(L_h \phi^\star_{i_0,j_0} -  L_h\phi^\star_{i_1,j_1}\right) \le 0.
	\label{CHDBC-positive-6} 
	\end{equation}
For the numerical solution $\phi^n$ at the previous time step, a point-wise bound $\nrm{\phi^n}_\infty \le  1$ indicates that 
	\begin{equation} 
-\frac{2\theta}{\varepsilon} \le  \phi^n_{i_0, j_0} - \phi^n_{i_1, j_1} \le \frac{2\theta}{\varepsilon} .
	\label{CHDBC-positive-7} 
	\end{equation}
Combining substitution of~\eqref{CHDBC-positive-4} -- \eqref{CHDBC-positive-7} into~\eqref{CHDBC-positive-3} yields 
	\begin{equation} 
\frac{1}{h^2}\diff_\tau \! \left.F_h^n(\phi^\star +\tau\psi )\right|_{\tau=0} \le \frac{1}{\varepsilon} \left(\ln\left(\frac{\delta}{2-\delta}\right) +  \ln \left(  \frac{1 - \beta_0}{1+\beta_0} \right)\right) + 2 \frac{C_1}{ s}  + 2 \frac{\theta_0}{\varepsilon}  .  
	\label{CHDBC-positive-8} 
	\end{equation}  
Define the following quantity: 
	\begin{equation} 
C_3 := \frac{1}{\varepsilon}  \ln \left(  \frac{1 - \beta_0}{1+\beta_0} \right) + 2 \frac{C_1}{ s}  + 2 \frac{\theta_0}{\varepsilon}  .
	\label{eqn:C_3} 
\end{equation} 
It is obvious that $C_3$ is a fixed, finite number, provided the time step size $s>0$ and the interface parameter $\varepsilon>0$ are fixed, though the parameter blows up to infinity, as $s,\varepsilon \searrow 0$. For fixed $s$ and $\varepsilon$, we may choose $1/2> \delta>0$ small enough so that 
	\begin{equation} 
\frac{1}{\varepsilon} \ln\left( \frac{\delta}{2+\delta}\right)  + C_3 < 0 .  
	\label{CHDBC-positive-9} 
	\end{equation} 
This in turn reveals that 
	\begin{equation} 
\diff_\tau \! \left.F^n_h(\phi^\star +\tau\psi )\right|_{\tau=0}  \le \frac{1}{\varepsilon} \ln\left( \frac{\delta}{2+\delta}\right)  + C_3 < 0< 0 .  
	\label{CHDBC-positive-10} 
	\end{equation} 
If $\delta$ is sufficiently small, such an inequality contradicts the assumption that $F^n_h$ has a minimum at $\phi^\star$, since the directional derivative is negative in a direction pointing into the interior of $A_{h,\delta}$. 

In the case that the minimum is achieved at one of the physical boundaries, the argument is a little more involved. Let us  assume the minimizer is given by $\phi^\star$, with $\phi^\star_{{i_0},0} = -1 + \delta$, at the physical boundary grid point $({i_0}, 0) \in \Gamma_{B}$. Meanwhile, we suppose that $\phi^\star$ attains its maximum value over the bottom boundary section, at the boundary point $({i_1}, 0)$. As usual, we denote by $\phi^\star_B$, $\phi^\star_T$  the point-wise projections of $\phi^\star$ onto $\mathcal{V}_{\mathrm{p},x}(\Gamma_{B})$ and $\mathcal{V}_{\mathrm{p},x}(\Gamma_{T})$, respectively. Because of the fact that $\overline{\phi^\star_B}  = \beta_{B,0}$, it is obvious that $\phi^\star_{{i_1}, 0} \ge \beta_{B,0}$. 

We use the direction (test) function $\psi \in H$ again as defined  in~\eqref{CHDBC-positive-2-2}, but with $j_0 = j_1 =0$. Subsequently, the directional derivative becomes 
	\begin{align}
\frac{1}{h^2}\diff_\tau \! \left.F^n_h(\phi^\star +\tau\psi ) \right|_{\tau=0} & = \frac{1}{2s} \Big( L_h^{-1} ( \phi^\star-\phi^n )_{i_0,0} -  L_h^{-1} ( \phi^\star-\phi^n )_{i_1,0} \Big)  
   	\nonumber 
	\\
& \quad + \frac{2}{h s } \Big(  (- \Delta_h^x)^{-1} ( \phi^\star_B - \phi^n_B )_{i_0}  - (- \Delta_h^x)^{-1} ( \phi^\star_B - \phi^n_B )_{i_1} \Big)     
	\nonumber 	
	\\
& \quad  + \varepsilon^{-1}  ( \frac12 + 2h^{-1} ) \Big(  \ln ( 1+ \phi^\star_{i_0,0}) - \ln ( 1- \phi^\star_{i_0,0}  )  \Big) 
	\nonumber 
	\\
& \quad + \varepsilon^{-1}  ( \frac12+  2h^{-1} ) \Big( - \ln ( 1+ \phi^\star_{i_1,0}  ) + \ln ( 1- \phi^\star_{i_1,0}  ) \Big)
	\nonumber 
	\\ 	
& \quad + \frac{\varepsilon}{2} ( L_h \phi^\star_{i_0,0}  - L_h \phi^\star_{i_1,0}  ) - \kappa h^{-1} ( \Delta_h^x \phi^\star_{i_0}  - \Delta_h^x \phi^\star_{i_1} )    
	\nonumber 
	\\
& \quad - \varepsilon^{-1} \theta_0 ( \frac12 + 2h^{-1} ) ( \phi^n_{i_0,0} - \phi^n_{i_1,0} )  .
	\label{CHDBC-positive-11-1} 
	\end{align}
In comparison with~\eqref{CHDBC-positive-3}, in which the minimum value point $(i_0, j_0)$ is located within the interior of the grid, a distinguishing feature of the directional derivative~\eqref{CHDBC-positive-11-1} is the coefficient in front of the singular logarithmic terms, which is $\varepsilon^{-1}  ( \frac12+ 2h^{-1} )$, instead of $\varepsilon^{-1}$ in~\eqref{CHDBC-positive-3}. In either case, the corresponding coefficients are positive, and this crucial fact will play an essential role in the positivity-preserving analysis. 

Similar to the inequalities~\eqref{CHDBC-positive-4} -- \eqref{CHDBC-positive-7}, the following estimates are valid:  
	\begin{align} 
& \hspace{-0.5in} \frac{1/2 + 2/h}{\varepsilon}\left( \ln ( 1+ \phi^\star_{i_0,0} )  -  \ln ( 1+ \phi^\star_{i_0,0}  ) - \ln ( 1- \phi^\star_{i_0,0}  )  +  \ln ( 1- \phi^\star_{i_1,0}  )\right) 
	\\
& \le  \frac{1/2 + 2/h}{\varepsilon}\left(  \ln\left(\frac{\delta}{2-\delta}\right) +  \ln \left(  \frac{1 - \beta_{B,0}}{1+\beta_{B,0}} \right)\right)
	\label{CHDBC-positive-12-2}
	\end{align}
	\begin{equation}
-  \frac{C_1}{s}  \le \frac{1}{2 s}\left(  L_h^{-1} ( \phi^\star-\phi^n )_{i_0,0} -  L_h^{-1} ( \phi^\star-\phi^n )_{i_1,0} \right) \le  \frac{C_1}{s} ,   
	\label{CHDBC-positive-12-3} 
	\end{equation}
	\begin{equation}
- 2 \frac{C_2}{hs}  \le \frac{1}{hs} \left( (- \Delta_h^x)^{-1} ( \phi^\star_B - \phi^n_B )_{i_0} - (- \Delta_h^x)^{-1} ( \phi^\star_B - \phi^n_B )_{i_1} \right) \le  2 \frac{C_2}{hs} ,   
	\label{CHDBC-positive-12-4} 
	\end{equation}	
	\begin{equation}
\frac{\varepsilon}{2} \left( L_h\phi^\star_{i_0,0} -L_h \phi^\star_{i_1,0} \right) \le 2 h^{-2} ,
	\label{CHDBC-positive-12-5} 
	\end{equation}
	\begin{equation} 
 \frac{\kappa}{h} \left(- \Delta_h^x \phi^\star_{i_0}  + \Delta_h^x \phi^\star_{i_1} \right) \le 0 , 
	\label{CHDBC-positive-12-6} 	
	\end{equation}
and, finally, 
	\begin{equation}
- \frac{\theta_0 ( 1 + 4/h)}{\varepsilon} \le  \frac{\theta_0 ( 1/2 + 2/h)}{\varepsilon}\left( \phi^n_{i_0,0} - \phi^n_{i_1,0} \right) \le  \frac{\theta_0 ( 1 + 4/h)}{\varepsilon}.
	\label{CHDBC-positive-12-7} 
	\end{equation}
Subsequently,  a substitution of~\eqref{CHDBC-positive-12-2} -- \eqref{CHDBC-positive-12-7} into~\eqref{CHDBC-positive-11-1} results in
	\begin{equation} 
\frac{1}{h^2}\diff_\tau \! \left.F_h^n(\phi^\star +\tau\psi )\right|_{\tau=0} \le \frac{1/2+2/h}{\varepsilon} \ln\left( \frac{\delta}{2+\delta}\right)  + C_4 , 
	\label{CHDBC-positive-13} 
	\end{equation}
where
	\[
C_4 :=  \frac{ 1/2 + 2/h }{\varepsilon}  \ln \left( \frac{1- \beta_{B,0}}{1+\beta_{B,0}}  \right) +  \frac{ C_1 + 4  C_2/h }{s}  + \frac{2 \varepsilon}{h^2} +  \theta_0 \frac{ 1 + 4/h }{\varepsilon}   .  
	\] 
As before, $C_4$  a constant for fixed $s >0$, $h>0$, and $\varepsilon>0$, though it is  singular, as $s, h, \varepsilon  \searrow 0$. Of course, for any fixed $s$, $\varepsilon$, and $h$, the value of $\delta>0$ could be chosen to be sufficiently small so that 
	\begin{equation} 
\frac{1}{h^2}\diff_\tau \! \left.F_h^n(\phi^\star +\tau\psi )\right|_{\tau=0} \le \frac{1/2+2/h}{\varepsilon} \ln\left( \frac{\delta}{2+\delta}\right)  + C_4 < 0.
	\label{CHDBC-positive-14} 
	\end{equation}
Again, this inequality contradicts the assumption that $F^n_h$ has a minimum at $\phi^\star$, since the directional derivative is negative in a direction pointing into the interior of $A_{h,\delta}$. 

Using a similar analysis, we are able to prove that, if the minimizer is given by $\phi^\star$, with $\phi^\star_{{i_0},N}=-1 + \delta$, at a boundary grid point $({i_0}, N) \in \Gamma_{T}$, the same inequality~\eqref{CHDBC-positive-10} becomes available, which in turn leads to a contradiction. 

One more case needs to be taken into consideration. If the minimum value of the minimizer function occurs at one interior point, with $\phi^\star_{i_0, j_0} = -1 + \delta$, while its maximum value is achieved at one boundary point, say $(i_1, 0)$, without loss of generality. Again, an obvious bound becomes available, $\phi^\star_{i_1 ,0} \ge \beta_0$, due to the fact that $\overline{\phi^\star} =\beta_0$. In turn, a careful evaluation of~\eqref{CHDBC-positive-2}, with the direction $\psi$ given by~\eqref{CHDBC-positive-2-2}, gives 
	\begin{align}
\frac{1}{h^2}\diff_\tau \! \left.F^n_h(\phi^\star +\tau\psi ) \right|_{\tau=0} & = \frac{1}{s } \Big( L_h^{-1} ( \phi^\star-\phi^n )_{i_0,j_0} - \frac12 L_h^{-1} ( \phi^\star-\phi^n )_{i_1,0} \Big)  
   	\nonumber 
	\\
& \quad   - \frac{2}{hs} (- \Delta_h^x)^{-1} ( \phi_B^\star - \phi_B^n )_{i_1}  
 + \varepsilon^{-1}  \Big(  \ln ( 1+ \phi^\star_{i_0,j_0}) 
   - \ln ( 1- \phi^\star_{i_0,j_0} )  \Big) 
	\nonumber 
	\\
& \quad  + \varepsilon^{-1}  ( \frac12 + 2h^{-1} ) \Big( - \ln ( 1+ \phi^\star_{i_1, 0} ) + \ln ( 1- \phi^\star_{i_1, 0} )   \Big)   
	\nonumber 
	\\ 	
& \quad + \varepsilon ( L_h \phi^\star_{i_0,j_0}  - \frac12 L_h \phi^\star_{i_1, 0}  ) 
 + \kappa h^{-1}  \Delta_h^x \phi^\star_{i_1}     
	\nonumber 
	\\
& \quad 
  - \varepsilon^{-1} \theta_0  \phi^n_{i_0,j_0} 
  + \varepsilon^{-1} \theta_0 ( \frac12 + 2h^{-1} )  \phi^n_{i_1,0}   .
	\label{CHDBC-positive-15} 
	\end{align} 
Using similar estimates as in the first two cases, we are able to prove that 
	\begin{equation} 
\frac{1}{h^2}\diff_\tau \! \left.F_h^n(\phi^\star +\tau\psi )\right|_{\tau=0} \le \varepsilon^{-1} \ln\left( \frac{\delta}{2+\delta}\right)  + C_5 < 0 , 
	\label{CHDBC-positive-15-2} 
	\end{equation}
in which $C_5$ stands for another constant for fixed $s >0$, $h>0$ and $\varepsilon >0$. The technical details are skipped for the sake of brevity. Of course, such an inequality contradicts the assumption that $F^n_h$ has a minimum at $\phi^\star$, since the directional derivative is negative in a direction pointing into the interior of $A_{h,\delta}$. 

As a result, global minimum of $F^n_h$ over $A_{h,\delta}$ could not occur at a boundary point $\phi^\star$ such that  $\phi^\star_{i_0,j_0}  = 1-\delta$, for some $(i_0, j_0)$, so that the grid function $\phi^\star$ has a global maximum at $(i_0, j_0)$. The analysis follows similar ideas as outlined above, and the details are left to interested readers. 

Therefore, the global minimum of $F^n_h$ over $A_{h,\delta}$ could only possibly occur at an interior point, for $\delta>0$ sufficiently small. Since $F^n_h$ is a smooth function, we conclude that there must be a solution $\phi \in A_{h, \delta}$ (provided that $\delta$ is sufficiently small), so that
	\begin{equation} 
\diff_\tau \! \left.F^n_h(q +\tau\psi )\right|_{\tau=0} =0 ,  \quad \forall \, \psi \in H,
	\label{CHDBC-positive-16} 
	\end{equation} 
which is equivalent to the numerical solution of~\eqref{scheme-CHDBC-rewrite-1}.  Therefore, there exists a numerical solution to~\eqref{scheme-CHDBC-rewrite-1}, over the compact domain $A_{h,\delta} \subset A_{h}$, with point-wise positive values for $1 + \phi^{n+1}$ and $1 - \phi^{n+1}$. The existence of a positive numerical solution is established. 

Meanwhile, since $F_h^n$ is a strictly convex function over $A_h$, the uniqueness analysis for this numerical solution (over the open set $A_h$) is straightforward, following a convexity analysis. The proof of Theorem~\ref{CHDBC-positivity} is complete.  
	\end{proof} 
	
	\begin{rem} 
 The positivity preserving analysis used herein has been successfully applied to various gradient flow models, such as the Cahn-Hilliard equation with Flory-Huggins energy potential~\cite{chen22b, chen22a, chen19b, dong19b, Dong2021a, Dong2022a, dong20a, Yuan2021a, Yuan2022a}, the liquid film droplet model~\cite{ZhangJ2021}, the Poisson-Nernst-Planck system~\cite{LiuC2021a, LiuC2022a, QianWangZhou_JCP20}, the reaction-diffusion system with detailed balance~\cite{LiuC2021b, LiuC2022c, LiuC2022b}, etc. In these works, the convex nature of the energy functional associated with singular term has played an essential role. This feature prevents the numerical solution approach the singular limit values of $-1$ and 1,  which turns out to be the key point in the analysis. 
\end{rem} 
 
	\section{Total energy stability analysis}
	\label{sec:energy stability}

With the positivity-preserving and unique solvability properties for the numerical scheme~\eqref{scheme-CHDBC-1} -- \eqref{scheme-CHDBC-8} established, a stability analysis for the total energy could be established. The following discrete energy is introduced: 
	\begin{align} 
E_h (\phi) & := \varepsilon^{-1} \Big( \cipgen{I(\phi)}{1}{\Omega}+ \cipgen{I(\phi_B)}{1}{\Gamma}  + \cipgen{I(\phi_T}{1}{\Gamma} \Big)  - \frac{\theta_0}{2\varepsilon} \Big( \| \phi \|_{2,\Omega}^2 + \| \phi_B \|_{2,\Gamma}^2 + \| \phi_T \|_{2,\Gamma}^2  \Big) 
  	\nonumber
  	\\
& \quad  +  \frac{\varepsilon}{2} \cipgen{\phi}{L_h\phi}{\Omega} - \frac{\kappa}{2} \Big( \cipgen{\phi_B}{\Delta_h^x\phi_B}{\Gamma} 
  + \cipgen{\phi_T}{\Delta_h^x\phi_T}{\Gamma}   \Big) ,
	\label{CHDBC-discrete energy}
	\end{align} 
where 
	\[
I(\phi) = (1+\phi)\ln(1+\phi) + (1-\phi)\ln(1-\phi).	
	\]

	\begin{thm}
	\label{CHDBC-energy stability} 
For any time step size $s>0$, the numerical solution of \eqref{scheme-CHDBC-1} -- \eqref{scheme-CHDBC-8} satisfies 
	\begin{equation} 
  E_h (\phi^{n+1} ) 
+ s  ( \| \nabla_h \mu^{n+1} \|_{2,\Omega}^2 +  \| D_x \mu_B^{n+1} \|_{2,\Gamma}^2 +\| D_x \mu_T^{n+1} \|_{2,\Gamma}^2 ) \le E_h (\phi^n)   ,   
	\label{CHDBC-energy-0} 
	\end{equation} 
so that $E_h (\phi^n) \le E_h (\phi^0)$, for all $n \in \mathbb{N}$. 
	\end{thm}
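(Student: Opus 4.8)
The plan is to reproduce, at the fully discrete level, the continuous dissipation computation that led to \eqref{energy dissipation-3}. The heart of the matter is the \emph{variational inequality}
\begin{align*}
E_h(\phi^{n+1}) - E_h(\phi^n) & \le \cipgen{\mu^{n+1}}{\phi^{n+1}-\phi^n}{\Omega} \\
& \quad + \cipgen{\mu_B^{n+1}}{\phi_B^{n+1}-\phi_B^n}{\Gamma} + \cipgen{\mu_T^{n+1}}{\phi_T^{n+1}-\phi_T^n}{\Gamma} ,
\end{align*}
after which each pairing on the right is converted into a nonpositive dissipation term using the evolution equations and summation by parts. Throughout, Theorem~\ref{CHDBC-positivity} guarantees $-1 < \phi^n_{i,j}, \phi^{n+1}_{i,j} < 1$, so that every logarithmic quantity and its derivative is well defined.

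To prove the variational inequality I split $E_h = E_h^{\rm c} - E_h^{\rm e}$. The convex part $E_h^{\rm c}$ collects the entropic terms $\varepsilon^{-1}\cipgen{I(\phi)}{1}{\Omega}$, $\varepsilon^{-1}\cipgen{I(\phi_B)}{1}{\Gamma}$, $\varepsilon^{-1}\cipgen{I(\phi_T)}{1}{\Gamma}$, the bulk gradient term $\frac{\varepsilon}{2}\cipgen{\phi}{L_h\phi}{\Omega}$, and the two surface diffusion terms $\frac{\kappa}{2}\cipgen{\phi_B}{-\Delta_h^x\phi_B}{\Gamma}$ and $\frac{\kappa}{2}\cipgen{\phi_T}{-\Delta_h^x\phi_T}{\Gamma}$; the convex expansive part is $E_h^{\rm e} = \frac{\theta_0}{2\varepsilon}(\nrm{\phi}_{2,\Omega}^2 + \nrm{\phi_B}_{2,\Gamma}^2 + \nrm{\phi_T}_{2,\Gamma}^2)$. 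Convexity of $I$, positivity of $L_h$, and positivity of $-\Delta_h^x$ render $E_h^{\rm c}$ convex. Combining the first-order convexity inequality for $E_h^{\rm c}$ evaluated at $\phi^{n+1}$ with that for $E_h^{\rm e}$ evaluated at $\phi^n$ (the latter producing precisely the explicit term $-\theta_0\phi^n$ appearing in \eqref{scheme-CHDBC-2}, \eqref{scheme-CHDBC-6}, and \eqref{scheme-CHDBC-8}), I obtain
\[
E_h(\phi^{n+1}) - E_h(\phi^n) \le \sum_{i,j} \frac{\partial E_h}{\partial \phi_{i,j}} (\phi^{n+1}-\phi^n)_{i,j} ,
\]
where the convex terms are differentiated at $\phi^{n+1}$ and the expansive term at $\phi^n$. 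It remains to identify these partial derivatives with the discrete chemical potentials.

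At an interior node $1 \le j \le N-1$ one has $L_h = -\Delta_h$ and weight $w_j = 1$, so $\partial E_h/\partial\phi_{i,j} = h^2 \mu^{n+1}_{i,j}$ directly from \eqref{scheme-CHDBC-2}. The delicate case is the boundary rows, where $w_0 = w_N = \frac12$ and $L_h \ne -\Delta_h$. Invoking the pointwise identity $-\Delta_h\phi_{i,0} = L_h\phi_{i,0} + \frac{2}{h}\tilde{D}_y\phi_{i,0}$ recorded in the proof of Proposition~\ref{prop:equiv-minimization}, the boundary variation of $\frac{\varepsilon}{2}\cipgen{\phi}{L_h\phi}{\Omega}$ produces, besides the half-weighted bulk contribution, exactly the normal-derivative coupling $-\varepsilon\tilde{D}_y\phi^{n+1}_{\star,0}$. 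Assembled with the full-weight surface entropic, diffusion, and explicit expansive contributions, this reconstructs the surface chemical potential \eqref{scheme-CHDBC-6}, giving $\partial E_h/\partial\phi_{i,0} = \frac{h^2}{2}\mu^{n+1}_{i,0} + h \mu^{n+1}_{B,i}$. Pairing with $(\phi^{n+1}-\phi^n)_{i,0}$ therefore supplies both the half-weighted $j=0$ part of $\cipgen{\mu^{n+1}}{\phi^{n+1}-\phi^n}{\Omega}$ and the full surface pairing $\cipgen{\mu_B^{n+1}}{\phi_B^{n+1}-\phi_B^n}{\Gamma}$; the top row is handled identically, the opposite-signed coupling $+\varepsilon\tilde{D}_y\phi^{n+1}_{\star,N}$ matching \eqref{scheme-CHDBC-8}. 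Summing over all nodes yields the variational inequality.

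Finally, I substitute $\phi^{n+1}-\phi^n = s\Delta_h\mu^{n+1}$ from \eqref{scheme-CHDBC-1} and apply the summation-by-parts formula of Lemma~\ref{lemma1} together with the homogeneous Neumann condition \eqref{scheme-CHDBC-3}, whose boundary contributions vanish, to deduce $\cipgen{\mu^{n+1}}{\phi^{n+1}-\phi^n}{\Omega} = -s\nrm{\nabla_h\mu^{n+1}}_{2,\Omega}^2$. The one-dimensional periodic summation by parts applied to \eqref{scheme-CHDBC-5} and \eqref{scheme-CHDBC-7} gives $\cipgen{\mu_B^{n+1}}{\phi_B^{n+1}-\phi_B^n}{\Gamma} = -s\nrm{D_x\mu_B^{n+1}}_{2,\Gamma}^2$ and the analogous identity on the top boundary. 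Inserting these into the variational inequality produces \eqref{CHDBC-energy-0}, and telescoping over the time index gives $E_h(\phi^n) \le E_h(\phi^0)$. I expect the main obstacle to be the boundary bookkeeping inside the variational inequality: one must carefully track the half-weights $w_0, w_N$ and peel the coupling term $\pm\varepsilon\tilde{D}_y\phi$ off $L_h$, so that the shared value $\phi_{i,0} = \phi_{B,i}$ feeds the weighted bulk pairing and the unweighted surface pairing at the same time.
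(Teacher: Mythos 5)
Your proof is correct, but it is organized along a genuinely different route than the paper's. The paper argues bottom-up from the scheme: it pairs \eqref{scheme-CHDBC-1} with $\mu^{n+1}$, expands $\mu^{n+1}$ via \eqref{scheme-CHDBC-2}, applies the summation-by-parts identity \eqref{lemma 1-0-3} to the term $-\varepsilon\cipgen{\phi^{n+1}-\phi^n}{\Delta_h\phi^{n+1}}{\Omega}$, and then eliminates the resulting boundary terms $\pm\varepsilon\cipgen{\tilde D_y\phi^{n+1}_{\star,\cdot}}{\phi^{n+1}_{\star,\cdot}-\phi^n_{\star,\cdot}}{\Gamma}$ using the representation formulas \eqref{scheme-CHDBC-rewrite-2}--\eqref{scheme-CHDBC-rewrite-3}; that is where the surface chemical potentials and, after invoking \eqref{scheme-CHDBC-5} and \eqref{scheme-CHDBC-7}, the surface dissipation terms enter. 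You argue top-down from the energy: you first prove the monolithic variational inequality $E_h(\phi^{n+1})-E_h(\phi^n)\le\cipgen{\mu^{n+1}}{\phi^{n+1}-\phi^n}{\Omega}+\cipgen{\mu_B^{n+1}}{\phi_B^{n+1}-\phi_B^n}{\Gamma}+\cipgen{\mu_T^{n+1}}{\phi_T^{n+1}-\phi_T^n}{\Gamma}$ by convex splitting, whose crux is the nodal gradient identification $\partial E_h/\partial\phi_{i,0}=\frac{h^2}{2}\mu^{n+1}_{i,0}+h\mu^{n+1}_{B,i}$ (with convex pieces differentiated at $\phi^{n+1}$ and the expansive piece at $\phi^n$, so that exactly the scheme's mixed-time potentials appear), resting on the pointwise identity $-\Delta_h\phi_{i,0}=L_h\phi_{i,0}+\frac{2}{h}\tilde D_y\phi_{i,0}$ --- a device the paper deploys only in the solvability analysis (Proposition~\ref{prop:equiv-minimization}), never in its energy proof. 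I verified your bookkeeping: the half-weights $w_0=w_N=\frac12$ in $\cipgen{\cdot}{\cdot}{\Omega}$ and the factor $h$ in $\cipgen{\cdot}{\cdot}{\Gamma}$ make the claimed identity exact, and the couplings $-\varepsilon\tilde D_y\phi^{n+1}_{\star,0}$ and $+\varepsilon\tilde D_y\phi^{n+1}_{\star,N}$ carry precisely the signs required by \eqref{scheme-CHDBC-6} and \eqref{scheme-CHDBC-8}. The endgame --- substituting the three evolution equations, using \eqref{scheme-CHDBC-3} to annihilate the boundary terms in the bulk summation by parts, and periodic summation by parts on $\Gamma$ --- coincides with the paper's. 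Your organization buys a symmetric treatment of bulk and surface that directly mirrors the continuous computation \eqref{energy dissipation-3}, and it reuses the gradient structure already verified for $F_h^n$; the paper's organization avoids nodal differentiation of $E_h$ altogether, needing only Lemma~\ref{lemma1} plus substitution. In a full write-up you should state explicitly that the differentiation of the quadratic terms uses the symmetry of $L_h$ and $-\Delta_h^x$ with respect to the weighted inner products, and that the bulk equation \eqref{scheme-CHDBC-1} is being invoked at the boundary rows $j=0$ and $j=N$ as well, which the scheme indeed supplies.
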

	\begin{proof} 
Taking a discrete inner product of~\eqref{scheme-CHDBC-1} with $\mu^{n+1}$ gives   
	\begin{align} 
 \langle \phi^{n+1} - \phi^n , \mu^{n+1} \rangle_\Omega &= \cipgen{  \phi^{n+1} - \phi^n}{ \varepsilon^{-1} \left( I'(\phi^{n+1}) - \theta_0 \phi^n\right) - \varepsilon \Delta_h \phi^{n+1}}{\Omega} 
 	\nonumber
 	\\
 & = s\cipgen{\mu^{n+1}}{\Delta_h\mu^{n+1}}{\Omega}
 	\nonumber 
	\\
& = - s  \| \nabla_h \mu^{n+1} \|_{2,\Omega}^2  ,  
	\label{CHDBC-energy-1}      
	\end{align} 
in which the homogeneous discrete Neumann boundary conditions for $\mu^{n+1}$ have been applied. Next, the convexity of $I(\phi)$, implies that 
	\begin{equation} 
\cipgen{\phi^{n+1} - \phi^n}{I'(\phi^{n+1})}{\Omega}  \ge \cipgen{I(\phi^{n+1})}{1}{\Omega} -  \cipgen{I(\phi^n)}{1}{\Omega}
	\label{CHDBC-energy-2-2}  
	\end{equation}	
	\begin{equation} 
\cipgen{\phi_B^{n+1} - \phi_B^n}{I'(\phi_B^{n+1})}{\Gamma}  \ge \cipgen{I(\phi_B^{n+1})}{1}{\Gamma} -  \cipgen{I(\phi_B^n)}{1}{\Gamma}
	\label{CHDBC-energy-2-2-B}  
	\end{equation}
	\begin{equation} 
\cipgen{\phi_T^{n+1} - \phi_T^n}{I'(\phi_T^{n+1})}{\Gamma}  \ge \cipgen{I(\phi_T^{n+1})}{1}{\Gamma} -  \cipgen{I(\phi_T^n)}{1}{\Gamma}
	\label{CHDBC-energy-2-2-T}  
	\end{equation}
	\begin{equation}
\cipgen{\phi^{n+1} - \phi^n}{- \phi^n}{\Omega} \ge  - \frac12 ( \| \phi^{n+1} \|_{2,\Omega}^2 - \| \phi^n \|_{2,\Omega}^2 ) .
	\label{CHDBC-energy-2-3}
	\end{equation}
	\begin{equation}
\cipgen{\phi_B^{n+1} - \phi_B^n}{- \phi_B^n}{\Gamma} \ge  - \frac12 ( \| \phi_B^{n+1} \|_{2,\Gamma}^2 - \| \phi_B^n \|_{2,\Gamma}^2 ) .
	\label{CHDBC-energy-2-3-B}
	\end{equation}
	\begin{equation}
\cipgen{\phi_T^{n+1} - \phi_T^n}{- \phi_T^n}{\Gamma} \ge  - \frac12 ( \| \phi_T^{n+1} \|_{2,\Gamma}^2 - \| \phi_T^n \|_{2,\Gamma}^2 ) .
	\label{CHDBC-energy-2-3-T}
	\end{equation}
Meanwhile, for the surface diffusion term, an application of the summation-by-parts identity~\eqref{lemma 1-0-3} (in Lemma~\ref{lemma1}) indicates that 
	\begin{align} 
- \cipgen{\phi^{n+1} - \phi^n}{\Delta_h \phi^{n+1} }{\Omega} & =  \eipvec{\nabla_h ( \phi^{n+1} - \phi^n ) }{ \nabla_h\phi^{n+1}}_{\Omega}  
	\nonumber 
	\\
& \quad - \cipgen{ \tilde{D}_{y} \phi^{n+1}_{\star, N}}{ \phi^{n+1}_{\star,N} - \phi^n_{\star,N}}{\Gamma} + \cipgen{\tilde{D}_{y} \phi^{n+1}_{\star, 0}}{ \phi^{n+1}_{\star,0} - \phi^n_{\star,0}}{\Gamma} .      
	\label{CHDBC-energy-3-1} 
	\end{align} 
The estimate for the first term on the right hand side of~\eqref{CHDBC-energy-3-1} comes from the convexity of $\| \nabla_h \phi \|_{2,\Omega}^2$: 
	\begin{equation}
\eipvec{\nabla_h ( \phi^{n+1} - \phi^n ) }{ \nabla_h\phi^{n+1}}_\Gamma \ge  \frac12 \left( \| \nabla_h \phi^{n+1} \|_{2,\Omega}^2 - \| \nabla_h \phi^n \|_{2,\Omega}^2 \right) . 
	\label{CHDBC-energy-3-2}
	\end{equation}
For the boundary terms appearing in~\eqref{CHDBC-energy-3-1}, we recall the representation formula~\eqref{scheme-CHDBC-rewrite-2} and obtain 	
	\begin{align}  
\varepsilon \cipgen{\tilde{D}_{y} \phi^{n+1}_{\star, 0}}{ \phi^{n+1}_{\star,0} - \phi^n_{\star,0}}{\Gamma} &	= \varepsilon \cipgen{\tilde{D}_{y} \phi^{n+1}_{\star, 0}}{ \phi^{n+1}_B - \phi^n_B}{\Gamma}   
	\nonumber 
	\\        
& = \frac{1}{s } \cipgen{( - \Delta_h^x )^{-1} ( \phi^{n+1}_B - \phi^n_B )}{\phi^{n+1}_B - \phi^n_B}{\Gamma}    
	\nonumber 
	\\
& \quad + \cipgen{ \varepsilon^{-1} ( I'(\phi^{n+1}_T) - \theta_0 \phi^n_T ) - \kappa \Delta_h^x \phi^{n+1}_T}{ \phi^{n+1}_T - \phi^n_T}{\Gamma} . 	
	 \label{CHDBC-energy-3-3-B}	
	\end{align}
Similarly,
	\begin{align}  
-\varepsilon \cipgen{\tilde{D}_{y} \phi^{n+1}_{\star, N}}{ \phi^{n+1}_{\star,N} - \phi^n_{\star,N}}{\Gamma} & = - \varepsilon \cipgen{\tilde{D}_{y} \phi^{n+1}_{\star,N}}{ \phi^{n+1}_T - \phi^n_T}{\Gamma}   
	\nonumber 
	\\        
& = \frac{1}{s} \cipgen{( - \Delta_h^x )^{-1} ( \phi^{n+1}_T - \phi^n_T )}{\phi^{n+1}_T - \phi^n_T}{\Gamma}    
	\nonumber 
	\\
& \quad + \cipgen{ \varepsilon^{-1} ( I'(\phi^{n+1}_T) - \theta_0 \phi^n_T)  - \kappa \Delta_h^x \phi^{n+1}_T}{ \phi^{n+1}_T - \phi^n_T}{\Gamma} . 	
	 \label{CHDBC-energy-3-3-T}	
	\end{align}

We also have
	\begin{align} 
\frac{1}{s } \cipgen{ ( - \Delta_h^x )^{-1} ( \phi^{n+1}_B - \phi^n_B ) }{  \phi^{n+1}_B - \phi^n_B}{\Gamma} & =  - s  \cipgen{ \mu_B^{n+1}}{\Delta_h^x \mu_B^{n+1}}{\Gamma}
	\nonumber 
	\\
& = s  \| D_x \mu_B^{n+1} \|_{2, \Gamma}^2 , 
	\label{CHDBC-energy-3-4-B}
	\end{align}
	\begin{equation} 
\cipgen{\phi^{n+1}_B - \phi^n_B}{ - \Delta_h^x \phi^{n+1}_B }{\Gamma}  \ge \frac12 ( \| D_x \phi^{n+1}_B \|_{2, \Gamma}^2 
	 - \| D_x \phi^n_B  \|_{2 ,\Gamma}^2 ) , 
	 	\label{CHDBC-energy-3-8-B}	 
	 \end{equation}  
and similarly,
	\begin{align} 
\frac{1}{s} \cipgen{ ( - \Delta_h^x )^{-1} ( \phi^{n+1}_T - \phi^n_T ) }{  \phi^{n+1}_T - \phi^n_T}{\Gamma} & =  - s  \cipgen{ \mu_T^{n+1}}{\Delta_h^x \mu_B^{n+1}}{\Gamma}
	\nonumber 
	\\
& = s  \| D_x \mu_T^{n+1} \|_{2, \Gamma}^2 , 
	\label{CHDBC-energy-3-4-T}
	\end{align}
	\begin{equation} 
\cipgen{\phi^{n+1}_T - \phi^n_T}{ - \Delta_h^x \phi^{n+1}_T }{\Gamma}  \ge \frac12 ( \| D_x \phi^{n+1}_T \|_{2, \Gamma}^2 
	 - \| D_x \phi^n_T  \|_{2 ,\Gamma}^2 ) , 
	 	\label{CHDBC-energy-3-8-T}	 
	 \end{equation}  
Then we arrive at, for the bottom boundary,
	\begin{align}  
& \hspace{-0.25in} \varepsilon\cipgen{\tilde{D}_{y} \phi^{n+1}_{\star, 0}}{\phi^{n+1}_{\star,0} - \phi^n_{\star,0}}{\Gamma}
	\nonumber
	\\
& \ge  \varepsilon^{-1} I(\phi_B^{n+1}) - \varepsilon^{-1}I(\phi^n_B) - \frac{\theta_0}{2} ( \| \phi^{n+1}_B \|_{2, \Gamma}^2 
	 - \| \phi^n_B  \|_{2 ,\Gamma}^2 ) 
	\nonumber
	\\    
& \quad + \frac{\varepsilon}{2} \left( \| D_x \phi^{n+1}_B \|_{2, \Gamma}^2 - \| D_x \phi^n_B  \|_{2 ,\Gamma}^2 \right) + s  \| D_x \mu_B^{n+1} \|_{2, \Gamma}^2 . 	
	 \label{CHDBC-energy-3-9}	
	\end{align}
and, for the top boundary,
	\begin{align}  
& \hspace{-0.25in} - \varepsilon\cipgen{\tilde{D}_{y} \phi^{n+1}_{\star, N}}{\phi^{n+1}_{\star,N} - \phi^n_{\star,N}}{\Gamma}
	\nonumber
	\\
& \ge  \varepsilon^{-1} I(\phi_T^{n+1}) - \varepsilon^{-1}I(\phi^n_T) - \frac{\theta_0}{2} ( \| \phi^{n+1}_T \|_{2, \Gamma}^2 - \| \phi^n_T  \|_{2 ,\Gamma}^2 ) 
	\nonumber
	\\    
& \quad + \frac{\varepsilon}{2} \left( \| D_x \phi^{n+1}_T \|_{2, \Gamma}^2 - \| D_x \phi^n_T  \|_{2 ,\Gamma}^2 \right) + s  \| D_x \mu_T^{n+1} \|_{2, \Gamma}^2 . 	
	 \label{CHDBC-energy-3-10}	
	\end{align}

Finally, substitution of~\eqref{CHDBC-energy-2-2}, \eqref{CHDBC-energy-2-3}, \eqref{CHDBC-energy-3-1}, \eqref{CHDBC-energy-3-2}, \eqref{CHDBC-energy-3-9} and \eqref{CHDBC-energy-3-10} into~\eqref{CHDBC-energy-1} results in~\eqref{CHDBC-energy-0}, so that the total energy stability is proved. This finishes the proof of Theorem~\ref{CHDBC-energy stability}. 
	\end{proof} 
\begin{rem} 
	Many studies have discussed the convergence of convex splitting schemes under classical periodical and homogeneous Neumann boundary conditions. Since the spatial truncation error, the exact solution is inconsistent with the discrete mass conservation. A Fourier projection (including sine or cosine projection) is commonly applied to treat the issue with acceptable artificial error \cite{Chen16,chen19b,Guo24}. For the semi-discrete convergence proposed in \cite{Bao2021b,Meng23}, this problem does not happen. 
    However, the Fourier projection is invalid for the dynamical boundary condition. A suitable alternative to the Fourier projection is needed, which is the difficulty in analyzing the convergence of the proposed scheme. We will consider this issue in our future research.
\end{rem} 

\section{Numerical experiments}  \label{sec:numerical results} 

In this section, some numerical examples will be presented to validate the proposed scheme. A combination of multigrid method and Newton iteration is applied to deal with the nonlinear equations coupled between the interior region and the boundary section. 
A nonlinear full approximation scheme (FAS) multigrid method is applied for solving the semi-implicit numerical scheme~\eqref{scheme-CHDBC-1} -- \eqref{scheme-CHDBC-5}. We present the convergence test and perform some 2-D sample computation results, while a 3-D extension could be similar. The computational domain is taken as $\Omega=(0, 1)^2$, parameters will be specified when mentioned.

\subsection{Convergence test}
We first test the convergence rates of the scheme~\eqref{scheme-CHDBC-1} -- \eqref{scheme-CHDBC-5}. For this numerical experience, we take~$s =0.001h^2$~so that the target convergence rate is of $O(h^2)$. Parameters are fixed as~$\varepsilon =0.02$, $\kappa=0.02$~and~$\theta_0=3$. All the programs stop at the same final time. A smooth initial data is chosen as
\begin{equation}
\phi_0 (x,y) =0.8\cos ( 4\pi x ) \cos ( 4\pi y ) .
\end{equation}
The convergence rate for the physical boundary section is indicated separately to verify the 2nd order spatial accuracy in both the interior area and boundary section. Associated norms in different discrete function spaces and different dimensions, defined in subsection~\ref{subsec:finite difference}, are displayed respectively. Since the exact form of the solution is not available, instead of calculating the absolute error at final time, we compute the Cauchy difference, instead of a direct calculation of the numerical error: 
\begin{equation}
\delta_{\phi}:=\phi_{h_c}-\mathcal{I}_f^c\left(\phi_{h_f}\right), \label{Cauthy difference}
\end{equation}
where~$\mathcal{I}_f^c$~is a projection from coarse mesh to fine mesh. This requires relatively coarse solution, parametrized by $h_c$, and a relatively fine solution, parametrized by~$h_f$, where~$h_c=2h_f$, at the same final time. The detailed results are displayed in the following tables.
\begin{table}[h]
\center
\begin{tabular}{lllll}
\multicolumn{5}{c}{Boundary}                                      \\ \hline
Grid size           & 16-32       & 32-64      & 64-128     & 128-256    \\ \hline
$\ell^2$            & 3.0401E-02  & 7.6736E-03 & 1.9241E-03 & 4.8129E-04 \\
$\ell^2$  rate      &             & 1.9861     & 1.9957     & 1.9992     \\
$\ell^\infty$       & 2.7768E-02  & 6.9293E-03 & 1.7708E-03 & 4.4370E-04 \\
$\ell^\infty$ rate  &             & 2.0027     & 1.9683     & 1.9967     \\ \hline
\end{tabular}
\caption{Convergence rate on the physical boundary}\label{boundary rate}
\end{table}
\begin{table}[h]
\center
\begin{tabular}{lllll}
\multicolumn{5}{c}{The whole domain}                                      \\ \hline
Grid size           & 16-32       & 32-64      & 64-128     & 128-256    \\ \hline
$\ell^2$            & 1.7003E-02  & 4.0737E-03 & 9.9197E-04 & 2.4604E-04 \\
$\ell^2$  rate      &             & 2.1024     & 2.0569     & 2.0113     \\
$\ell^\infty$       & 2.7768E-02  & 7.7649E-03 & 2.1448E-03 & 6.9708E-04 \\
$\ell^\infty$ rate  &             & 1.8384     & 1.8561     & 1.6214     \\ \hline
\end{tabular}
\caption{Convergence rate over the whole domain}\label{total rate}
\end{table}

Table~\ref{boundary rate} and Table~\ref{total rate} present the errors and convergence rates, based on the data at~$T=0.001$~produced by the numerical scheme~\eqref{scheme-CHDBC-1} -- \eqref{scheme-CHDBC-5}. The 2nd order convergence on the physical boundary is indicated in Table~\ref{boundary rate}, where the inner product and the associated norm are defined in the lower dimension. It is observed that all the numerical convergence rates approach to the theoretical value as the spatial mesh is refined.

\subsection{Spinodal decomposition}

In this section, we perform a numerical simulation to verify energy decay and mass conservation.  The parameters are chosen as follows: $\varepsilon=0.02, \theta_0=3, \kappa=1$.  The time step size and space step are taken as $h=1/128, s=10^{-5}$, respectively. 

To simulate the phenomenon of spinodal decomposition, the initial data in the bulk are chosen to satisfy
\begin{equation}
\phi^0_{i,j}=0.2+0.02 \times r_{i,j},\label{rand_initial}
\end{equation}
where $r_{i,j}$ are uniformly distributed random numbers in $[-1,1]$. In Figure \ref{rand_figure} we present several snapshot evolutions of $\phi$ at the selected time instants. The physical boundary is established on the left and right sides of the square domain, while a periodic boundary condition is imposed in the $y$ direction. 

The mixture does not phase separate on the physical boundary because there are no initial fluctuations there, and $\phi$ takes on the uniform value of $0.2$ at the left and right boundaries. In the snapshots, the interface is not perpendicular to the wall, in contrast  to the case where homogeneous Neumann boundary conditions are taken on the physical boundary. A transition layer at the physical boundary forms, with a certain thickness, where a short-range wetting-type interaction can be observed. 

In the left part of Figure \ref{rand_energy}, the evolution of energy is illustrated, which indicates an energy decay in time. We also display the evolution of mass difference of $\phi$, which is defined as $\bar{\phi}^n-\bar{\phi}^0$, with $\bar{\phi}^n$ given by~\eqref{inner_mass}. The mass variation is displayed in the right part of Figure \ref{rand_energy}, and indicates mass conservation of about $1\times 10^{-9}$. 

\newpage
\begin{figure}[ht]
\center\hspace{-5mm}
\subfigure[t=0.0005]{
\includegraphics[scale=0.4]{./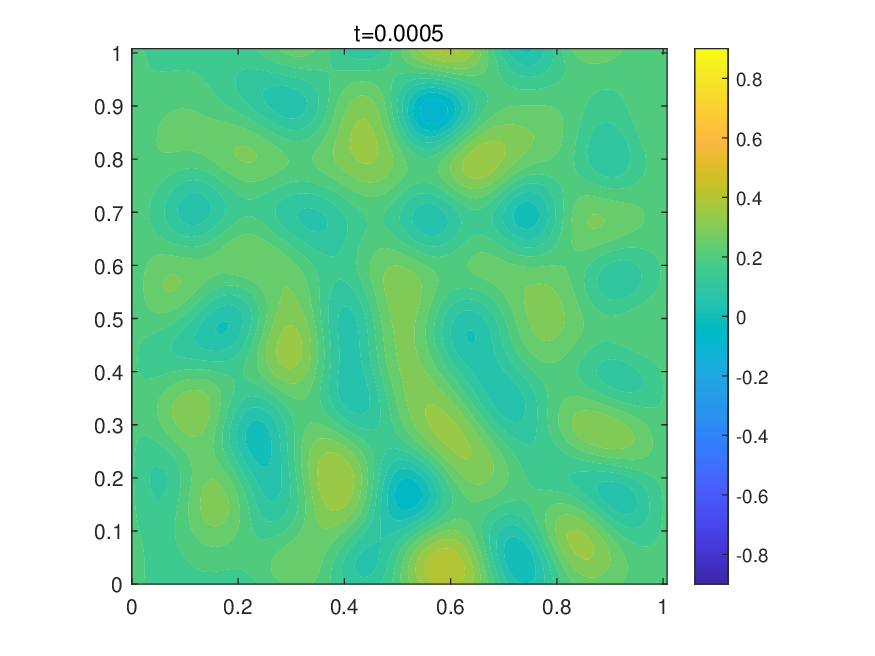}
}\hspace{-10mm}
\subfigure[t=0.0008]{
\includegraphics[scale=0.4]{./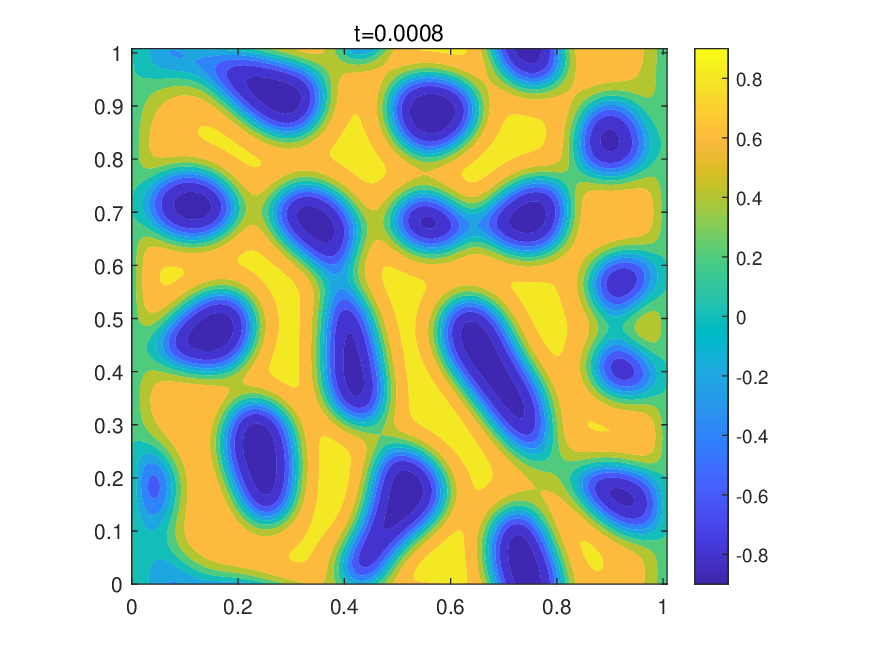}
}\hspace{-10mm}
\subfigure[t=0.001]{
\includegraphics[scale=0.4]{./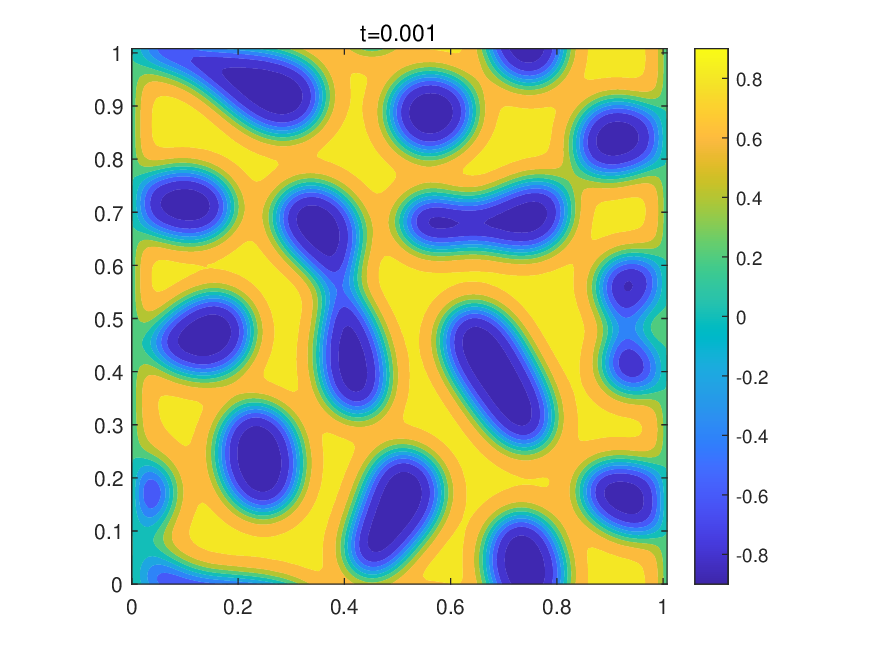}
}

\hspace{-5mm}\subfigure[t=0.002]{
\includegraphics[scale=0.4]{./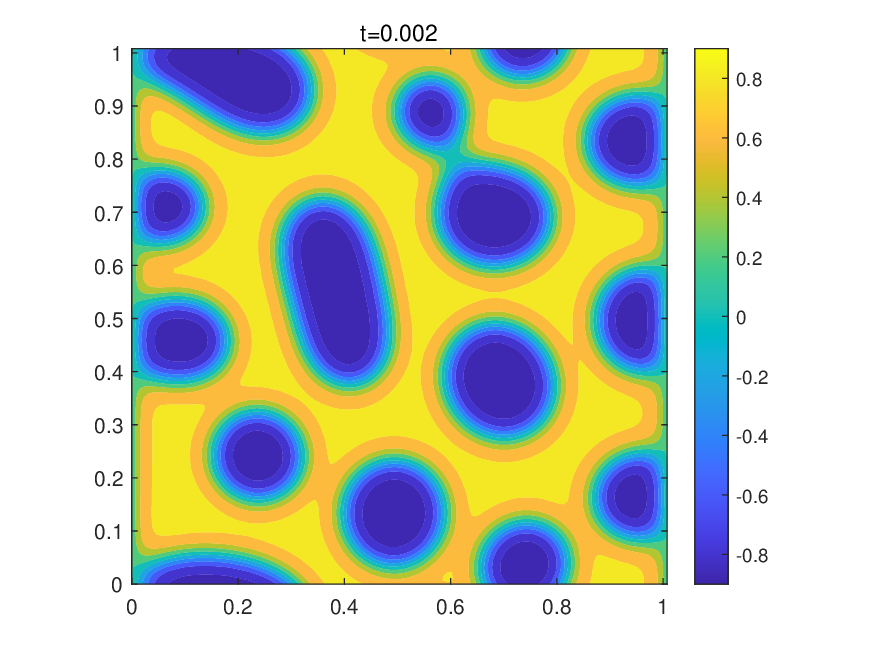}
}\hspace{-10mm}
\subfigure[t=0.005]{
\includegraphics[scale=0.4]{./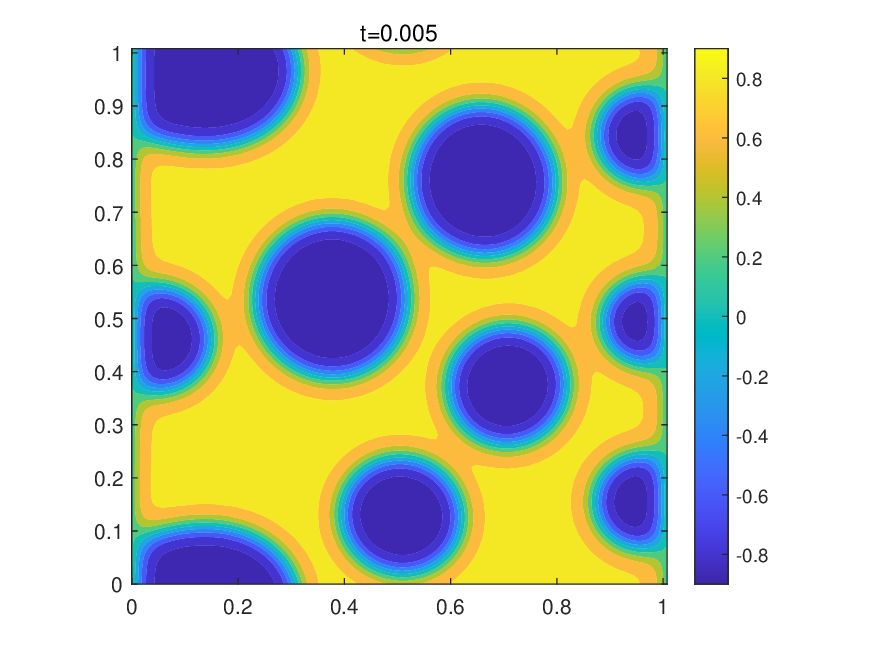}
}\hspace{-10mm}
\subfigure[t=0.01]{
\includegraphics[scale=0.4]{./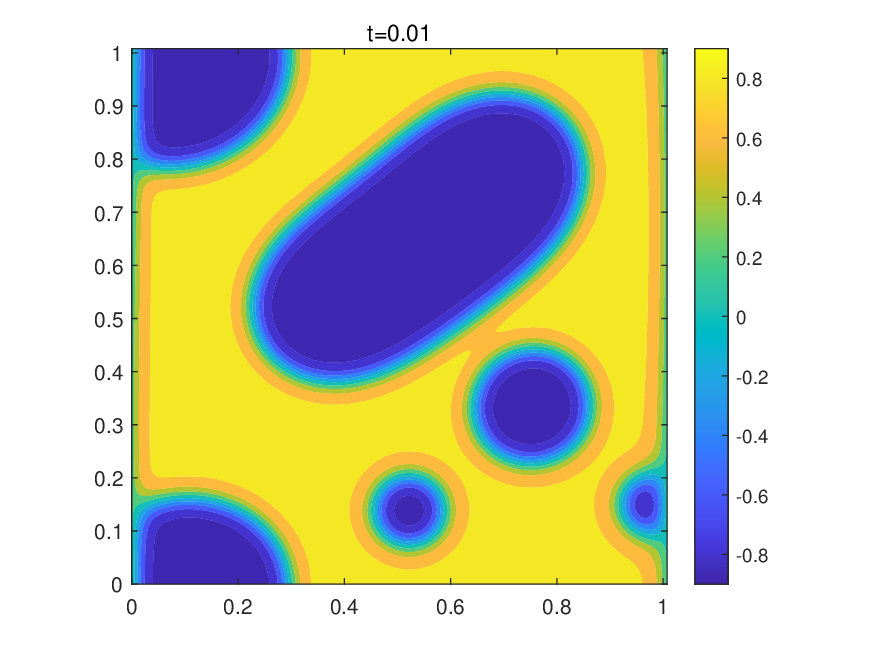}
}

\hspace{-5mm}\subfigure[t=0.02]{
\includegraphics[scale=0.4]{./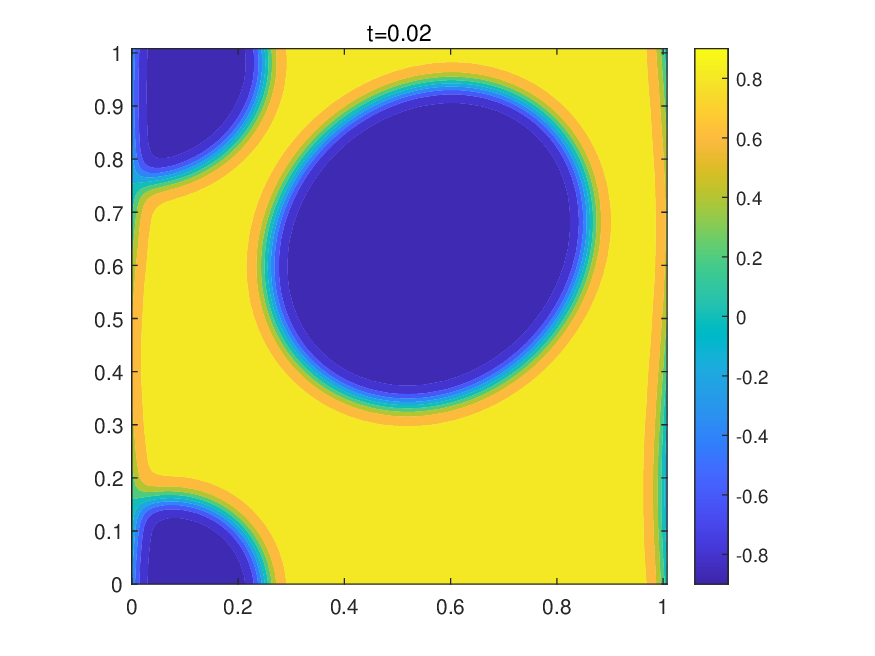}
}\hspace{-10mm}
\subfigure[t=0.05]{
\includegraphics[scale=0.4]{./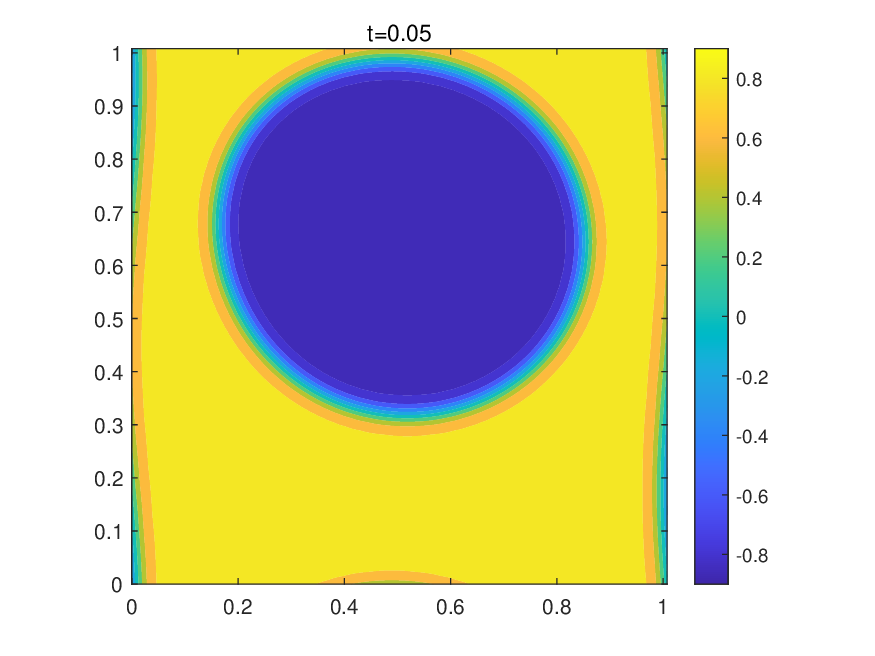}
}\hspace{-10mm}
\subfigure[t=0.1]{
\includegraphics[scale=0.4]{./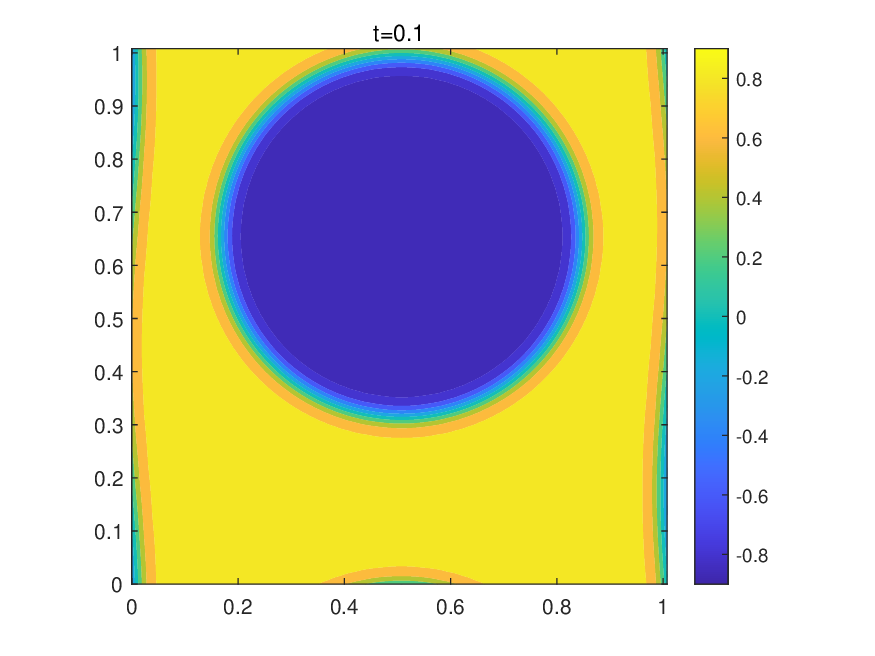}
}
\caption{The phase evolution of $\phi$ at several time instants with initial data~\eqref{rand_initial}.}\label{rand_figure}
\end{figure}
\begin{figure}[!h]
\center
\subfigure[Energy]{
\includegraphics[height=4.5cm,width=6cm]{./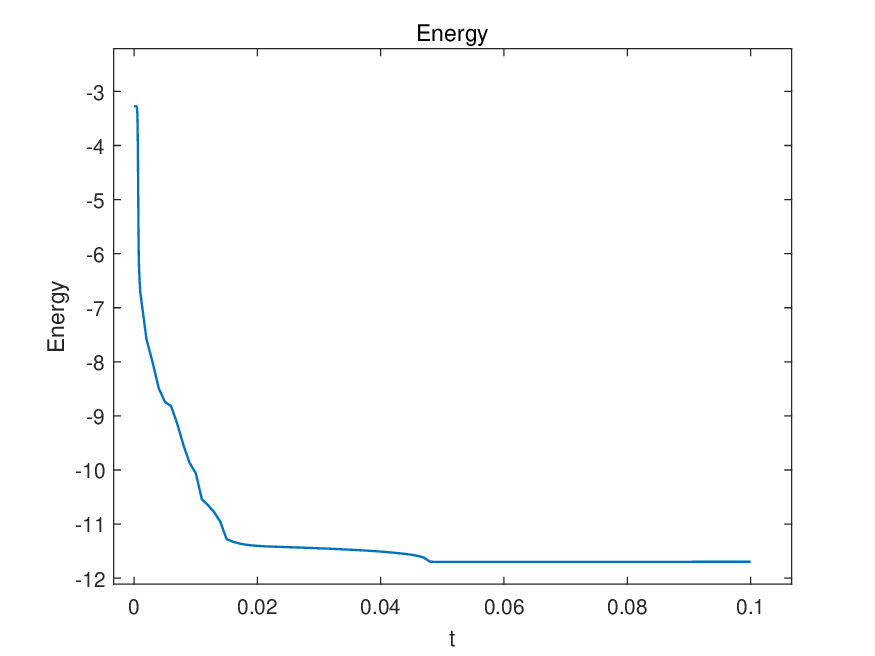}
}
\subfigure[Mass difference]{
\includegraphics[height=4.5cm,width=6cm]{./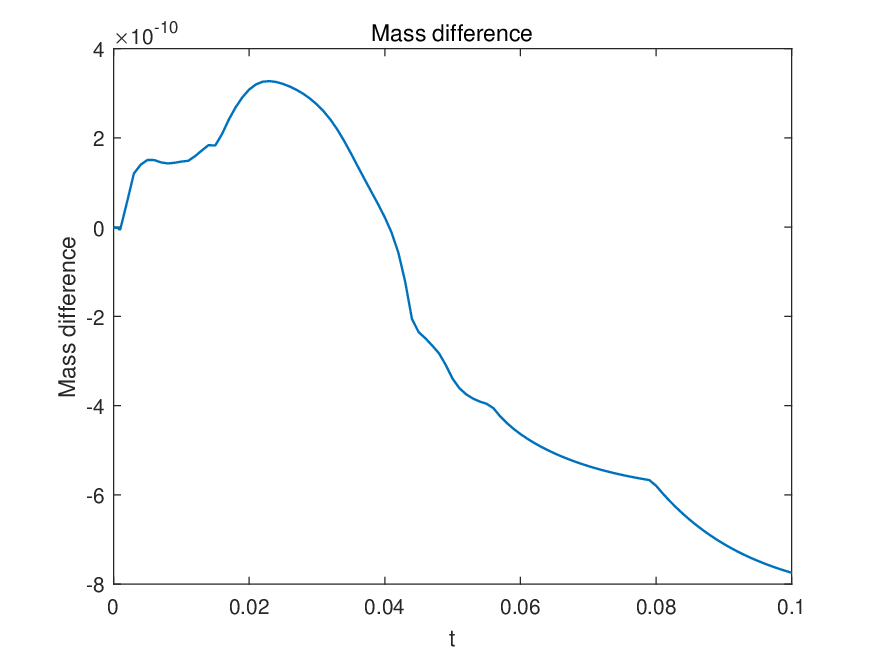}
}
\caption{(a) Left: the time evolution of the energy. (b) Right: the time evolution of the mass difference.} \label{rand_energy}
\end{figure}

\subsection{Droplet evolution}
In this part, we initialize a square droplet with its center at point $(0.5, 0.2)$, and one edge of the droplet occurs at the dynamical boundary (see Figure \ref{fig: droplet}(a)). The phase value inside the droplet is $0.8$ and outside the droplet is $-0.8$, and the parameters are chosen as $\varepsilon=0.01, \theta_0=3, \kappa=0.01$. Figure \ref{fig: droplet} presents the evolution of the droplet. The physical, mass-conserving boundary is now at the bottom of the domain. It is observed that the square droplet gradually evolves into a circle and the interface forms a certain contact angle with the solid wall that deviates from 90 degrees. Recall that a 90-degree angle would form at a physical boundary when homogeneous Neumann boundary conditions are used. The deviation that is observed is due to the fact that mass is separately conserved on the physical boundary, and this prevents the formation of perpendicular contact angle.

\begin{figure}[!h]
\center\hspace{-5mm}
\subfigure[t=0]{
\includegraphics[scale=0.4]{./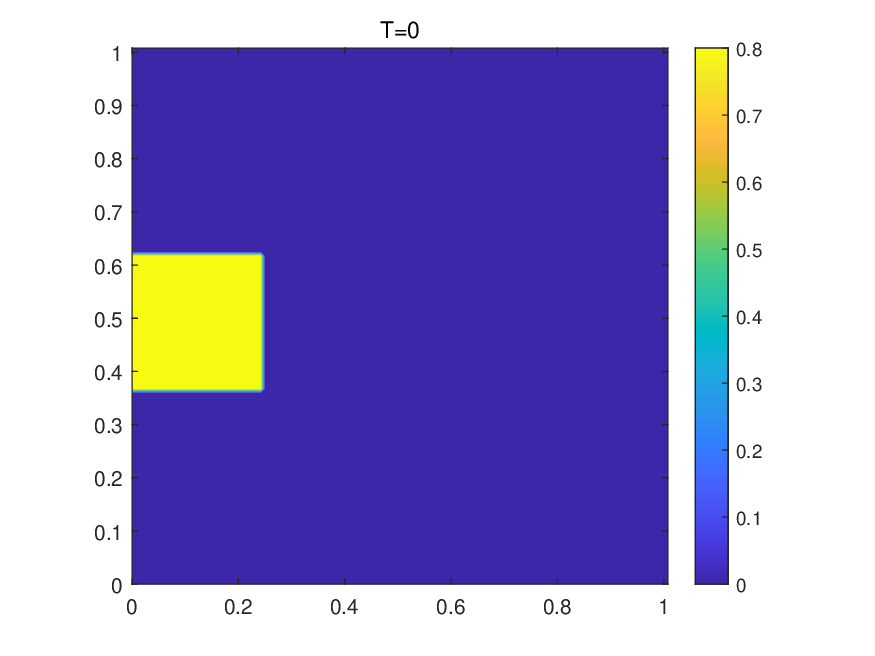}
}\hspace{-10mm}
\subfigure[t=0.001]{
\includegraphics[scale=0.4]{./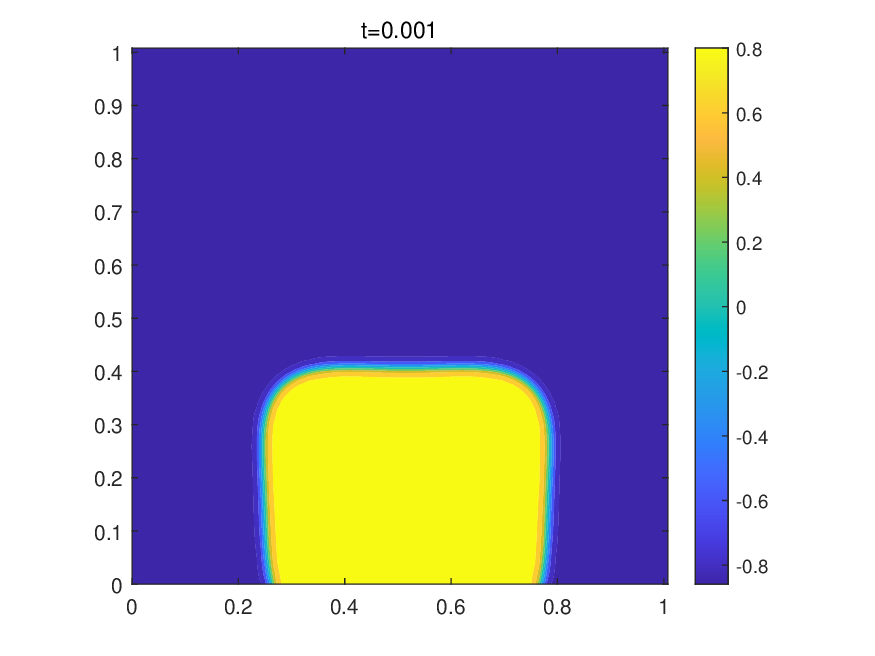}
}\hspace{-10mm}
\subfigure[t=0.003]{
\includegraphics[scale=0.4]{./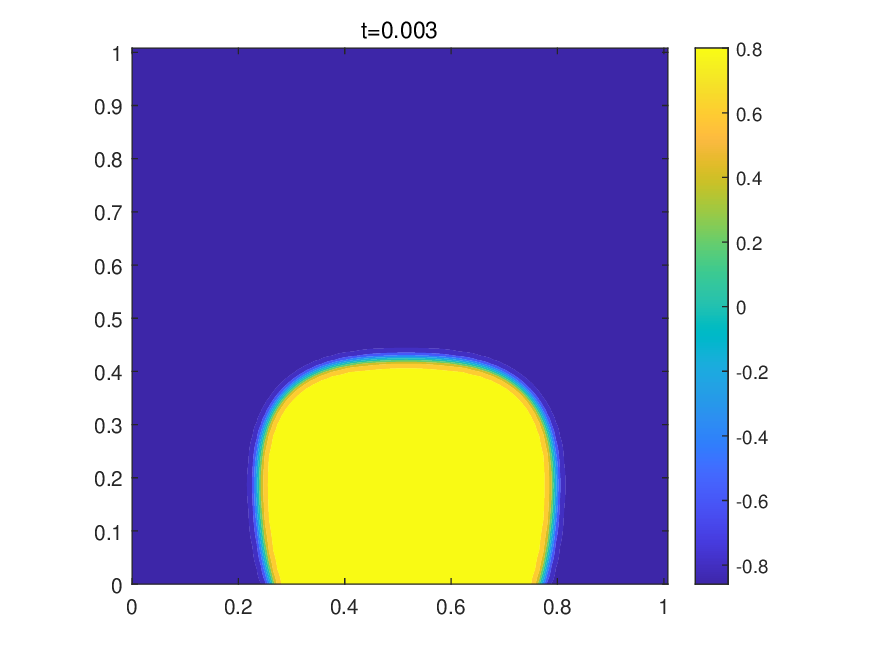}
}

\subfigure[t=0.005]{
\includegraphics[scale=0.4]{./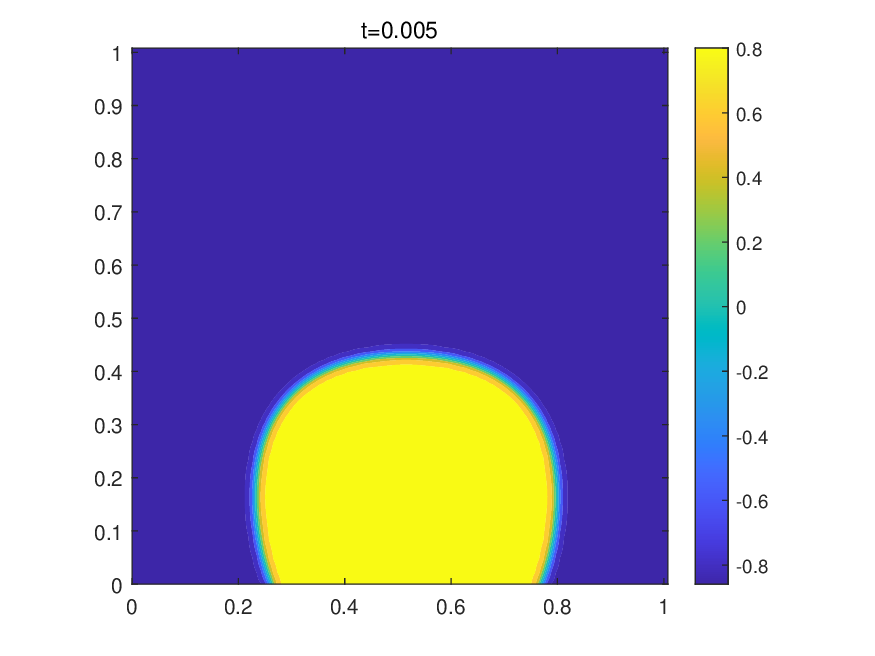}
}
\subfigure[t=0.01]{
\includegraphics[scale=0.4]{./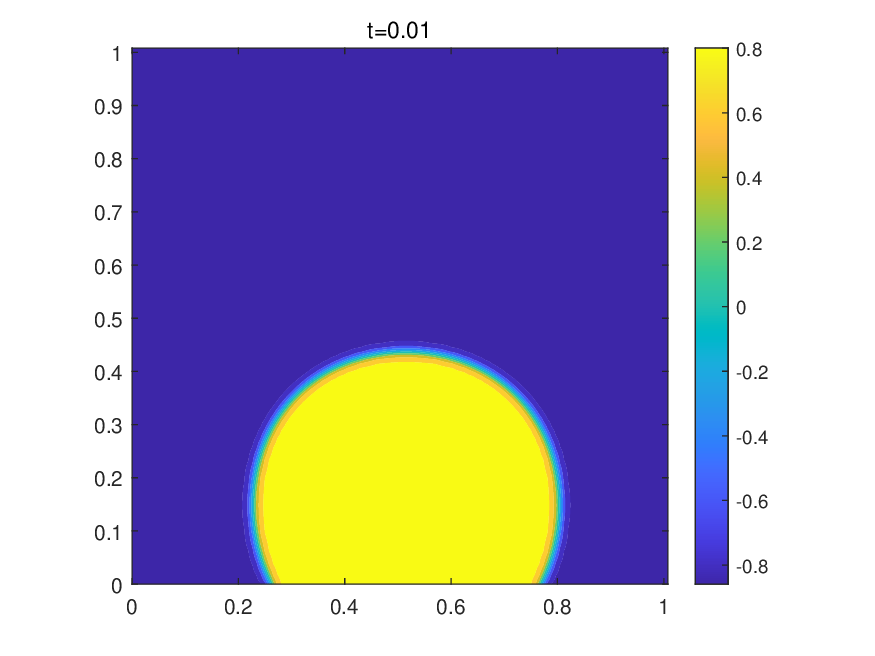}
}
\caption{Snapshots of solution for droplet evolution at selected times.} 
\label{fig: droplet}
\end{figure}

\subsection{Two droplet evolution} \label{Two droplets evolution}
In this part, we numerically simulate the slow fusion process of two circular droplets. As shown in Figure \ref{fig:ini two droplets}, two droplets barely contact with each other and the bottom boundary crosses through both of them. Two numerical simulations, with homogeneous Neumann and dynamical boundary conditions, respectively, are displayed in Figure \ref{fig: two droplets}. In the case of homogeneous Neumann boundary condition, the interface between two kinds of liquid (yellow and blue) quickly becomes perpendicular to the boundary. Without mass conservation on the surface, the yellow part on the boundary merges into one very fast and lengthened slightly. In the case of dynamical boundary condition, the total length of the yellow part basically remains unchanged, due to the mass conservation on the surface, and the coarsening on boundary takes a longer time. The contact angle changes during the evolution process as well.
\begin{figure}[!h]
	\center
	\includegraphics[scale=0.5]{./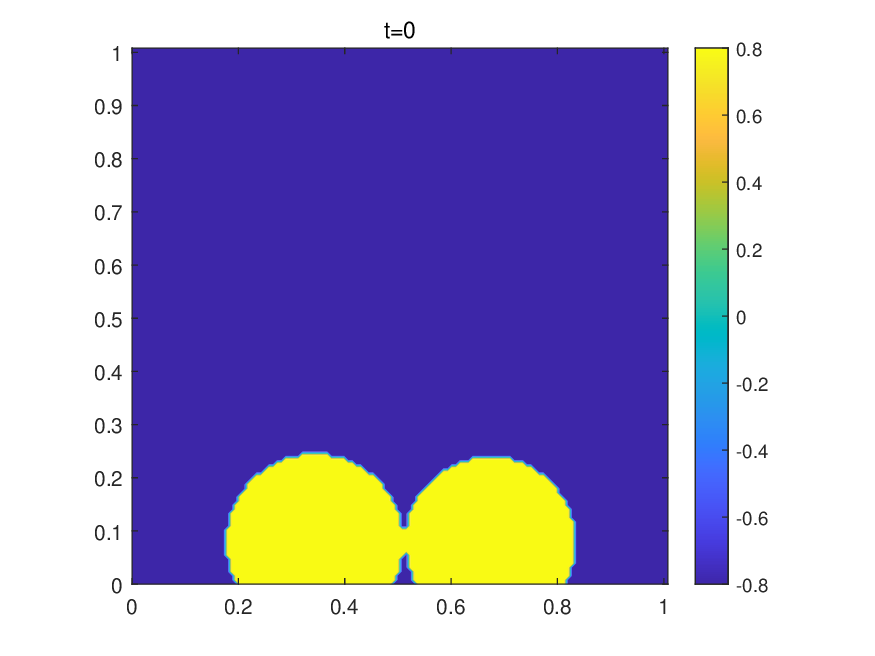}
	\caption{Initial profile of the fusion process of two circular droplets.}  
	\label{fig:ini two droplets}
\end{figure}

\begin{figure}[!h]
	\center
	\subfigure[]{
		\hspace{-10mm}\includegraphics[scale=0.38]{./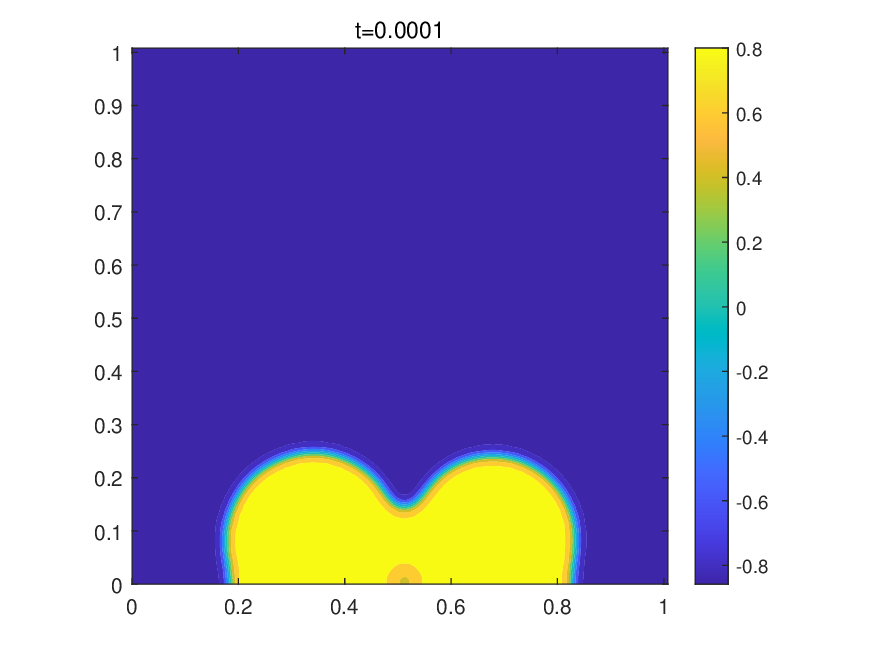}
		\hspace{-14mm}\includegraphics[scale=0.38]{./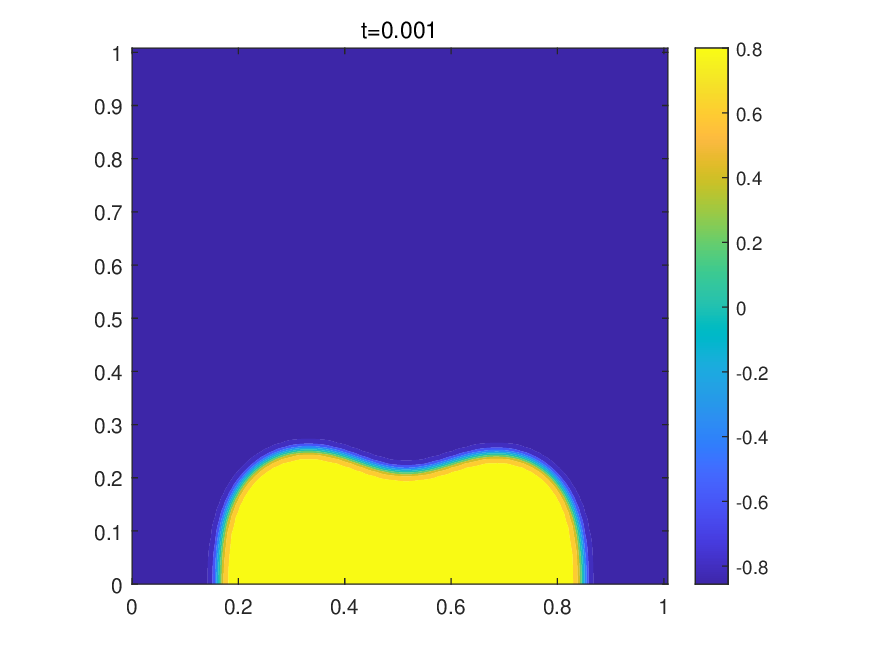}
		\hspace{-14mm}\includegraphics[scale=0.38]{./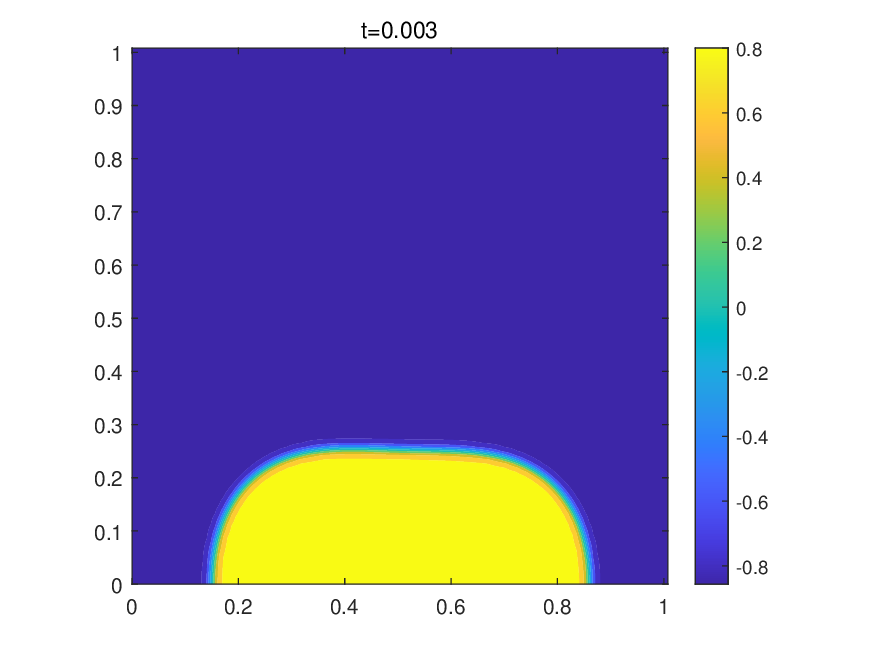}
		\hspace{-14mm}\includegraphics[scale=0.38]{./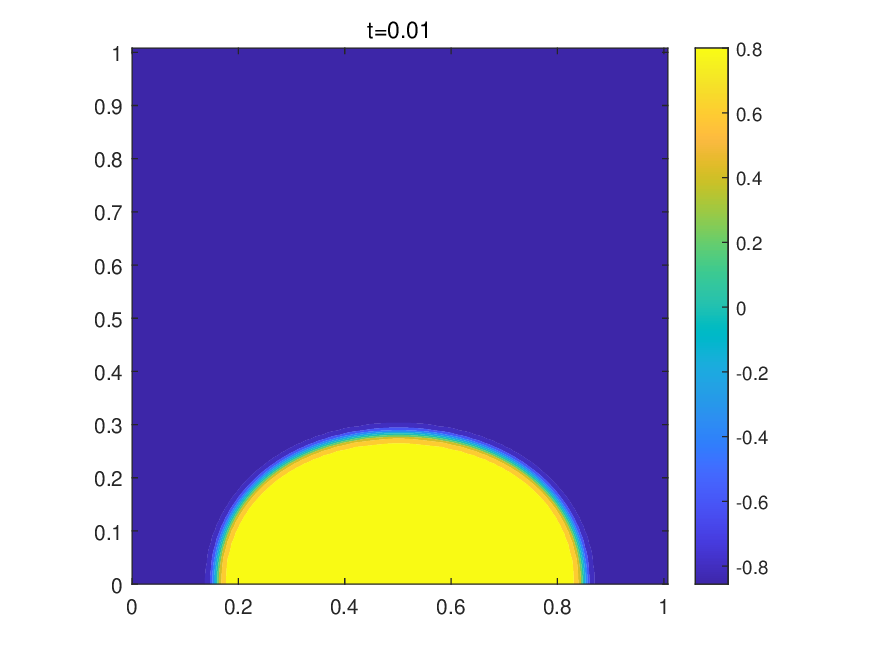}
	}

	\subfigure[]{
		\hspace{-10mm}\includegraphics[scale=0.38]{./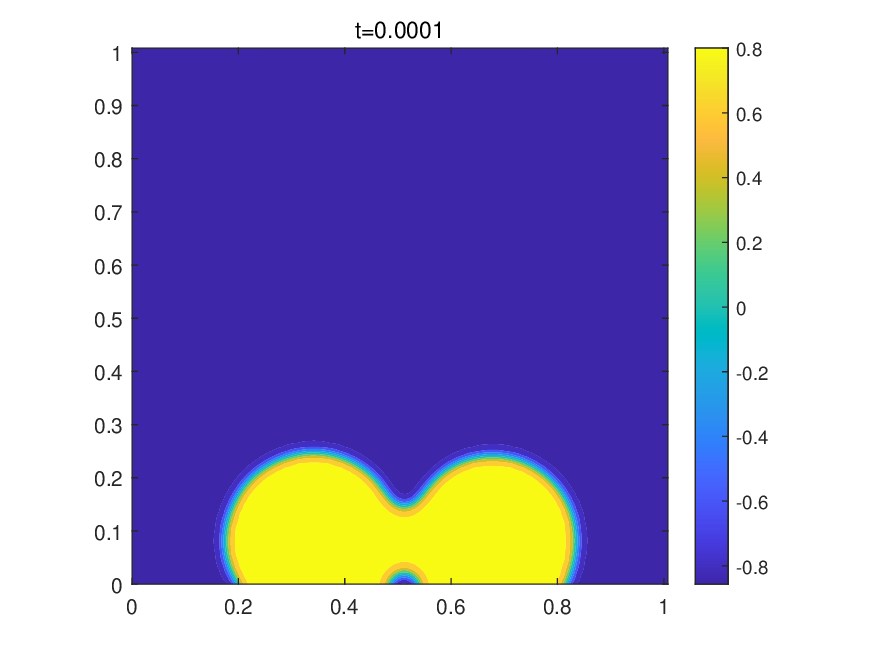}
		\hspace{-14mm}\includegraphics[scale=0.38]{./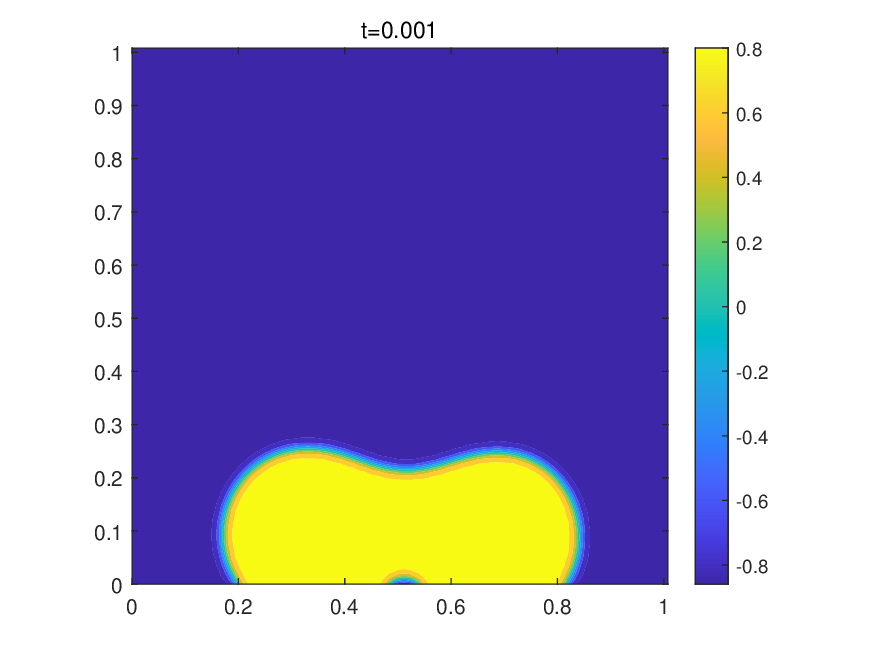}
		\hspace{-14mm}\includegraphics[scale=0.38]{./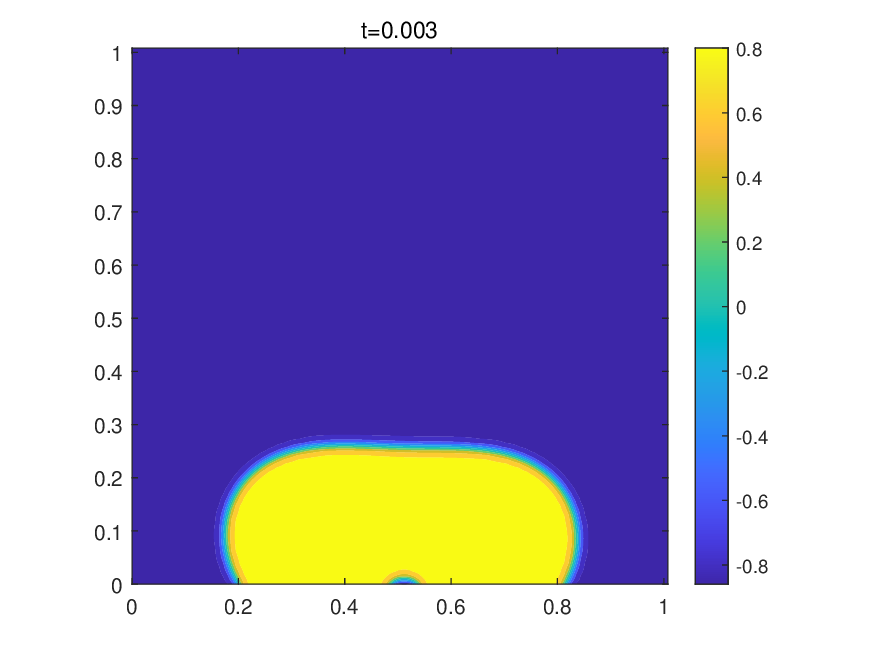}
		\hspace{-14mm}\includegraphics[scale=0.38]{./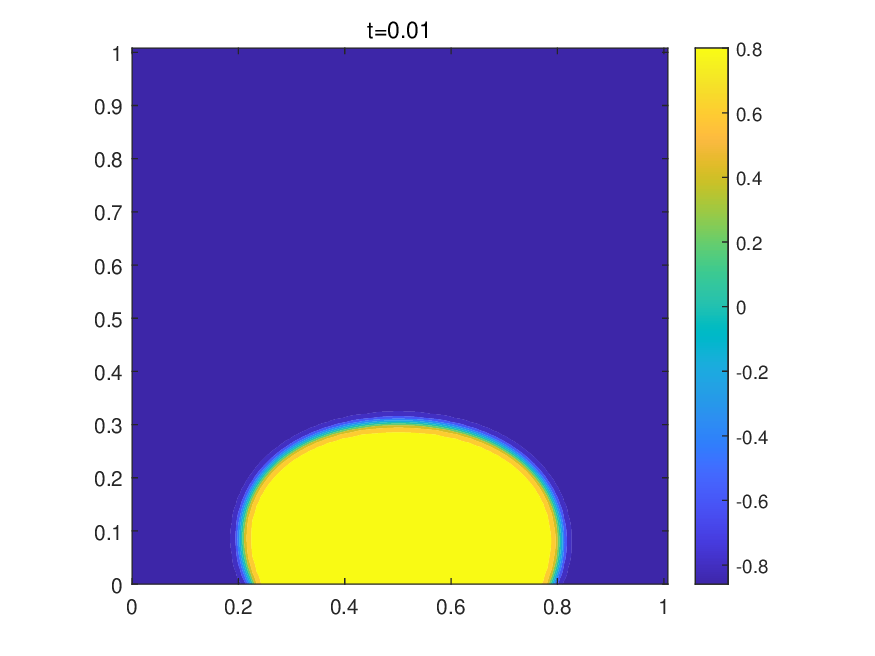}
	}
	\caption{(a): The evolution of droplet fusion process, with the homogeneous Neumann boundary condition, at time instants $t=1\times 10^{-4}$, $1\times 10^{-3}$, $3\times 10^{-3}$, $1\times 10^{-2}$. (b) The evolution of droplet fusion process, with the dynamical boundary condition at the bottom boundary, at the same time instants.}
	\label{fig: two droplets}
\end{figure}

\subsection{Comparison of evolution rate} \label{Comparison of evolution rate}
In section \ref{Two droplets evolution}, it is observed that the dynamic boundary slows down the merging of structures near the solid physical wall. Here we perform a further comparison of evolution rate between homogeneous Neumann and dynamical boundary conditions. The initial profile is presented in Figure \ref{fig:ini comparison} and the simulation results are displayed in Figure \ref{fig: two down}. Once again, the physical boundary is at the bottom.

More obviously than Figure~\ref{fig: two droplets}, the blue part surrounded by yellow gradually disappears and the mass conservation  constraint greatly slows down the evolution process. We have reason to believe that, if the yellow part spans a wider range, the difference in evolutionary speed will be more pronounced. Also, at equilibrium, the contact angle deviates from 90 degrees with the mass-conserving boundary condition.

\begin{figure}[!h]
	\center
	\includegraphics[scale=0.5]{./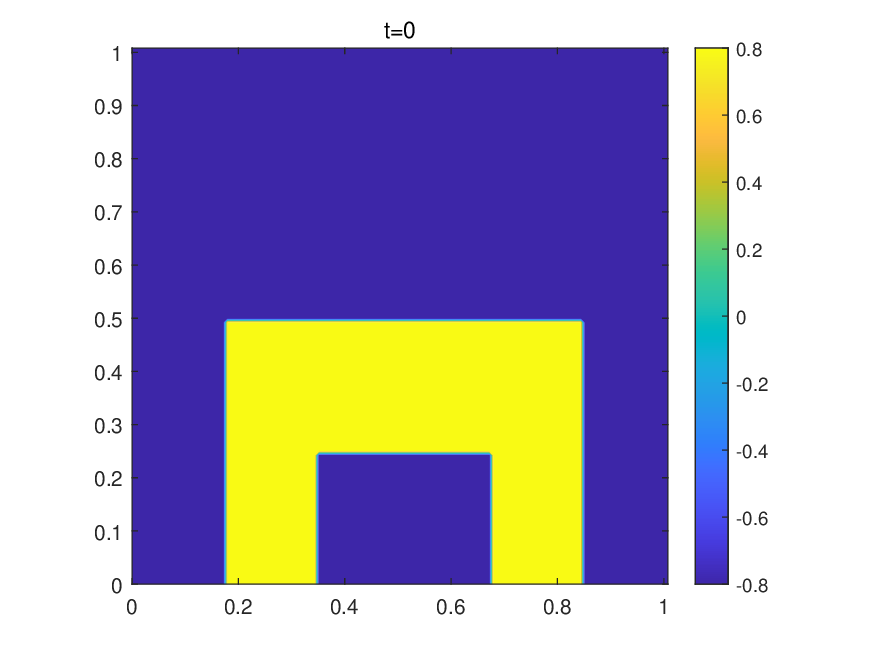}
	\caption{Initial profile of fusion process.}  
	\label{fig:ini comparison}
\end{figure}

\begin{figure}[!h]
	\center
	\subfigure[]{
		\hspace{-15mm}\includegraphics[scale=0.37]{./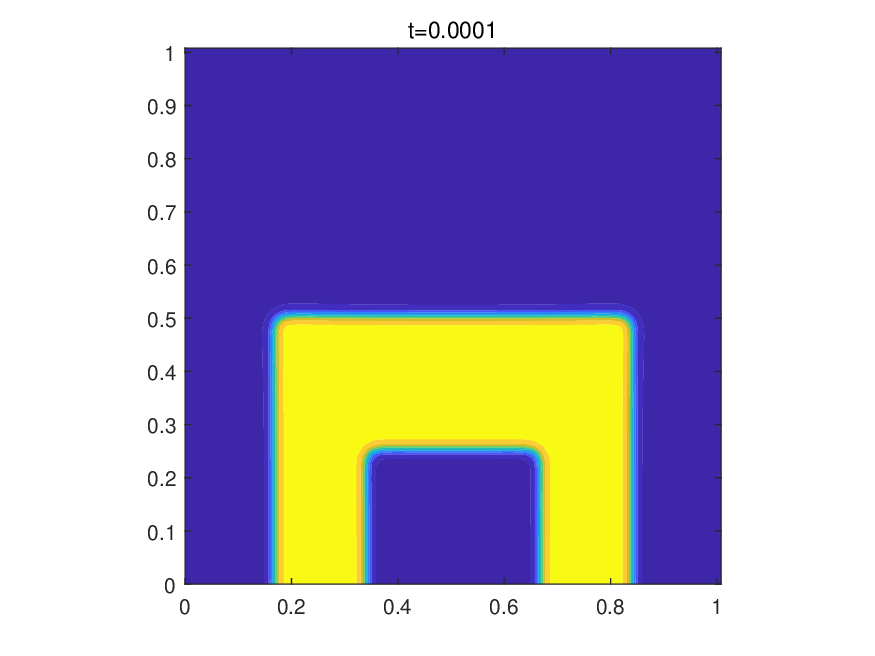}
		\hspace{-11mm}\includegraphics[scale=0.37]{./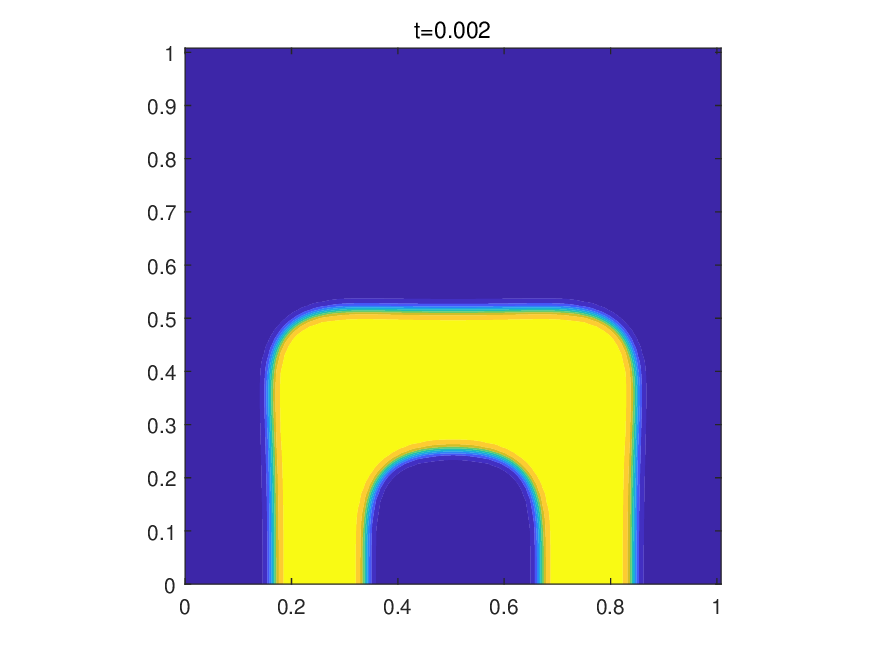}
		\hspace{-11mm}\includegraphics[scale=0.37]{./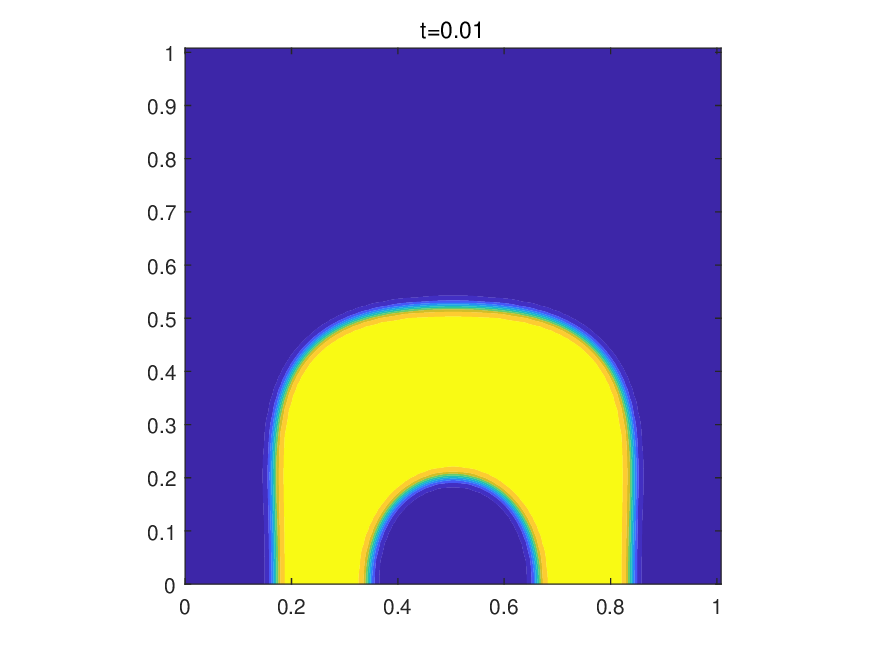}
		\hspace{-9mm}\includegraphics[scale=0.37]{./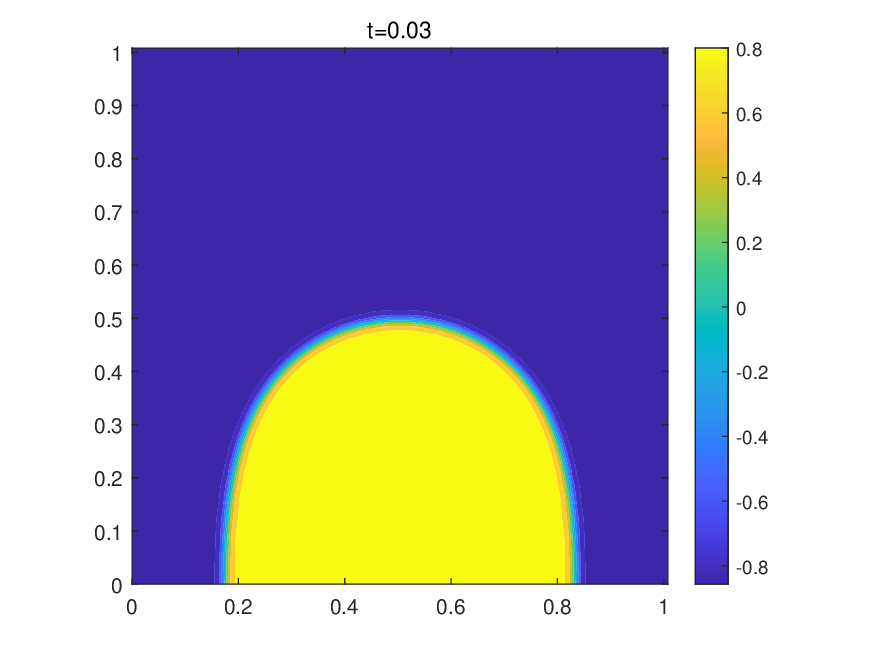}
	}
	
	\subfigure[]{
		\hspace{-15mm}\includegraphics[scale=0.37]{./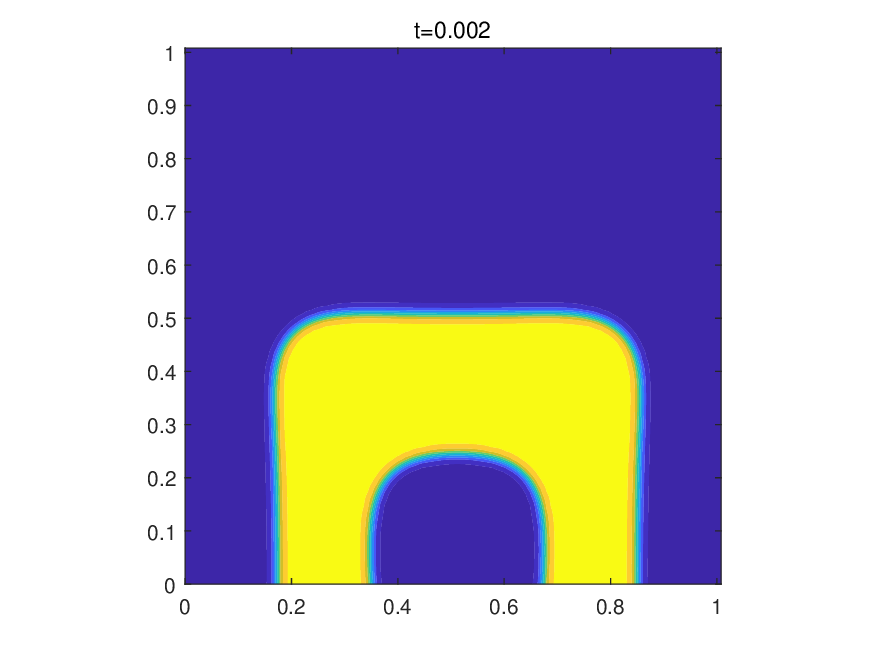}
		\hspace{-11mm}\includegraphics[scale=0.37]{./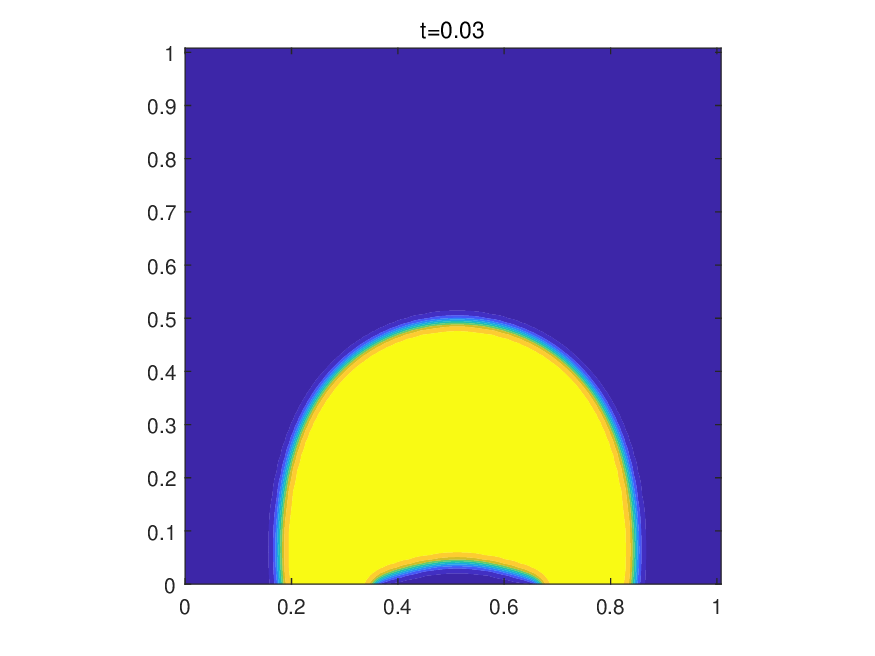}
		\hspace{-11mm}\includegraphics[scale=0.37]{./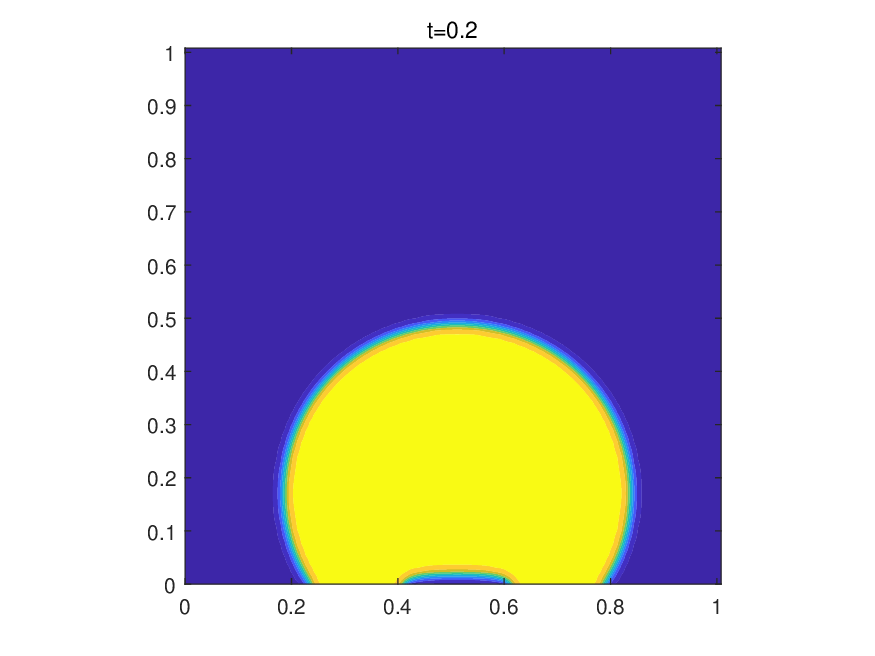}
		\hspace{-9mm}\includegraphics[scale=0.37]{./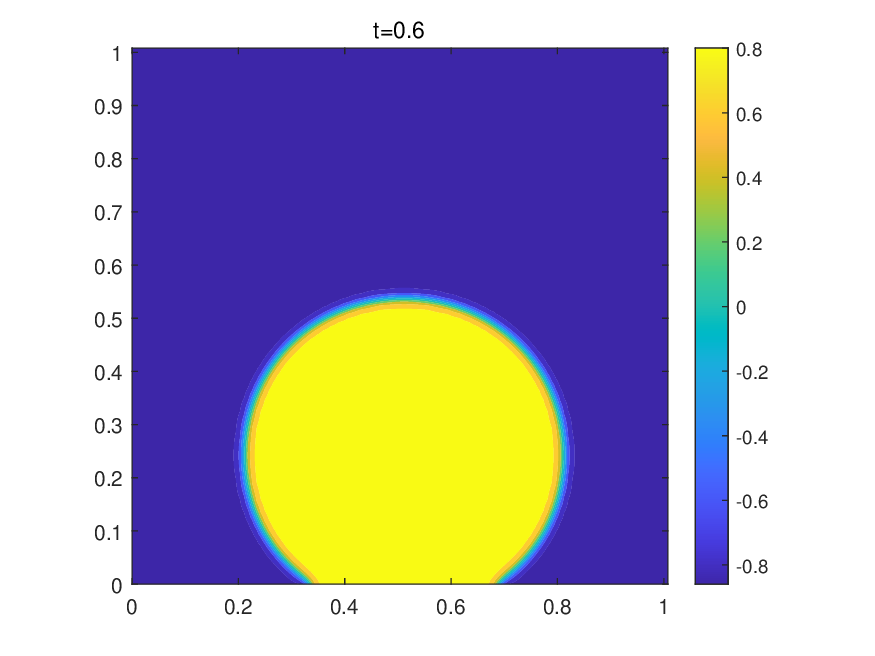}
	}
	\caption{(a): The evolution for fusion process, with the homogeneous Neumann bottom boundary condition, at time instants $t=1\times 10^{-4}$, $2\times 10^{-3}$, $1\times 10^{-2}$, $3\times 10^{-2}$. (b) The evolution for fusion process, with the dynamical bottom boundary condition, at time instants $t=2\times 10^{-3}$, $3\times 10^{-2}$, $2\times 10^{-1}$, $6\times 10^{-1}$.}
	\label{fig: two down}
\end{figure}

\section{Conclusions}  \label{sec:conclusion} 
In this paper, we have presented a fully discrete finite difference numerical scheme of the Cahn-Hilliard equation with dynamic boundary condition and Florry-Huggins energy functional. A logarithmic potential is included in the Flory-Huggins energy over the whole domain, a dynamical evolution equation for the boundary profile corresponds to a lower-dimensional singular energy potential, coupled with a non-homogeneous boundary condition for the phase variable. In the proposed numerical scheme, a convex splitting technique is applied to treat the chemical potential, at both the interior area and on the boundary section. Furthermore, the discrete boundary condition for the phase variable is coupled with the evolutionary equation of the boundary profile. The resulting  numerical system could be represented as a minimization of a discrete numerical energy functional, which contains both the interior and boundary integrals. In particular, an implicit treatment of the logarithmic term ensures the positivity-preserving property, which comes from its singular nature as the phase variable approaches the singular limit values. The total energy stability analysis could be theoretically justified as well. Some numerical experiments have been presented to demonstrate the theoretical properties of the proposed numerical scheme.

\section*{Declarations} 

C.~Wang is partially supported by the NSF DMS-2012269, DMS-2309548, S.M. Wise is partially supported by the NSF DMS-2012634, DMS-2309547, and  Z.R.~Zhang is partially supported by the NSFC No.11871105 and Science Challenge Project No. TZ2018002. 

\noindent
{\bf Conflicts of interest/Competing interests:} \, not applicable 

\noindent 
{\bf Availability of data and material:} \, not applicable 

\noindent
{\bf Code availability:} \, not applicable 

\noindent
{\bf Ethics approval:} \, not applicable 

\noindent
{\bf Consent to participate:} \, not applicable 

\noindent 
{\bf Consent for publication:} \, not applicable

\bibliographystyle{plain}
\bibliography{CHDynamicBC}



\end{document}